\tikzset{ font={\fontsize{9pt}{12}\selectfont}}
\newtheorem{theorem}{Theorem}[section]
\newtheorem{proposition}[theorem]{Proposition}
\newtheorem{lemma}[theorem]{Lemma}
\newtheorem{corollary}[theorem]{Corollary}
\newtheorem*{theoremA}{Theorem A}
\newtheorem*{theoremnn}{Theorem}
\newtheorem*{corollaryn}{Corollary}
\theoremstyle{definition}
\newtheorem{definition}[theorem]{Definition}
\newtheorem{remark}[theorem]{Remark}
\newcommand{\newr}{}
\newcommand{\com}{\mathbb{C}}
\newcommand{\C}{\mathbb{C}}
\newcommand{\R}{\mathbb{R}}
\newcommand{\wcom}{\widehat{\mathbb{C}}}
\newcommand{\dis}{\mathbb{D}}
\newcommand{\cercle}{\mathbb{S}^1}
\newcommand{\re}{\rm{Re}}
\newcommand{\im}{\rm{Im}}
\def\cal {\mathcal}
\title[McMullen-like Julia in a Chebyshev-Halley Family]{Connected McMullen-like Julia sets in a Chebyshev-Halley Family}
\thanks{The researcher J.~Canela was supported by the projects UJI-B2022-46 (Universitat Jaume I) and PID2020-118281GB-C32 (MCIU/AEI/FEDER/UE). The researcher A.~Garijo was  supported by the projects 
PID2020-118281GB-C33 (MCIU/AEI/FEDER/UE) and 2021SGR-633 (Generalitat de Catalunya). The research of P.Roesch was supported by the ANR LabEx CIMI (grant
ANR-11-LABX-0040) within the French State program “Investissements
d’Avenir”.}
\email{canela@uji.es}
\address{ %
Departament de Matem\`atiques\\
 Universitat Jaume I\\ 
 Av. Vicent Sos Baynat, s/n\\
  12071 Castelló de la Plana\\
  Spain.}
\email{antonio.garijo@urv.cat}
\address{ %
  Dept. d'Enginyeria Inform\`atica i Matem\`atiques\\
 Universitat Rovira i Virgili\\
 Av. Pa\"isos Catalans 26\\
 Tarragona 43007\\
 Spain.}
\email{pascale.roesch@math.univ-toulouse.fr}
\address{ %
  Institut de Math\'ematiques de Toulouse\\
 Universit\'e Paul Sabatier\\
  118, route de Narbonne \\
  31062 Toulouse Cedex \\
  France.}
\begin{document}

\begin{abstract}
In this paper we study a one parameter family of   rational maps  obtained by applying the Chebyshev-Halley  root-finding algorithms. 
We show that   the dynamics near  parameters  where the family presents some degeneracy might  be understood from the point of view of singular perturbations. More precisely,    we relate the dynamics of those maps   
with the one of the McMullen family $M_{\lambda}(z)=z^4 + \lambda /z^2$, using quasiconformal surgery. 

{\it Keywords: holomorphic dynamics, singular perturbations, degeneracy parameters, Julia sets, root-finding algorithm.}

{\it MSC2020: 30D05, 37F10, 37F31, 37F44}
\end{abstract}

\maketitle

\section*{Introduction}

The root-finding algorithms are   widely known as iterative dynamical systems. On the chaotic part, the iterative method fails, so it is important to understand the chaotic set and how it varies with the method. 

The   root-finding algorithms applied to polynomials are  rational maps and, in this setting, there is a well defined dichotomy between the tame part, the Fatou set denoted   $\cal F(f)$,  and the chaotic part, the Julia set denoted $\cal J(f)$.  
On the Fatou set there is eventually  some limiting behaviour since   $\cal F(f)$   is  the set of points $z_0$ 
where the family of iterates $ (f^n)_{n \in \mathbb N }$ 
is normal if restricted to  some neighbourhood of $z_0$.

Among the rational cases, the quadratic polynomials---which are also the  simplest rational maps---have been well studied and proved to be  universal. This follows from the work of many authors including Douady, Hubbard \cite{DH1}, Lyubich \cite{Ly3}, and McMullen \cite{McMU}. Roughly speaking, it means that  the Julia sets  of those polynomials  appear in a lot of families : the dynamics can be restricted so as to look like quadratic.  It is much more easy to recognize the Julia set when it is connected, so the Mandelbrot set $M=\{c\in\C\mid \cal J(z^2+c)\hbox{ is connected}\}$ plays a fundamental role in parameter spaces. Its boundary $\partial M$ is the   bifurcation locus  of the quadratic family: the place where the dynamics changes drastically.   One of the first appearances of the universality of the quadratic family was observed by the presence of  ``copies" of the Mandelbrot set  in the family of  Newton's method applied to a  cubic polynomial  \cite{DH1}. 

However, there are rational maps whose Julia set is not homeomorphic to any quadratic Julia set. The family of rational maps $M_{n,d,\lambda}(z)= z^n + \lambda / z^d$ firstly introduced by C.~McMullen \cite{McM1}  is an example of this phenomenon.  Indeed, the Julia set of $ M_{n,d,\lambda}$ could be  a Cantor set of circles surrounding the origin or a Sierpinski carpet, among \newr{other possibilities} \cite{DLU}. These kind of Julia sets is not occurring for polynomials since, in this case, there is no Fatou component whose boundary is the whole  Julia set. The family of maps $M_{n,d,\lambda}$ is referred in the literature  as  {\it McMullen family}.

In this article we show that one can find   copies of the Julia set of maps in the McMullen family as subsets of  the Julia set of a family coming from Chebyshev-Halley root-finding algorithms.

\vskip 1em 

 The  family of {\it  Chebyshev-Halley   root-finding algorithms is} given by the recursive sequence $z_{n+1}=CH^f_\alpha(z_n) $,  where $\alpha$ is a complex parameter and     \begin{equation*}
CH^f_\alpha(z)=z-\left( 1+\frac{1}{2}\;\frac{L_{f}\left(  z\right) }
	{1-\alpha L_{f}\left(  z\right) }\right) \frac{f\left( z\right) }{f^{\prime
		}\left(  z\right) } \quad \quad \hbox{ with } \quad  L_{f}\left( z\right) =\frac{f\left( z\right) f^{\prime \prime }\left(
		z\right) }{\left( f^{\prime }\left( z\right) \right) ^{2}}.\end{equation*}

When this family is applied to the polynomial $f(z)=z^3-1$ and using the  new parameter  $ a=5-4\alpha$, one   gets the rational map 
\begin{equation*}
R_a(z)=\frac{2az^6+(15-a)z^3+3-a}{3z^2(5-a+(1+a)z^3)}.
\end{equation*} 

For $a=0$ the degree drops from $6$ to $5$. The goal of this work is to study this family  around the singular parameter $a=0$.
The main result  is the following. 

 \begin{theoremnn}   There exists a neighbourhood  $\Lambda$ of $0$ such that for  $a\in\Lambda \setminus \{0\}$   the map   $R^2_a$ is McMullen-like :     the map   $R^2_a$ is 	conjugated to a map in the McMullen family in some  annulus $A_a$.
 \end{theoremnn}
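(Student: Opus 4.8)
The plan is to read the statement as a \emph{singular perturbation}: at the degenerate parameter $a=0$ the second iterate $R_0^2$ already hides a $z^4$, and turning on $a$ opens a pole of order $2$ that turns this germ into a genuine McMullen map. I would begin with the local analysis at $0$ and $\infty$. A direct computation shows that $R_a$ has a pole of order $2$ at the origin for every $a$, while at infinity $R_a(z)\sim \tfrac{2a}{3(1+a)}\,z$; hence for $a=0$ the map interchanges $0$ and $\infty$ as a superattracting $2$-cycle (each arrow of local degree $2$), whereas for $a\neq 0$ the point $\infty$ becomes an attracting fixed point and the cycle breaks. Consequently $R_0^2$ fixes $0$ with local degree $4$, and expanding the second iterate near the origin yields
\[
R_a^2(z)=b\,z^{4}+C(a)\,z^{-2}+(\text{higher order}),\qquad b\neq 0,\quad C(a)\to 0\ \text{as}\ a\to 0 .
\]
This is precisely the germ of a perturbation of $z\mapsto z^4$ by a pole, i.e.\ the McMullen form $z^4+\lambda/z^2$ with $\lambda$ comparable to $C(a)$.

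The two terms $b z^4$ and $C(a)z^{-2}$ balance on the scale $|z|\asymp|C(a)|^{1/6}$, which is where the interesting dynamics concentrates and why the annulus $A_a$ must shrink as $a\to0$. I would let $T_a$ be the component of the basin of the attracting fixed point $\infty$ of $R_a$ that contains the pole $0$ (the trap door), and take $A_a$ to be the annulus bounded inside by $\partial T_a$ and outside by a suitable equipotential-type curve of the (perturbed) immediate basin surviving from $a=0$. The heart of the argument is to show that $R_a^2\colon A_a\to\widehat A_a$ is a proper holomorphic map of degree $6$ onto a strictly larger annulus, in which the pole contributes degree $2$ along the inner boundary and the $z^4$-behaviour contributes degree $4$ along the outer boundary, the six free critical points lying on a single combinatorial circle between them; in other words $R_a^2|_{A_a}$ is a \emph{McMullen-like map} carrying the combinatorics of $M_\lambda$.

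It then remains to straighten this map by quasiconformal surgery. Fixing $\lambda(a)$ with $\lambda(a)\asymp C(a)$, I would glue $R_a^2$ on $A_a$ to a genuine McMullen map $M_{\lambda(a)}$ across a quasiconformal collar, spread the standard complex structure along the resulting orbit to obtain an invariant Beltrami coefficient, and integrate it via the Measurable Riemann Mapping Theorem to produce a quasiconformal $\phi_a$. The integrated map is holomorphic, of the same degree and with the same critical configuration as $M_{\lambda(a)}$, so after normalisation it \emph{is} $M_{\lambda(a)}$; the restriction of $\phi_a$ to $A_a$ is then the desired conjugacy $\phi_a\circ R_a^2=M_{\lambda(a)}\circ\phi_a$.

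I expect the crux to be the middle step: constructing $A_a$ and its bounding curves so that $R_a^2$ is genuinely proper of degree $6$ with the exact McMullen combinatorics, \emph{uniformly} as $a\to0$. Since the annulus collapses at scale $a^{1/6}$, one must control the higher-order terms of the expansion in the rescaled picture, locate the trap door $T_a$ and the six critical points precisely, and verify that they assemble into the single-critical-circle configuration of the McMullen family (and, for the connectedness in the title, that $\lambda(a)$ falls in the region where $\mathcal J(M_{\lambda(a)})$ is connected). Once the correct rescaled limit is identified, the surgery of the previous paragraph is comparatively routine.
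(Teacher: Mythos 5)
Your overall strategy (exhibit $R_a^2$ as a proper degree-$6$ map of an annulus with McMullen combinatorics, then straighten by cut-and-paste surgery and the Measurable Riemann Mapping Theorem) is the paper's strategy in outline, but two of your concrete steps fail. First, the fixed point at infinity is \emph{repelling} for small $a\neq 0$, not attracting: from \eqref{eq:Ra} one gets $R_a(z)\sim \frac{2a}{3(1+a)}z$ near $\infty$, so the multiplier of the fixed point $\infty$ is $3(1+a)/(2a)$, whose modulus blows up as $a\to 0$. Hence there is no ``basin of the attracting fixed point $\infty$'' and no trap door $T_a$ in your sense, so neither boundary of your annulus $A_a$ is defined. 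The paper instead builds the annulus from global combinatorial data that survives the perturbation: the fixed dynamical rays of angles $0$ and $1/2$ in the basins of the three roots of unity, cut by equipotentials, give a quasicircle $\gamma_0$ whose successive preimages $\gamma_1,\dots,\gamma_4$ (together with the extra degree-one preimage $\gamma_2'(a)$ created for $a\neq0$ by the degree jump from $5$ to $6$) move holomorphically on a neighbourhood $\Lambda$ of $0$ and bound the annulus on which $R_a^2$ is proper of degree $6$ (Propositions~\ref{prop:confpert} and~\ref{prop:rationallikeconf}). Your rescaled expansion $R_a^2(z)=bz^4+C(a)z^{-2}+\cdots$ is a reasonable heuristic for why the picture is McMullen-like at scale $|a|^{1/6}$, but you never construct the bounding curves, and you yourself flag the uniform control of the error terms in the collapsing scale as the missing crux.

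Second, the identification of the straightened map with a member of the family $z^4+\lambda/z^2$ is not automatic, and it cannot be arranged by gluing to a prescribed $M_{\lambda(a)}$ with $\lambda(a)\asymp C(a)$: the parameter is an output of the surgery, not an input, so that step as written is circular. After integrating the invariant Beltrami coefficient you only know you have a degree-$6$ rational map with $\infty$ superattracting of local degree $4$ and $0$ a double pole; a priori it has the form $(z^6+bz^3+\lambda)/z^2$. The paper kills the $bz^3$ term with a rigidity statement (Proposition~\ref{prop:rigidity}) that rests on two facts you never mention and that must be carried through every stage of the surgery: invariance of all the curves and of the interpolating quasiconformal maps under rotation by third roots of unity, and the fact that the six simple critical points in the annulus are mapped onto exactly three distinct critical values (Remark~\ref{rem:critR2}). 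Without these, ``same degree and same critical configuration, so after normalisation it is $M_{\lambda(a)}$'' is an unproved claim rather than a conclusion.
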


 Theorem A in the  \S\ref{sec:MCM} is  a more detailed statement  of this result (see also Theorem~\ref{thm:A}).  
 Moreover, in  \S \ref{sec:further_results} we prove that 
\begin{corollaryn}  For parameters $a\in\Lambda \setminus \{0\}$     the Julia set   $\mathcal J(R_a)$  contains the image by some homeomorphism of the Julia set of a map  in the  McMullen family. Moreover, the three different types of escaping  Julia sets of the  McMullen family appear in $\Lambda$.
 \end{corollaryn}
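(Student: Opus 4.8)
The plan is to deduce both assertions from the conjugacy provided by Theorem~A. Write $\phi_a$ for the (quasiconformal) homeomorphism that conjugates $R_a^2$ on the annulus $A_a$ to the McMullen map $M_{\lambda(a)}$ on the corresponding annular region, and let $a\mapsto\lambda(a)$ be the parameter correspondence produced by the surgery. The first assertion then reduces to the fact that a conjugacy carries Julia sets to Julia sets, while the second requires locating the image of $a\mapsto\lambda(a)$ inside the parameter plane of the McMullen family.

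For the embedded copy, recall that a quasiconformal conjugacy respects the dichotomy between the Fatou and Julia sets and therefore sends Julia set to Julia set. Thus $\phi_a$ restricts to a homeomorphism from $\mathcal{J}(M_{\lambda(a)})$ onto a compact subset of $\mathcal{J}(R_a^2)$, provided $\mathcal{J}(M_{\lambda(a)})$ is contained in the region on which $\phi_a$ is defined. For each of the three escape types the Julia set of $M_{\lambda(a)}$ is a compact subset of $\mathbb{C}^{*}$ lying in precisely the annular region carrying the nontrivial dynamics modeled by the surgery, so the containment holds; in the Sierpi\'nski carpet case I would take care of the boundary circles $\partial B_{\lambda(a)}$ and $\partial T_{\lambda(a)}$ (which belong to the Julia set) by arranging, as in Theorem~A, that $\phi_a$ is defined on a neighbourhood of the closed annulus. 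Finally $\mathcal{J}(R_a^2)=\mathcal{J}(R_a)$, since the Julia set is invariant under passing to iterates, and we conclude that $\phi_a^{-1}(\mathcal{J}(M_{\lambda(a)}))$ is the desired homeomorphic copy of a McMullen Julia set inside $\mathcal{J}(R_a)$.

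For the three escape types I would invoke the escape trichotomy of Devaney--Look--Uminsky \cite{DLU} for $M_\lambda(z)=z^4+\lambda/z^2$: according to whether the free critical value $v_\lambda$ lies in the immediate basin $B_\lambda$ of $\infty$, in the trap door $T_\lambda$ around $0$, or in a deeper preimage of $B_\lambda$, the Julia set is a Cantor set, a Cantor set of circles, or a Sierpi\'nski carpet. In the $\lambda$-plane these loci are the central McMullen domain $\mathcal{M}$ (Cantor sets of circles), which contains a punctured neighbourhood of $0$; the Sierpi\'nski holes (carpets), which are bounded components lying outside $\mathcal{M}$; and the escape region (Cantor sets), reached for large $|\lambda|$. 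Since $a\mapsto\lambda(a)$ is holomorphic, non-constant and fixes $0$, its image $W=\lambda(\Lambda\setminus\{0\})$ is open and $W\cup\{0\}$ is a neighbourhood of $0$; hence $W$ already meets $\mathcal{M}$ and realizes the Cantor-set-of-circles case (this is also the case occurring for $a$ close to $0$). It then remains to show that $W$ pokes out of $\mathcal{M}$ and meets both a Sierpi\'nski hole and the escape region, which I would obtain from the asymptotics of $\lambda(a)$ coming from the surgery, showing that $W$ contains a punctured disk $\{0<|\lambda|<\rho\}$ whose radius exceeds the outer extent of $\overline{\mathcal{M}}$ so that $W$ crosses $\partial\mathcal{M}$ into the Sierpi\'nski holes and on into the Cantor locus.

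The main obstacle is exactly this last quantitative step: controlling $W$ precisely enough to guarantee that it leaves $\mathcal{M}$ and captures both remaining escape types within the \emph{same} neighbourhood $\Lambda$ on which the conjugacy of Theorem~A holds. Concretely this amounts to producing a sharp estimate, or a leading-order expansion, for $\lambda(a)$ and comparing the size of the resulting disk with known bounds on $\mathcal{M}$ and on the nearest Sierpi\'nski holes and escape parameters. A more robust alternative that avoids explicit constants is to argue by continuity and connectedness directly in the $a$-plane: the three escape loci pull back under the conjugacy to disjoint relatively open subsets of $\Lambda\setminus\{0\}$, and I would exhibit one parameter in each by tracking the free critical orbit of $R_a^2$ and detecting, via an intermediate-value or winding argument along a path in $\Lambda$, the transitions of $v_{\lambda(a)}$ between $T_{\lambda(a)}$, the deeper preimages, and $B_{\lambda(a)}$.
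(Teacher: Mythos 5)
The first assertion is handled essentially as in the paper: Theorem~A gives $\varphi_a^{-1}(\mathcal{J}(M_{\lambda(a)}))\subset A_a$, the conjugacy on $A_a$ transports non-normality, and $\mathcal{J}(R_a^2)=\mathcal{J}(R_a)$. No issue there.

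Your main route for the second assertion has a genuine gap. You argue that $W=\lambda(\Lambda\setminus\{0\})$ is open and that $W\cup\{0\}$ is a neighbourhood of $0$ because ``$a\mapsto\lambda(a)$ is holomorphic, non-constant and fixes $0$.'' None of this is available: the surgery in Theorem~\ref{thm:A} produces $\lambda(a)$ only as \emph{a map} on $\Lambda$ (the statement says exactly that), and the ingredients --- the curves $\beta_i^{in}(a),\beta_j^{out}(a)$, the boundary lifts $\psi_{2,out},\psi_{2,in}$, the interpolations of \cite[Prop.~2.30]{BF}, and the normalized integrating map --- are not shown to depend even continuously on $a$, let alone holomorphically. (Note also that $\lambda$ depends on the arbitrary equipotential level $l_0$, as the paper remarks.) Even granting continuity, your decisive step --- that the image contains a punctured disk whose radius exceeds the outer extent of $\overline{\mathcal{M}}$ so that it sweeps through the Sierpi\'nski holes and into the Cantor locus --- is precisely the hard quantitative content, and you do not supply the needed expansion of $\lambda(a)$ or the comparison with the McMullen domain. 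As stated, the argument does not close.

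Your fallback sketch (track the free critical orbit of $R_a^2$ directly in the $a$-plane and use an intermediate-value argument) is in fact the paper's route, and it is worth seeing how it avoids the $\lambda$-plane entirely. For the Cantor set of circles, the paper proves two explicit estimates (Lemmas~\ref{lem:iterateanell} and \ref{lem:iterate0}): the free critical points lie in $\mathbb{A}(|a|^{-1/3},3|a|^{-1/3})$, their images have modulus $O(|a|^{2/3})$, and one more application of $R_a^2$ sends them past $\gamma_0(a)$; hence for $|a|$ small the critical orbit of $M_{\lambda(a)}$ escapes in exactly two iterates. For the Sierpi\'nski carpet, the paper restricts to $a\in(-1,0)$, uses that $R_a|_{\mathbb{R}^+}$ is monotone decreasing, that $v_{a,0}\in\mathbb{R}^+$ moves from $0$ to $+\infty$ as $a$ goes from $0$ to $-1$, and that the real period-two point $q_0(a)$ persists (no parabolic cycle can live in $\mathbb{R}^+$ by the threefold symmetry); an intermediate-value argument on $(a_q,0)$ then yields $a^*$ with $R^2_{a^*}(v_{a^*,0})=0$, i.e.\ escape in exactly three iterates. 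For the Cantor set, $\Lambda$ is chosen to reach parameters where the critical values lie on $\gamma_4(a)$, i.e.\ in $\overline{A^1}$ in the notation of Proposition~\ref{prop:cantpoints}. If you pursue your alternative, you still need these concrete real-axis computations (or equivalents); the ``continuity and connectedness'' framing alone does not produce the three parameters.
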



\vskip 1em
The paper  goes as follows. 
In \S  \ref{sec:CH} we give a short introduction \newr{to} the dynamics of the Chebyshev-Halley family. In \S \ref{sec:MCM} we \newr{state} the properties of the McMullen family, present Theorem A in \S \ref{sec:presentthma}, and list the properties to guarantee that a rational map of degree 6 is a McMullen map in \S\ref{sec:rigidity}.
In \S \ref{sec:R0} and \S \ref{sec:Dynamics:Ra} we discuss in detail the  dynamical properties of the rational maps $R_0$ and $R_a$, respectively. Then, \S \ref{sec:surgery} is mainly devoted to prove Theorem A, which is the content of Theorem~\ref{thm:A}. 
The proof is based on a cut and paste quasiconformal surgery procedure (see \cite{BF}) relating the dynamics of $R_a^2$ with the one of $M_{\lambda}$.  Finally,  in \S \ref{sec:further_results} we prove that the three types of Julia sets described in the Escape Trichotomy Theorem of \cite{DLU} can be found as  subset of  the Julia set of $R_a$ for different values of $a$. Furthermore, we remark that the same ideas work for the Chebyshev-Halley method applied to the polynomial $z^n-1$ with $n > 3$.

\section{Chebyshev-Halley family}\label{sec:CH}
   The  family of Chebyshev-Halley   root-finding algorithms is  given by the recursive sequence $z_{n+1}=CH^f_\alpha(z_n) $,
\begin{equation*}
	z_{n+1} =z_{n}-\left( 1+\frac{1}{2}\;\frac{L_{f}\left(  z_{n}\right) }
	{1-\alpha L_{f}\left(  \newr{z_{n}}\right) }\right) \frac{f\left(  z_{n}\right) }{f^{\prime
		}\left(  z_{n}\right) } \quad \quad \hbox{  with } \quad  L_{f}\left( z\right) =\frac{f\left( z\right) f^{\prime \prime }\left(
	z\right) }{\left( f^{\prime }\left( z\right) \right) ^{2}}\label{metodo}
\end{equation*}
 
\noindent and $\alpha\in\C$. \newr{This family  of root-finding algorithms was already studied  in \cite{Werner} (see also \cite{traub1964}); from the point of view of complex dynamics  it also appears in several works (see for instance \cite{ CTV, CCV2, Par})}.  In contrast to Newton's method, which has quadratic convergence for simple roots, these algorithms have cubic convergence, i.e.\ every simple root of a polynomial  is a super-attracting fixed point of local degree 3 of  $CH^f_\alpha$. 
 This family contains some well known root-finding algorithms. For example,  $\alpha=0$ corresponds to Chebyshev's method, $\alpha=1/2$ corresponds to Halley's method, and as $\alpha$ tends to infinity the family converges to Newton's method. 

In this paper, we   focus on the Chebyshev-Halley method applied to the polynomial $f(z)=z^3-1$.  It has  the expression

$$CH^f_{\alpha}(z)	=\frac{2-4\alpha+(-10-4\alpha)z^3+(-10+8\alpha )z^{6}}{6z^{2}(-2\alpha+(2\alpha -3)z^{3})}.$$ 
It can be simplified by considering the parameter $a=5-4\alpha$.  We obtain then  the following one parameter  family of rational maps defined on the Riemann sphere $\hat{\C}$ \newr{that we call}  $R_a$:

\begin{equation}\label{eq:Ra}
R_a(z)=\frac{2az^6+(15-a)z^3+3-a}{3z^2(5-a+(1+a)z^3)}  ,\quad a\in\C.
\end{equation}

\vskip 1em   For a parameter $a \notin \{0,3\}$, the rational map $R_a(z)$ exhibits $3$ free critical points. Let  $\zeta=e^{2\pi i/3}$. Given a choice of a punctual determination of a cubic  root,  the critical points  are
\begin{equation}\label{eq:crit}
	c_{a,j}=\zeta^j\sqrt[3]{\frac{15-8a+a^2}{a(a+1)}}, \;\;\;  \mbox{ where } j=0,1,2,
\end{equation}
and the critical values $v_{a,j}:=R_a(c_{a,j})$ are
\begin{equation}\label{eq:critvalue}
	v_{a,j}=\zeta^j\frac{(25-6a+a^2)}{(a-5)^2(a+1)}\sqrt[3]{a^2\frac{15-8a+a^2}{a+1}}.
\end{equation}

Some basic properties of $R_a$ are related to its symmetry.  It is straightforward to check that $R_a(\xi z) = \xi R_a(z)$ for any $\xi \in\mathbb U$, where  $\mathbb U=\{\xi\in\com \mid \xi^3=1 \}$ is the group of  third roots of the unity generated by  $\zeta=e^{2\pi i/3}$.  As a consequence, this symmetry provides a conjugacy in the dynamical plane. Note  that the orbits of the 3 free critical points are symmetric with respect to multiplication by a third root of the unity. We can conclude that the $a-$plane is the natural parameter plane of the family $R_a$. 

For the map  $R_a$,   the elements of $\mathbb U$ are  {\it super-attracting} fixed points  with local degree $3$  {(since the Chebyshev-Halley methods have order of convergence 3)}.  Hence,  to every $\xi \in \mathbb{U}$   is associated its basin of attraction
\begin{equation}\label{eq:basin}
	A_a(\xi)= \{ z \in \C \, | \, R_a^n(z) \to \xi  \, \hbox{ as } \, n \to \infty\}
\end{equation}
\noindent and its {\it immediate basin of attraction } $A_a^*(\xi)$   defined as \newr{the} connected component of $A_a(\xi)$ containing $\xi$. 

\vskip 1em The rational map $R_a$ has degree $ 6$, except for $a=0$ and $a=3$.  
The parameter $a=3$ corresponds to Halley's method, which is relatively simple to study since there are no critical points other than the super-attracting fixed points which correspond to the roots of the polynomial  (see  \cite{CCV2}).

\noindent At the parameter $a=0$ a singular perturbation happens. Indeed, if $a=0$ the dynamics at $\infty $ change drastically:  the map is  

\begin{equation}\label{eq:R0}
R_0(z)=\frac{15z^3+3}{3z^2(5+z^3)}
\end{equation} and has the points $\{0,\infty\}$  as a period two super-attracting cycle whereas for  $|a|\neq 0$  small enough   $\infty$ is a repelling fixed point.  
More precisely, the  point $0$ is in both cases  critical and is mapped with degree $2$ to $\infty$, but for $a =0$ the point $z=\infty$ is   sent  back with degree $2$ to $0$ (see Figure~\ref{fig:dynampert} (left)), while for  $a\neq0$ the point   $\infty$ becomes a fixed point of multiplier  $3(1+a)/2a$. Hence,  infinity is a repelling fixed point when  $a$ is close enough to $0$, $a\neq 0$.
\vskip 1em 
In summary, the dynamics of the map $R_0$ is completely understood. The map $R_0$ has degree $5$ and thus has $8$ critical points, which are the following.
 \begin{itemize}
 	\item \newr{The 3 solutions of the equation  $z^3-1=0$. They correspond to 3 superattracting fixed points of $R_0$ with local degree 3 and thus these 3 solutions are critical points of multiplicity 2 (\newr{so} 6 critical points counting multiplicity).}
 	\item \newr{The points $z=0$ and $z=\infty$ are simple critical points, which form the superattracting 2-cycle $\{0,\infty\}$.}
 \end{itemize}
 \newr{The case $a=3$ is also well understood: the Fatou set of $R_3$ consists of the basin of attraction of the third roots of the unity and there are no critical points other than the roots themselves.} 
 
 \newr{On the other hand, the map $R_a$, $a\notin\{0, 3\}$, has degree $6$ and thus has $10$ critical points, which are the following.}
 \begin{itemize}
 	\item \newr{As in the case of $R_0$, the 3 solutions of the equation  $z^3-1=0$ are critical points of multiplicity 2.}
 	\item \newr{The point $z=0$ is a simple critical point that is mapped onto $z=\infty$, which is now a fixed point.}
 	\item \newr{The 3 critical points $c_{a,j}$, $j=0,1,2$ (see \eqref{eq:crit}). These critical points converge to $z=\infty$ as $a$ tends to 0.}
 \end{itemize}
 Moreover, in the case of $R_a$,  $a\notin\{0, 3\}$, the dynamics of the 3 new critical points $c_{a,j}$, $j=0,1,2$, is tied by the symmetry of the family and, hence, the parameter space of $R_a$ is a one dimensional space parametrized by  $a\in \C$.  See Figures \ref{fig:param} and  \ref{fig:curveLambda}.

\newr{Furthermore, we can notice  similarities between  the  dynamical planes  of $R_0$ and part of that of $R_a$, namely on the closure of the whole basins of attraction under $R_0$ of the roots $\zeta^k$, $k=0,1,2$, denoted by $\overline {A_0(\zeta^k)}$ (see Figure  \ref{fig:dynampert}).}
This follows from the fact  $\overline {A_0(\zeta^k)}$ is compact (\newr{$A_0(\zeta^k)$ are in the complement of the basin at $\infty$})  and also that the map $R_a$ converges uniformly on compact sets of $\mathbb C$ to $R_0$ as $a$ tends to $0$.  Indeed,   it follows from the  expression  \eqref{eq:Ra} 
 of  $R_a$ that can be rewritten as
  
 $$
  R_a(z)= \frac{15 z^3 + 3 + a (z^3-1)(2z^3+1)}{ 3z^2(z^3+5) + a\cdot 3z^2(z^3-1)}.    $$
\vskip 1em

\begin{figure}[hbt!]
	\centering
	\subfigure{
		\includegraphics[width=150pt]{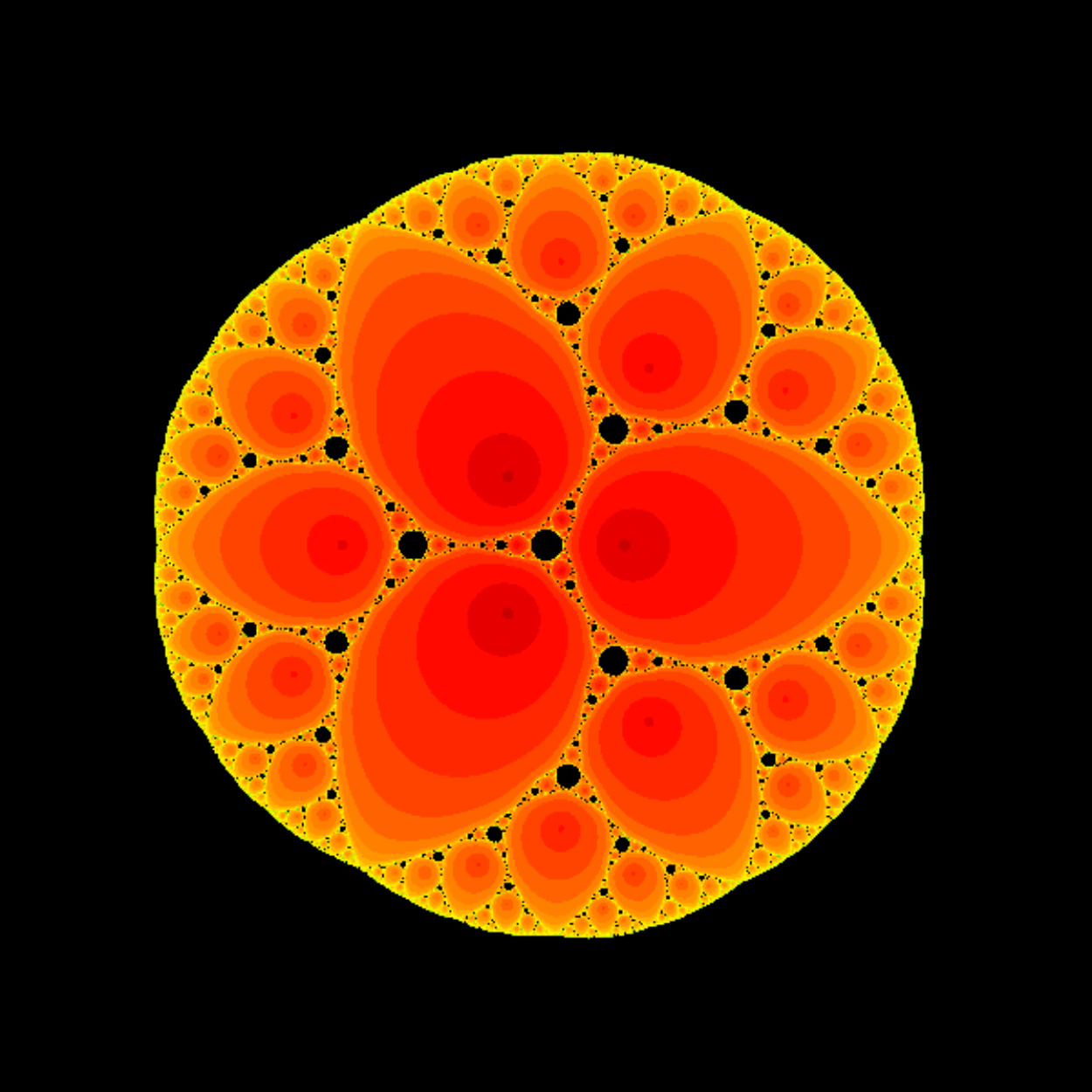}}
	\put(-65,73){{\footnotesize $\cdot 1$}}
	\put(-84.5,83){ {\tiny $\cdot$}} 
	\put(-89,86){ {\tiny $e^{\frac{2 \pi i}{3}}$}} 
	\put(-84.5,64.2){ {\tiny $\cdot$}} 
	\put(-95,55){ {\tiny $ e^{\frac{4 \pi i}{3}}$}} 
	\hspace{0.1in}
	\subfigure{
		\includegraphics[width=150pt]{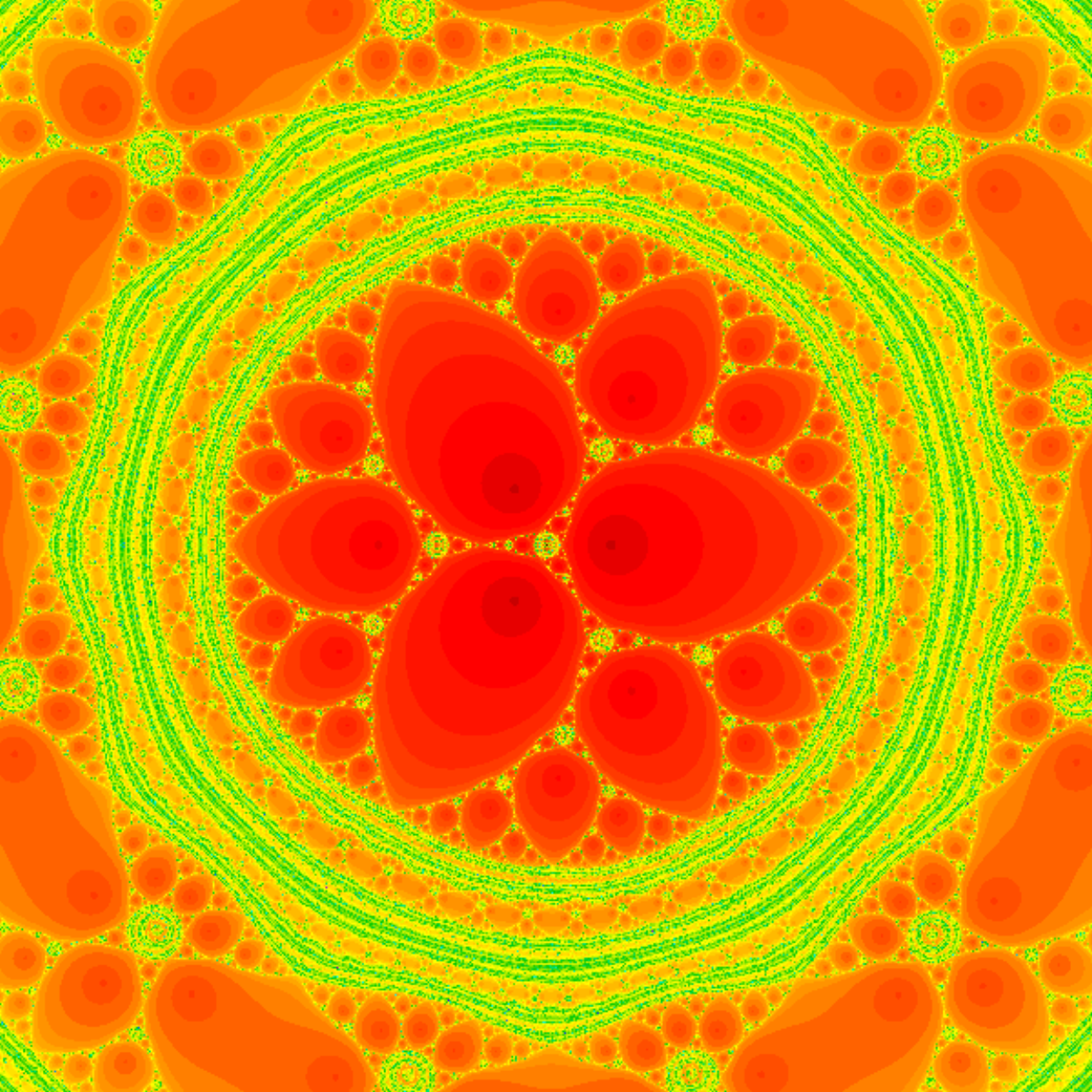}}
	\put(-70.5,73){ {\footnotesize $\cdot 1$}}
	\put(-84,81.2){ {\tiny $\cdot$}} 
	\put(-89,84){ {\tiny $e^{\frac{2 \pi i}{3}}$}} 
	\put(-84,66){ {\tiny $\cdot$}} 
	\put(-95,58){ {\tiny $ e^{\frac{4 \pi i}{3}}$}} 
	
	\caption{\small{Dynamical planes of $R_a(z)$ for $a=0$ (left) and $a=0.0001$ (right). In both dynamical planes red points represent points converging towards a third root of unity. For $a=0$ (left) black points represent points converging to the  super-attracting cycle $\{0,\infty\}$. We mark the third roots of unity $1, e^{2 \pi i/3}, e^{4 \pi i/3}$. } \label{fig:dynampert}}   
\end{figure}

 In fact we can construct a holomorphic motion  of $\overline{A_0(\zeta^k)}$ (see Lemma \ref{lemma:motionbasins} and Figure \ref{fig:dynampert}). Since the concept of holomorphic motion \newr{will appear at several places} in this paper, we recall it in the next definition.
\begin{definition}\label{def:holomotion} A holomorphic motion of a set $X\subset \widehat \C$ parameterized by a domain $\Lambda\subset \C$  and based at $\lambda_0\in\Lambda$ is a map $H:\Lambda\times X\to\widehat \C$ such that 
\begin{itemize}
\item $H(\lambda_0,z)=z \quad \forall z\in X$
\item $z\mapsto H(\lambda, z) $ is injective  for each $\lambda \in\Lambda$
\item the map $\lambda\mapsto H(\lambda ,z)$ is holomorphic for each $z\in X$. 
\end{itemize}
\end{definition}




  \section{McMullen family and main result}\label{sec:MCM}
As mentioned before, \newr{the} singular perturbations \newr{$M_{n,d,\lambda}(z)= z^n + \lambda / z^d$} where introduced by C.~McMullen~\cite{McM1}  in order to prove the existence of buried Julia components, i.e.\ connected components of the Julia set which do not intersect the boundary of any Fatou component. More specifically, he provided the first example of rational map whose Julia set  is a Cantor set of quasicircles. Afterwards, R.~Devaney, D.~Look, and D.~Uminsky \cite{DLU} provided the following classification for the Julia set of McMullen maps when the orbit of all critical points  tend to $\infty$ (see Figure~\ref{fig:dyn_plane}).

\begin{theoremnn}[Escape Trichotomy, \cite{DLU}]
	Assume that all critical points of $M_{n,d,\lambda}$ belong to $A_{M_{n,d,\lambda}}(\infty)$. Then, exactly one of the following occurs.
	\begin{itemize}
		\item All critical points of $M_{n,d,\lambda}$ belong to $A_{M_{n,d,\lambda}}^*(\infty)$. Then, the Julia set $M_{n,d,\lambda}$ is a Cantor set of points.
		\item All critical points of $M_{n,d,\lambda}$ are mapped in exactly two iterates into  $A_{M_{n,d,\lambda}}^*(\infty)$. Then, the Julia set $\mathcal{J}(M_{n,d,\lambda})$ is a Cantor set of circles.
		\item All critical points of $M_{n,d,\lambda}$ are mapped in exactly $m>2$ iterates into  $A_{M_{n,d,\lambda}}^*(\infty)$. Then, the Julia set $\mathcal{J}(M_{n,d,\lambda})$ is a Sierpinsky carpet.
	\end{itemize}
\end{theoremnn}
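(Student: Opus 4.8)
I would work under the assumption $\tfrac1n+\tfrac1d<1$, which holds in the case of interest $(n,d)=(4,2)$ and is exactly what makes the three outcomes geometrically distinct. Write $M=M_{n,d,\lambda}$, $\delta=n+d=\deg M$, and $B=A^*_{M}(\infty)$ for the immediate basin; recall $\infty$ is superattracting of local degree $n$ and $0$ is a pole of order $d$ with $M(0)=\infty$. First I would linearise at $\infty$ by a B\"ottcher coordinate, producing an invariant round disk on which $M$ is conjugate to $w\mapsto w^n$; pulling it back gives $B$ together with the \emph{trap door} $T$, the component of $M^{-1}(B)$ containing $0$. The identity $M(\nu z)=\nu^{\,n}M(z)$ for every $\nu$ with $\nu^{\delta}=1$ shows that the $\delta$ free critical points (the roots of $z^{\delta}=d\lambda/n$) form a single orbit under $z\mapsto\nu z$ and that their images are permuted the same way; hence they all escape simultaneously and reach $B$ after the same number of iterates, so the trichotomy is exhaustive and everything reduces to the position of one critical value $v$ in the tower of preimages of $B$. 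Using $\tfrac1n+\tfrac1d<1$ in a modulus/Riemann--Hurwitz estimate I would show $\overline B\cap\overline T=\emptyset$, that $B,T$ are Jordan domains, that $M\colon B\to B$ and $M\colon T\to B$ are proper of degrees $n$ and $d$, and that $M^{-1}(B)=B\sqcup T$. Consequently $\mathcal A:=\wcom\setminus(\overline B\cup\overline T)$ is an essential annulus whose self-covering structure is dictated by where the free critical points sit.

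\textbf{Cantor set of circles} ($v\in T$; the critical points reach $B$ in two steps). Here $A_1:=M^{-1}(T)$ is a single essential sub-annulus of $\mathcal A$ carrying all $\delta$ free critical points, and $\mathcal A\setminus A_1$ splits into two essential sub-annuli $\mathcal A^{\mathrm{out}},\mathcal A^{\mathrm{in}}$ adjacent to $\partial B$ and $\partial T$. Since all branching of $M|_{\mathcal A}$ is concentrated in $A_1$, the restrictions $M\colon\mathcal A^{\mathrm{out}}\to\mathcal A$ and $M\colon\mathcal A^{\mathrm{in}}\to\mathcal A$ are \emph{unbranched} coverings of degrees $n$ and $d$, whence $\mathrm{mod}(\mathcal A^{\mathrm{out}})=\tfrac1n\mathrm{mod}(\mathcal A)$ and $\mathrm{mod}(\mathcal A^{\mathrm{in}})=\tfrac1d\mathrm{mod}(\mathcal A)$. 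Because $\tfrac1n+\tfrac1d<1$, the Gr\"otzsch inequality forces these two sub-annuli to leave a gap of definite modulus, so iterating the two inverse branches produces a nested family of annuli whose intersection is the set of non-escaping points. This set is the Julia set and is a Cantor set of quasicircles, each separating $0$ from $\infty$.

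\textbf{Cantor set of points and Sierpi\'nski carpet.} When $v\in B$ (the critical points already lie in $B$), the basin absorbs the whole critical set and becomes completely invariant and infinitely connected, so the Fatou set equals $B$ and $\mathcal J(M)=\partial B$. The map is hyperbolic, and I would conclude that $\mathcal J(M)$ is totally disconnected by a standard expanding-repeller argument: on a neighborhood of $\mathcal J(M)$ the inverse branches of $M$ are uniform contractions and carry no critical value, so $\mathcal J(M)$ is conjugate to a one-sided subshift on finitely many symbols, hence a Cantor set of points (the exact analogue of $z^2+c$ outside the Mandelbrot set). When instead $v$ reaches $B$ only after $m-1>1$ further steps, the free critical points lie in $\mathcal A\setminus A_1$, so the two annulus branches above acquire branch points, and this branching glues the would-be Cantor circles into a connected set. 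To identify $\mathcal J(M)$ as a carpet I would verify the hypotheses of Whyburn's topological characterisation: $\mathcal J(M)$ is compact, nowhere dense, and (by hyperbolicity) locally connected; it is connected because every Fatou component is an iterated preimage of $B$ and is a Jordan domain; and the closures of distinct Fatou components are pairwise disjoint. The condition $m>2$ enters decisively in the last point, guaranteeing that the separating annuli at each level of the preimage tower have positive modulus so that no two preimages of $T$ ever touch. Whyburn's theorem then yields the Sierpi\'nski carpet.

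\textbf{Main obstacle.} The genuinely delicate step is the carpet case: one must control the preimage tower finely enough to establish simultaneously that all complementary Fatou components are Jordan domains with pairwise disjoint closures and that $\mathcal J(M)$ is locally connected, the disjointness degenerating exactly at the boundary $m=2$ between the carpet and the Cantor-of-circles regimes. The uniform modulus estimate underlying both the circle case and the carpet case, together with its dependence on $\tfrac1n+\tfrac1d<1$, is the quantitative heart of the argument; by comparison the B\"ottcher normalisation, the Riemann--Hurwitz degree counts, and the verification of hyperbolicity are routine.
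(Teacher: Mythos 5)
This statement is not proved in the paper at all: it is the Escape Trichotomy of Devaney--Look--Uminsky, quoted verbatim from \cite{DLU} as a background result, so there is no in-paper argument to compare yours against. What you have written is, in outline, the standard proof from \cite{DLU} (building on McMullen's original Cantor-of-circles construction): B\"ottcher coordinate at $\infty$, the trap door $T$, the symmetry $M(\nu z)=\nu^{n}M(z)$ forcing all free critical orbits to escape at the same rate, the Gr\"otzsch/modulus estimate under $\frac1n+\frac1d<1$ for the nested-annuli argument, and Whyburn's topological characterisation for the carpet case. You also correctly reinstated the hypothesis $\frac1n+\frac1d<1$, which the paper's statement omits but which is needed for the Cantor-of-circles alternative (for $n=d=2$ that case cannot occur); this is harmless here since the paper only uses $(n,d)=(4,2)$.

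Two places where your reasoning, as written, is imprecise. First, in the Cantor-set case, ``conjugate to a one-sided subshift on finitely many symbols'' does not by itself yield a Cantor set (a subshift may have isolated points); the actual argument is that a disk $D\supset\mathcal J$ avoiding the critical values has $M^{-1}(D)$ equal to $n+d\geq 2$ pairwise disjoint disks compactly contained in $D$, each mapped univalently onto $D$, so the non-escaping set is conjugate to the \emph{full} shift. Second, the dichotomy between $m=2$ and $m>2$ is not at bottom a positive-modulus statement: when $m=2$ the single component of $M^{-1}(T)$ containing all $n+d$ free critical points has Euler characteristic $(n+d)\cdot 1-(n+d)=0$ by Riemann--Hurwitz, i.e.\ it is an essential annulus, and that is what disconnects $\mathcal J$; for $m>2$ the critical Fatou components are disks, and the genuinely delicate step (which you correctly flag as the main obstacle) is proving that distinct Fatou components have disjoint closures and that $\mathcal J$ is locally connected.
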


\subsection{The main result}\label{sec:presentthma}

 The goal of this paper is to relate the dynamics of $R_a$ with the dynamics of the \newr{following} McMullen map
\begin{equation}\label{eq:Mcmullen}
M_{\lambda}(z):=M_{4,2,\lambda}(z)=z^4+\frac{\lambda}{z^2}
\end{equation}
for parameters $a$ close to 0. In  Figure~\ref{fig:dyn_plane} we can observe that there appear structures in $\mathcal{J}(R_a)$ similar to  the Cantor sets of circles,  the Sierpinski carpet and the Cantor set of the family of maps $M_{\lambda}$. In Figure \ref{fig:param} we compare the parameter plane of $R_a$ near the origin \newr{to} the parameter plane of $M_{\lambda}$.

\begin{figure}[hbt!]
    \centering
   \subfigure{
     \includegraphics[width=170pt]{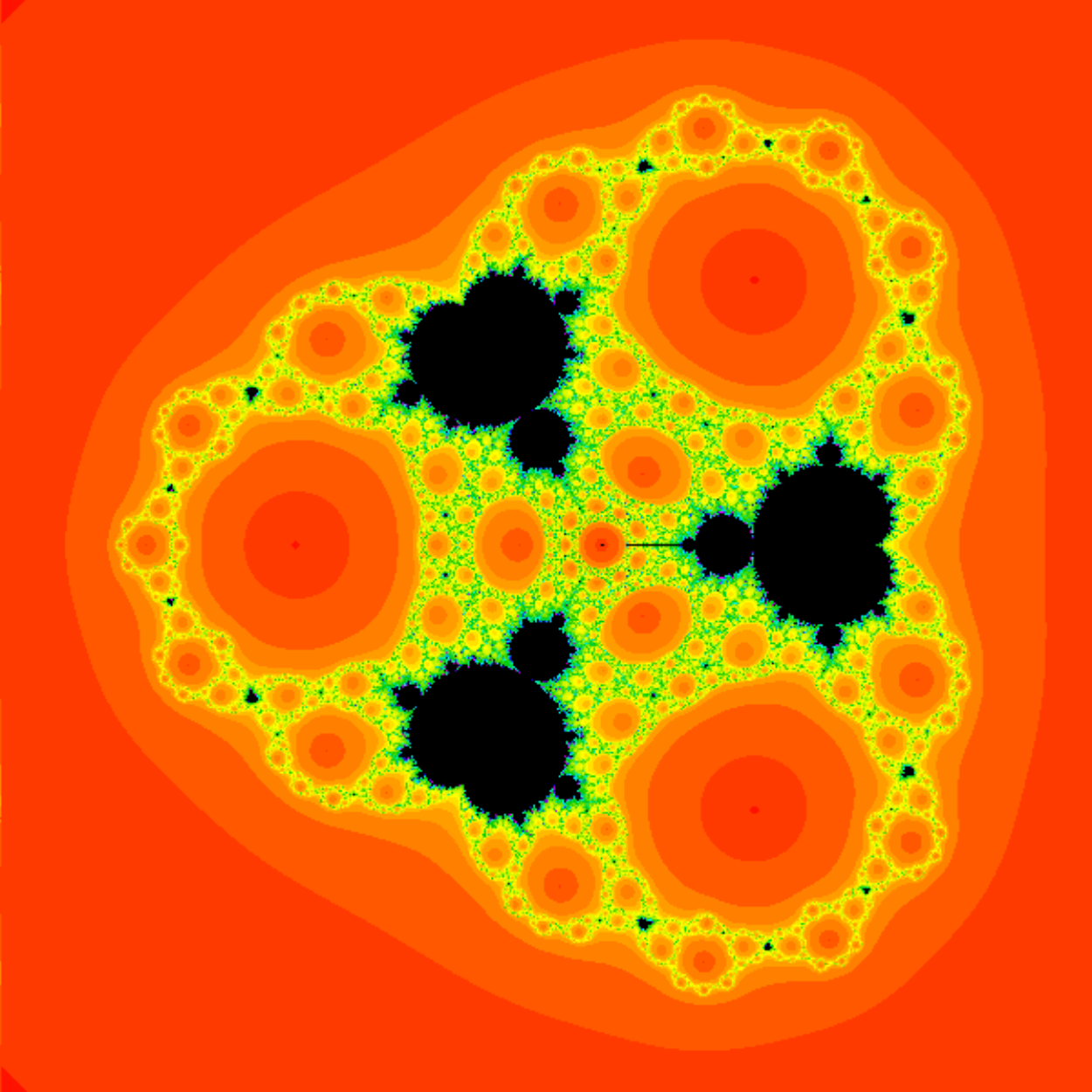}}
    \hspace{0.1in}
    \subfigure{
  \includegraphics[width=170pt]{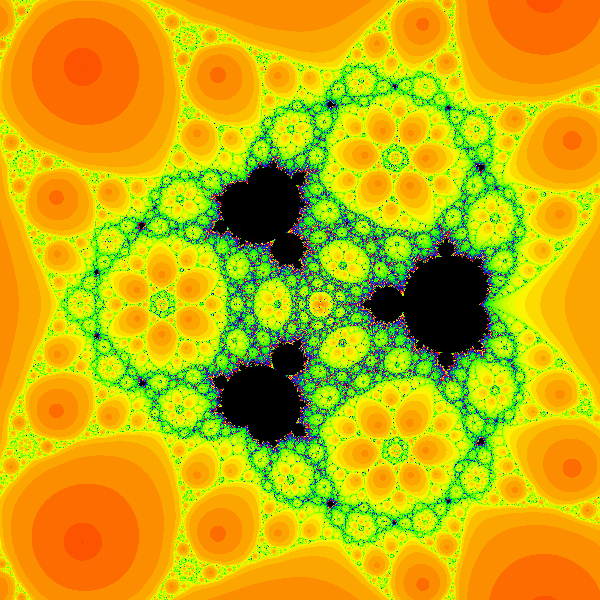}}
  \caption{\small{ \newr{On} the left side we \newr{plot} the parameter plane of  $M_{\lambda}$ \eqref{eq:Mcmullen}  and \newr{on} the right side the parameter plane of  $R_a$ \eqref{eq:Ra} near $a=0$.} \label{fig:param}}   
  \end{figure}

The main result of this work is the following theorem, which  basically states that for $a$ in a neighbourhood  of the origin (see Section \ref{sec:Dynamics:Ra}   for details) the second iterate of $R_a$    \eqref{eq:Ra} is conjugate to    some  $M_{\lambda}$  \eqref{eq:Mcmullen} in a concrete annulus defined in the dynamical plane.    The result implies that a copy of the Julia set of $M_{\lambda}$ is contained in the Julia set of $R_a$.

\begin{theoremA}\label{ThmA}   There exists a neighbourhood  $\Lambda$ of $0$ such that for  $a\in\Lambda \setminus \{0\}$   the map   $R^2_a$ is 	conjugated to a map in the McMullen family $M_{\lambda}$. More precisely,  there exists a quasiconformal map $\varphi_a:\wcom\rightarrow\wcom$ such that $\varphi_a\left(R^2_a(z)\right)=M_{\lambda(a)}\left(\varphi_a(z)\right)$ for all $z$ in some  annulus $A_a$ 
with $\varphi_a^{-1}(\mathcal{J}(M_{\lambda(a)}))\subset A_a$,  where $a \mapsto \lambda(a)$ is a map defined on $\Lambda$.
\end{theoremA}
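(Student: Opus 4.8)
The plan is to realize $R_a^2$ as a McMullen map through a cut-and-paste quasiconformal surgery (as in \cite{BF}), exploiting that the degenerate map $R_0^2$ already carries the model. At $a=0$ the super-attracting $2$-cycle $\{0,\infty\}$ makes $0$ (and symmetrically $\infty$) a super-attracting fixed point of $R_0^2$ of local degree $4$, since each leg $0\to\infty$ and $\infty\to0$ has degree $2$; hence on the immediate basin of $0$ the map $R_0^2$ is conformally conjugate to $z\mapsto z^4=M_0$. First I would combine the uniform convergence $R_a\to R_0$ on compact sets with the holomorphic motion of $\overline{A_0(\zeta^k)}$ (Lemma~\ref{lemma:motionbasins}) to control the deformation of this picture for small $a\neq0$: the point $0$ persists as a degree-$2$ pole with $R_a^2(0)=\infty$, exactly as the pole of $M_\lambda$, while $\infty$ turns repelling; and, crucially, the free critical points and values move according to \eqref{eq:crit} and \eqref{eq:critvalue}, with $c_{a,j}\to\infty$ and $v_{a,j}\to0$ as $a\to0$. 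Tracing the orbit $z_0\mapsto c_{a,j}\mapsto v_{a,j}$ shows that $R_a^2$ acquires $6$ free critical points near $0$ (the two preimages in a small neighbourhood of $0$ of each $c_{a,j}$), whose images $v_{a,j}$ again cluster at $0$ — the exact folding pattern of $M_{4,2,\lambda}$.

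Second, I would isolate an $\mathbb U$-symmetric annulus $A_a$ around $0$, shrinking to $0$ as $a\to0$, together with its complementary disks $\Delta^{\mathrm{in}}$ (around $0$, the trap-door candidate) and $\Delta^{\mathrm{out}}$ (containing the $\zeta^k$-basins and $\infty$), bounded by curves built from the linearizing/Böttcher coordinates that survive the perturbation, and prove that $R_a^2$ restricted to $A_a$ is a proper branched cover of degree $6$ with precisely the critical data of $M_\lambda$ on its fundamental annulus. The global degree $36$ collapses to $6$ on $A_a$ because only the folding recorded by $v_{a,j}$ near $0$ is visible there; the conjugacy parameter $\lambda(a)$, with $\lambda(a)\to0$, is read off from the position of $v_{a,j}$, and the escape pattern of $v_{a,j}$ relative to $\Delta^{\mathrm{in}}$ will later produce the three cases of the Escape Trichotomy. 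Throughout I use that $R_a^2$ and $M_\lambda$ share the symmetry $z\mapsto\zeta z$, since $M_\lambda(\zeta z)=\zeta^4M_\lambda(z)=\zeta M_\lambda(z)$, so the whole construction can be made equivariant.

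Third comes the surgery itself: I would complete $R_a^2|_{A_a}$ to a global degree-$6$ quasiregular map $G_a\colon\wcom\to\wcom$ by discarding the dynamics on $\Delta^{\mathrm{out}}$ and gluing in there a degree-$4$ super-attracting basin in place of $A^*_{M_\lambda}(\infty)$, while keeping $R_a^2$ on $A_a\cup\Delta^{\mathrm{in}}$ (so that $\Delta^{\mathrm{in}}$ serves as the degree-$2$ trap door), the pieces being matched across a thin collar by quasiconformal interpolation, so that $G_a=R_a^2$ on $A_a$ and $G_a$ is holomorphic off the collar. I would then pull back the standard conformal structure under the iterates of $G_a$; since the non-holomorphicity is confined to the collar and each orbit crosses it a bounded number of times (orbits fall into the pasted basin and meet the collar only on the way in), the resulting $G_a$-invariant Beltrami coefficient $\mu_a$ has $\|\mu_a\|_\infty<1$. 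By the Measurable Riemann Mapping Theorem there is a quasiconformal $\varphi_a$ solving the Beltrami equation with coefficient $\mu_a$, and $F_a:=\varphi_a\circ G_a\circ\varphi_a^{-1}$ is a rational map of degree $6$.

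Finally I would apply the characterization of \S\ref{sec:rigidity} to recognize $F_a$ as a member of the McMullen family, i.e.\ $F_a=M_{\lambda(a)}$ after normalizing $\varphi_a$ to send the pasted super-attracting point to $\infty$ and the pole to $0$. Since $G_a=R_a^2$ on $A_a$, the map $\varphi_a$ conjugates $R_a^2$ to $M_{\lambda(a)}$ there, and as the surgery is supported off the region carrying the non-escaping dynamics one gets $\mathcal J(M_{\lambda(a)})\subset\varphi_a(A_a)$, hence $\varphi_a^{-1}(\mathcal J(M_{\lambda(a)}))\subset A_a$. I expect the principal obstacle to lie in the second and third steps: pinning down $A_a$ and proving that $R_a^2$ there is exactly a degree-$6$ branched cover with the McMullen critical orbit data — which forces the holomorphic motion and the estimate $R_a\to R_0$ to be made quantitative — and carrying out the gluing equivariantly, with compatible boundary degrees and uniformly bounded dilatation, so that the straightened map is genuinely in normal form $z^4+\lambda/z^2$.
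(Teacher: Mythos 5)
Your proposal follows essentially the same route as the paper: a cut-and-paste surgery that keeps $R_a^2$ on a $\mathbb U$-symmetric annulus around $0$ carrying the six critical points, glues degree-$4$ super-attracting dynamics on the outer complementary region, straightens via the Measurable Riemann Mapping Theorem, and identifies the straightened map through the rigidity characterization of Proposition~\ref{prop:rigidity}. The only cosmetic differences are that you keep $R_a^2$ on the inner trap-door disk where the paper pastes a model $1/z^2$ (both work, since that disk maps properly with degree $2$ onto the outer region), and that the paper realizes your ``curves that survive the perturbation'' concretely as holomorphic motions of ray/equipotential curves of $R_0$ together with the mapping degrees of Propositions~\ref{prop:confpert} and~\ref{prop:rationallikeconf} --- exactly the technical work you correctly flag as the main obstacle.
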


\begin{figure}[p]
	\centering
	\subfigure{ \includegraphics[width=140pt]{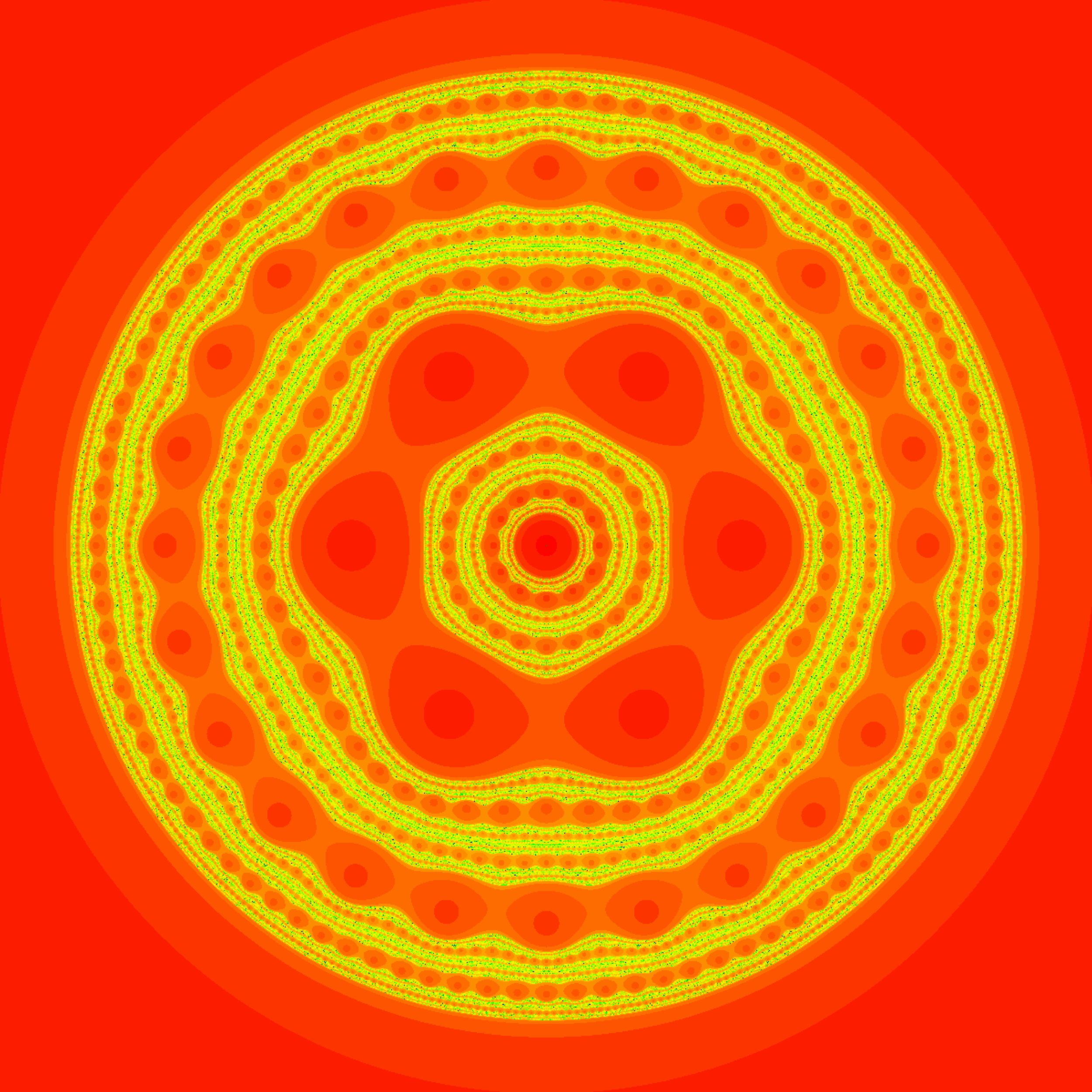}}
	\hspace{0.01in}
	\subfigure{ \includegraphics[width=140pt]{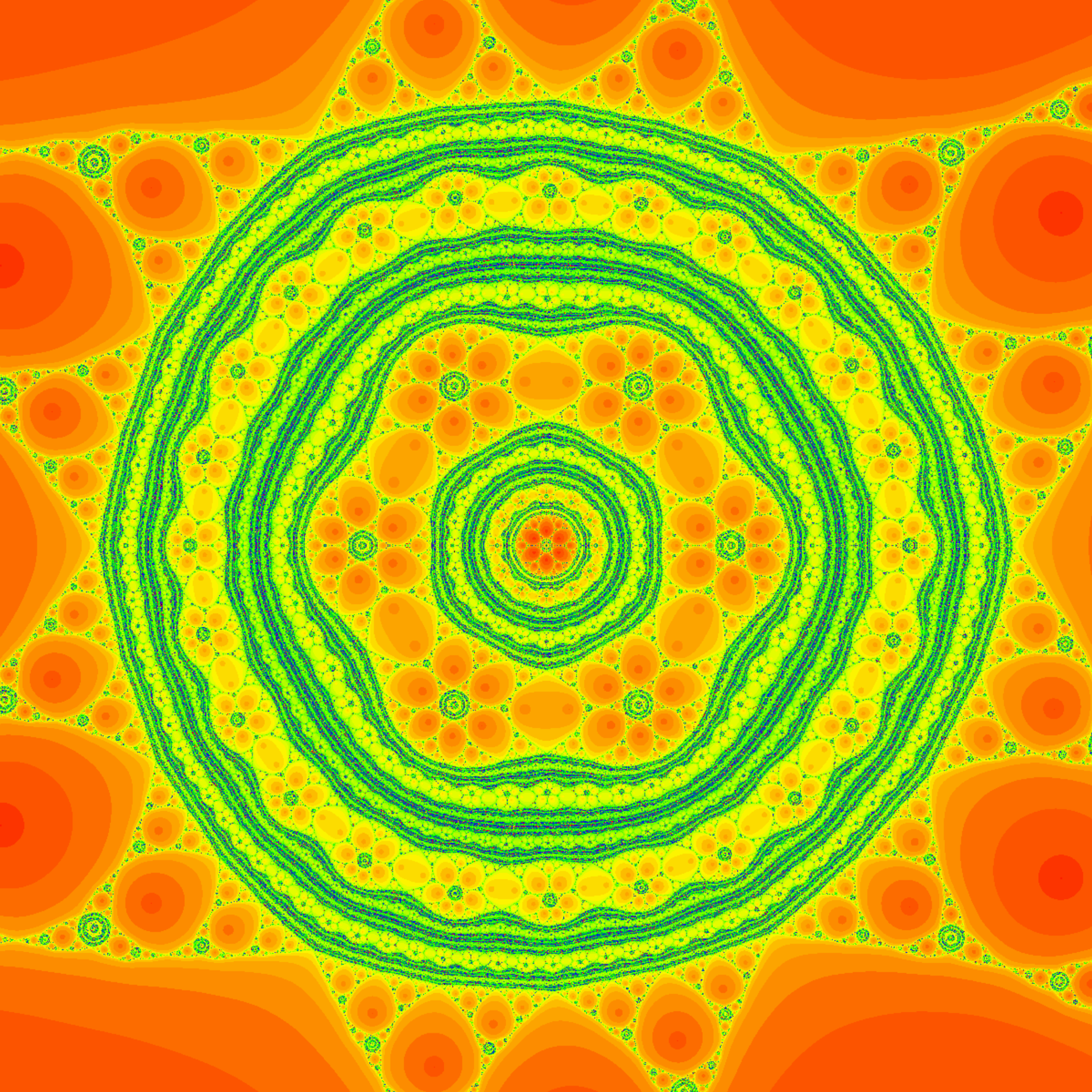}}
	\subfigure{\includegraphics[width=140pt]{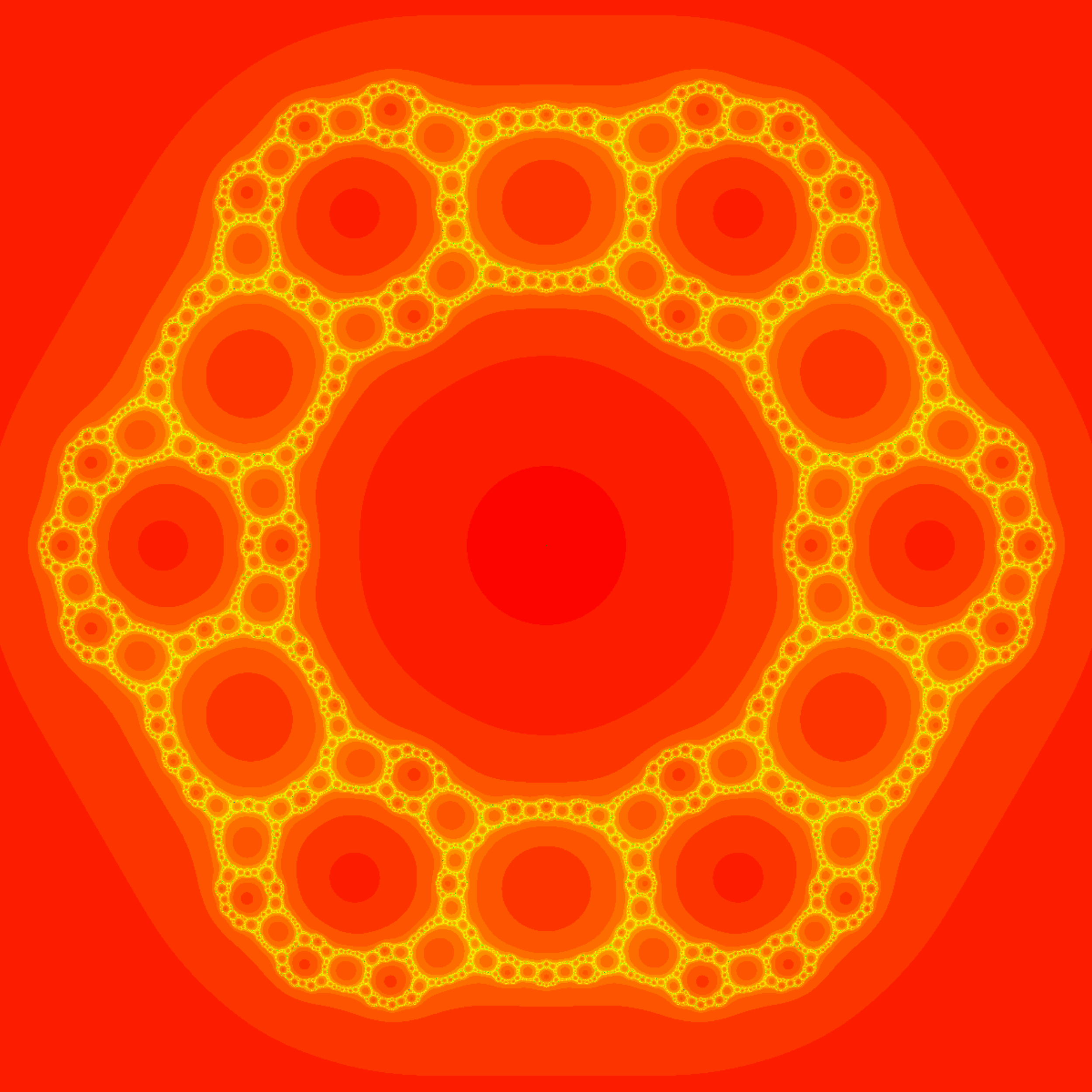}}
	\hspace{0.01in}
	\subfigure{\includegraphics[width=140pt]{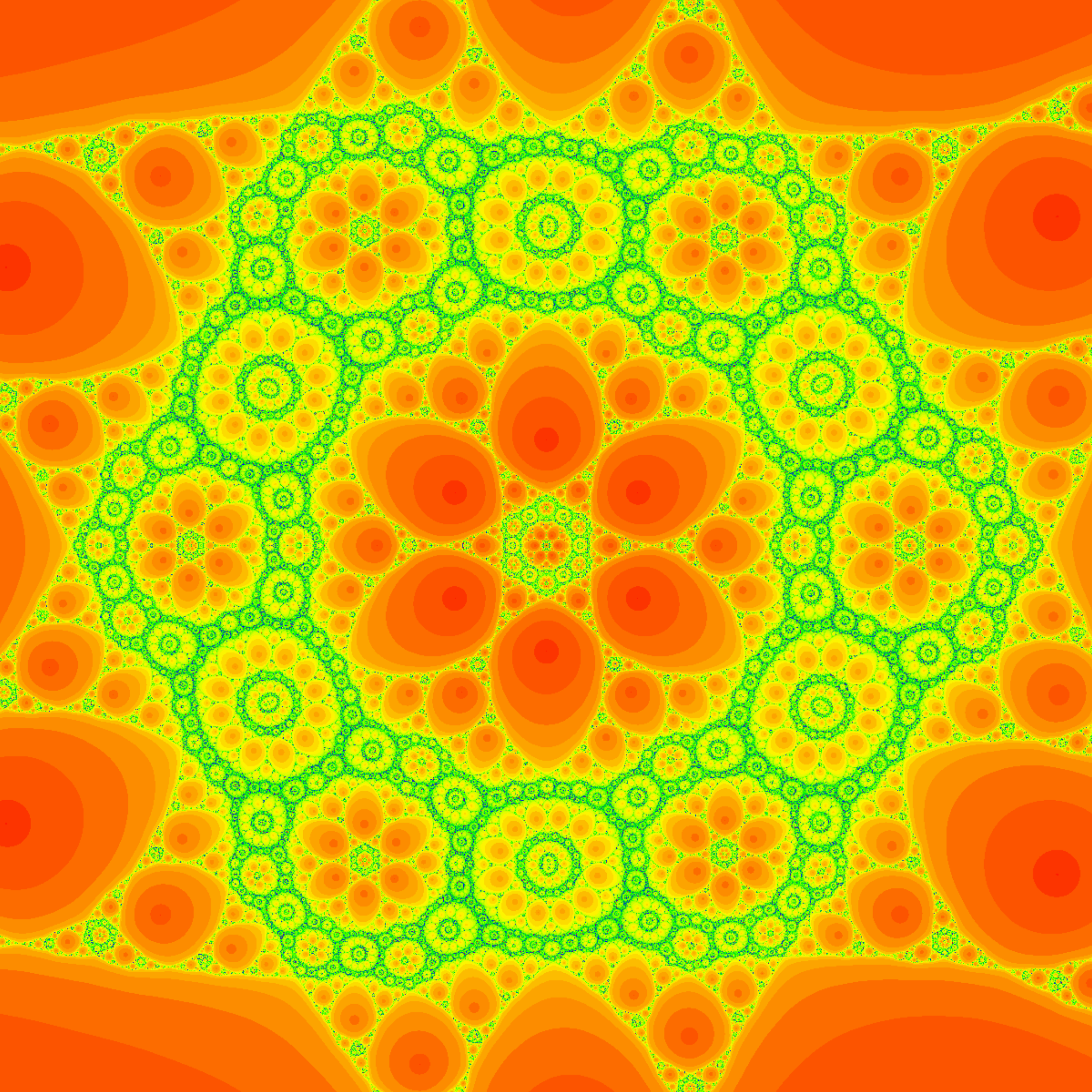}}
	\subfigure{\includegraphics[width=140pt]{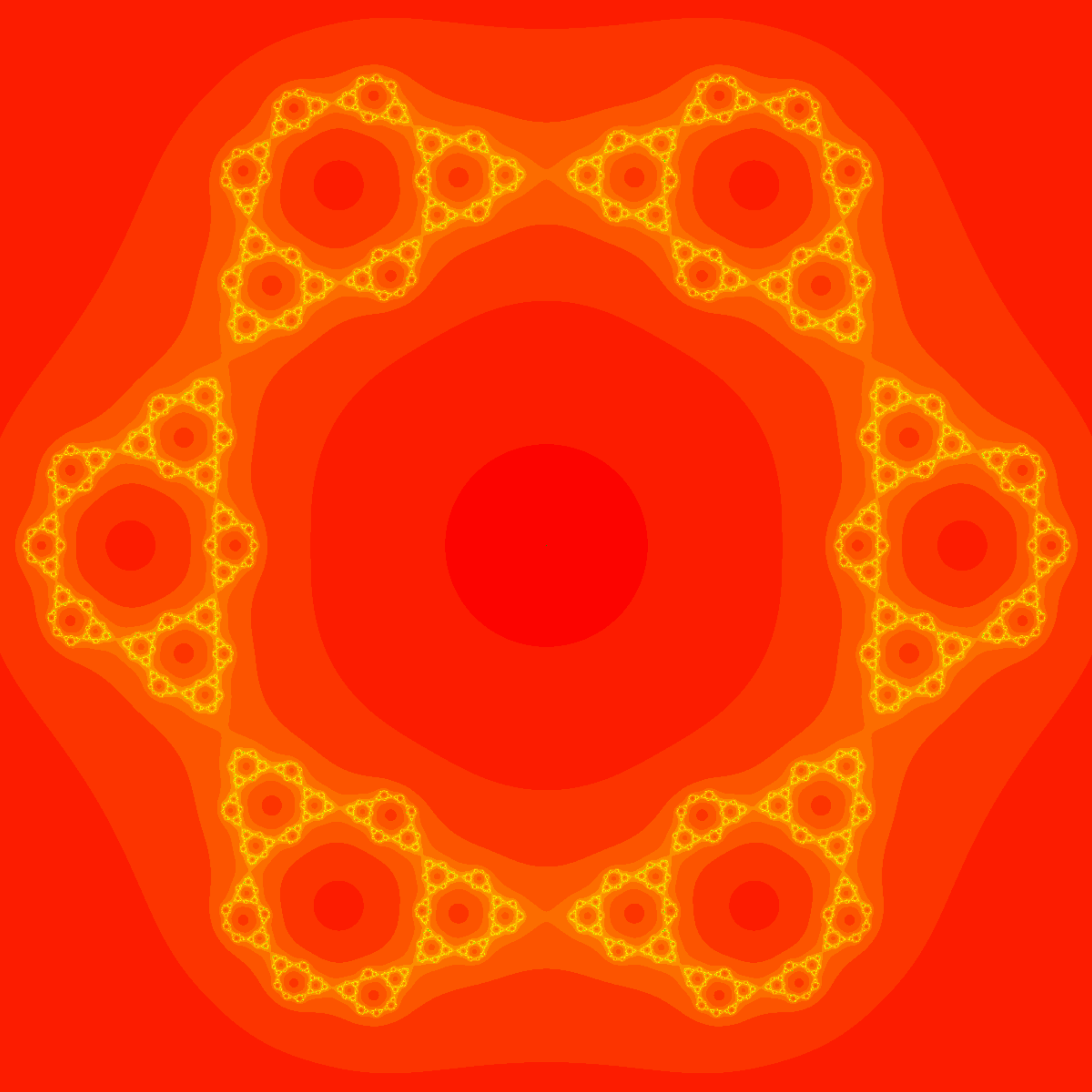}}
	\hspace{0.01in}
	\subfigure{\includegraphics[width=140pt]{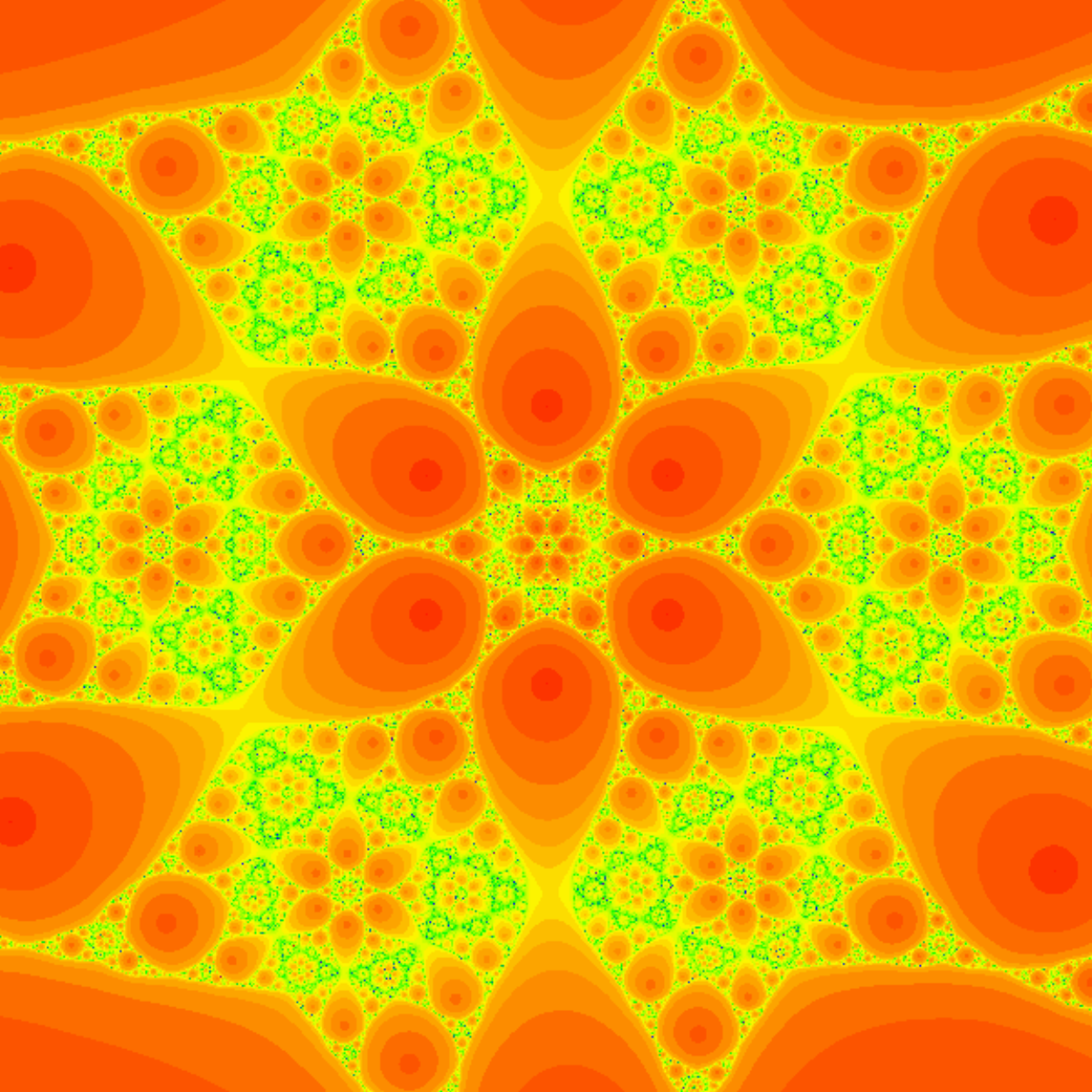}}
	
	\caption{\small{In the first column we  \newr{present}  the dynamical plane of $M_{\lambda}$ \eqref{eq:Mcmullen} for $\lambda=0.005$ (top)  $\lambda=-0.28$ (middle) and $\lambda=-0.455$ (bottom). In the second column  the dynamical plane of $R_a$ (see \eqref{eq:Ra}) for $a=-0.0003$ (top)  $a=-0.0164$ (middle) and $a=-0.028$ (bottom) with $\re(z)\in (-0.245, 0.245)$ and $\im(z)\in (-0.245, 0.245)$. In the dynamical plane of $ R_a$ red points represent points converging to a third root of unity while in the McMullen family $M_{\lambda}$  red points represent points converging to infinity. \label{fig:dyn_plane}}}   
\end{figure}

\subsection{Rigidity of McMullen's family}\label{sec:rigidity}\ 

 The following proposition is  a  characterisation for a  rational map to be  linearly conjugate to McMullen's map $M_{\lambda}(z)=z^4+\frac{\lambda}{z^2}$ \eqref{eq:Mcmullen}.

\begin{proposition}\label{prop:rigidity}
 Let $Q:\wcom\rightarrow \wcom$ be a degree 6 rational map satisfying the following properties:
 \begin{enumerate}[a)]
 	\item The point $z=\infty$ is super-attracting of local degree 4;
 	\item The point $z=0$ is a double preimage of $z=\infty$.
 	\item The map $Q$ is symmetric with respect to multiplication by a third root of the unity, i.e. $Q(\xi z)=\xi Q(z)$ where   $\xi^3=1$. 
 	\item The map $Q$ has exactly 6 different simple critical points (other than $z=0$ and $z=\infty$) which are mapped under $Q$ onto exactly 3 different critical values.
 \end{enumerate}
Then $Q$  is linearly conjugate to 
$$M_\lambda (z)=z^4+\frac{\lambda}{z^2}.$$ 
\end{proposition}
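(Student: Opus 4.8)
The plan is to reduce $Q$ to an explicit normal form using properties (a)--(c), and then to use property (d) to kill the one remaining degree of freedom.

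First I would exploit the symmetry (c). Since $Q(\xi z)=\xi Q(z)$ for every $\xi\in\mathbb{U}$, the rational function $Q(z)/z$ is invariant under $z\mapsto\xi z$ and hence descends to a rational function of $z^3$; that is, $Q(z)=z\,H(z^3)$ for some rational map $H$. Next I would determine $H$ from the local data at $0$ and $\infty$ together with the degree. Writing $H=N/D$ in lowest terms, property (a) ($\infty$ fixed of local degree $4$) forces $Q(z)\sim C z^4$ at infinity, i.e.\ $\deg N-\deg D=1$; property (b) ($0\mapsto\infty$ of local degree $2$) forces $H$ to have a simple pole at $w=0$ with $Q(z)\sim C z^{-2}$, so $w\mid D$ exactly once and $N(0)\neq0$. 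After the cancellation $Q(z)=N(z^3)/\big(z^2 D_1(z^3)\big)$, where $D=wD_1$, the requirement $\deg Q=6$ pins down $\deg D=1$, $\deg N=2$, whence $H(w)=aw+b+c/w$ with $a,c\neq0$, i.e.
\[
Q(z)=a z^4+b z+\frac{c}{z^2}.
\]
Conjugating by $z\mapsto\mu z$ replaces $(a,b,c)$ by $(a\mu^3,\,b,\,c\mu^{-3})$, so choosing $\mu^3=1/a$ would bring $Q$ to $z^4+(ac)/z^2=M_{ac}$, \emph{provided} one can show $b=0$.

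The heart of the argument, and the step I expect to be the main obstacle, is to show that (d) forces $b=0$. The finite nonzero critical points solve $4az^6+bz^3-2c=0$; with $w=z^3$ these correspond to the two roots $u_1,u_2$ of $4aw^2+bw-2c=0$, and the ``six simple critical points'' half of (d) is exactly the condition $u_1\neq u_2$ (simple roots of the sextic). For the critical values, I would use that at a critical point $z_0$ with $z_0^3=u_i$ one has $Q(z_0)=z_0\big(au_i+b+c/u_i\big)$; simplifying $c/u_i$ via the critical equation collapses this to $Q(z_0)=3a\,z_0(u_i+s)$ with $s=b/(2a)$. Thus each $u_i$ produces the $\mathbb{U}$-orbit of critical values whose common cube is $27a^3u_i(u_i+s)^3$, and the two orbits coincide --- giving exactly three critical values rather than six --- precisely when $u_1(u_1+s)^3=u_2(u_2+s)^3$. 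Expanding, dividing by $u_1-u_2$, and rewriting through the symmetric functions $u_1+u_2=-s/2$ and $u_1u_2=-c/(2a)$, this equation factors as
\[
s\left(\frac{s^2}{8}-2\,u_1u_2\right)=0.
\]
The second factor is a constant multiple of the discriminant of $4aw^2+bw-2c$, so it vanishes exactly when $u_1=u_2$, which is excluded by the simplicity in (d); hence $s=0$, i.e.\ $b=0$.

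Finally, with $b=0$ we have $Q(z)=az^4+c/z^2$, and conjugation by $z\mapsto\mu z$ with $\mu^3=1/a$ gives $M_\lambda$ with $\lambda=ac$. Everything outside the critical-value computation is bookkeeping with the symmetry and Riemann--Hurwitz, the latter confirming that the multiplicities are exactly $3+1+6=2\cdot 6-2$ so that no critical point is left unaccounted for; the only delicate point is the factorization above, which is where all the hypotheses of (d) are genuinely used.
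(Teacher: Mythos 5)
Your proposal is correct and follows essentially the same route as the paper: reduce $Q$ to the normal form $(az^6+bz^3+c)/z^2$ using (a)--(c), locate the critical points via the quadratic in $w=z^3$, and use the ``six points, three values'' condition to force the two $\mathbb{U}$-orbits of critical values to coincide, which after cubing yields $b=0$. The only difference is cosmetic: you carry out the final algebra with the symmetric functions $u_1+u_2$, $u_1u_2$ and identify the surviving factor with the discriminant, whereas the paper substitutes the explicit roots $w_\pm=(-b\pm\Delta)/8$ and arrives at $432\Delta^3 b=0$.
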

\proof

 It follows directly from a), b) and c) that the map $Q$ can be written as
$$Q(z)=\frac{az^6 +\tilde{b}z^3+\tilde{\lambda}}{z^2},$$
where $a,\tilde{b},\tilde{\lambda}\in\com$ and $a \tilde{\lambda}\neq 0$. This map is linearly conjugate to
$$M_{b,\lambda}(z)=\frac{z^6 +bz^3+\lambda}{z^2},$$
where $b,\lambda\in\com$, by the map $z\rightarrow z/\sqrt[3]{a}$. We now use property d) to prove that $b=0$. The critical points of $M_{b,\lambda}$ are solutions of
$$4z^6+bz^3-2\lambda=0.$$
Writing $w=z^3$, we get the equation $4w^2+bw-2\lambda=0$, whose solutions are
$$w_\pm=\frac{-b\pm\sqrt{b^2+32\lambda}}{8}.$$
 By d)  $M_{b,\lambda}$ has 6 different simple critical points (other than $z=0$ and $z=\infty$), we conclude that $\Delta:=\sqrt{b^2+32\lambda}\neq 0$. Moreover, we have that $w_+\cdot w_-=-\lambda/2$.

Let $\zeta=e^{2\pi i/3}$. These 6 critical points of $M_{b,\lambda}$ are labelled:
\begin{itemize}
	\item $c_{1,+}=\sqrt[3]{w_+}$, $c_{2,+}=\zeta\cdot c_{1,+}$, $c_{3,+}=\zeta^2\cdot c_{1,+}$;
	\item $c_{1,-}=\sqrt[3]{w_-}$, $c_{2,-}=\zeta\cdot c_{1,-}$, $c_{3,-}=\zeta^2\cdot c_{1,-}$.
\end{itemize}
Notice that the labelling of the critical points depends on arbitrary choices of the square and cubic roots, but this is not important since we find them all. By hypothesis,  $M_{b,\lambda}$ maps these 6 critical points to exactly three different critical values. Notice that if $M_{b,\lambda}(c_{j,+})=M_{b,\lambda}(c_{k,+})$ with $j\neq k$ then, by symmetry, $M_{b,\lambda}(c_{1,+})=M_{b,\lambda}(c_{2,+})=M_{b,\lambda}(c_{3,+})$. It would then follow that $M_{b,\lambda}$ maps the six critical points to two critical values, which is not possible by hypothesis. We can conclude that $M_{b,\lambda}(c_{j,+})=M_{b,\lambda}(c_{k,-})$ for some $j$ and $k$. For all critical points $c_{j,\pm}$ we have
$$M_{b,\lambda}(c_{j,\pm})=c_{j,\pm}^4+b\cdot c_{j,\pm}+\frac{\lambda}{c_{j,\pm}^2}=c_{j,\pm}\left(c_{j,\pm}^3+b+\frac{\lambda}{c_{j,\pm}^3}\right)=c_{j,\pm}\left(w_\pm+b+\frac{\lambda}{w_\pm}\right)=$$
$$=c_{j,\pm}\left(w_\pm+b-2w_\mp\right).$$
By taking the third power on both sides of the equality $M_{b,\lambda}(c_{j,+})=M_{b,\lambda}(c_{k,-})$ we obtain, independently of $j$ and $k$, the equality:
$$w_+\left(w_++b-2w_-\right)^3=w_-\left(w_-+b-2w_+\right)^3.$$
By using that $w_\pm=(-b\pm\Delta)/8$, where $\Delta:=\sqrt{b^2+32\lambda}\neq 0$, simple computations yield that the previous equation is equivalent to $432\Delta^3b=0$. Since $\Delta\neq 0$, we conclude that $b=0$, which finishes the proof.
\endproof

\section{Dynamics of the map $R_0$}\label{sec:R0}

In this section we study the dynamics of the unperturbed map $R_0$. We start by analyzing the relation of the repelling fixed points of $R_0$ and the basins of attraction $A_0(\zeta^j)$, where $j=0,1,2$ (see \eqref{eq:basin}). The maps $R_a(z)$ have 3 fixed points in $\com$ other \newr{than} the third roots of the unity, given by 
\begin{equation}\label{eq:fixed}
x_{a,j}=\zeta^j\sqrt[3]{\frac{a-3}{a+3}}, \;\;\; \mbox{ where } \zeta=e^{2\pi i/3} \mbox{ and } j=0,1,2.
\end{equation}

 In this case we take the determinacy of the third root such that $\sqrt[3]{-1}=-1$ (which  is well defined for $|a|$ small). Notice that for $a=0$ these three fixed points are given by $x_{0,j}=-\zeta^j$. \newr{Recall} that the third roots of the unity are $\zeta^j$ with $j=0,1,2$.

\begin{lemma}\label{lem:fixed0}
Each repelling  fixed point $x_{0,j}$ of $R_0$  belongs to the boundary of the immediate basins of attraction  $A_a^*(\zeta^{k})$ and $A_a^*(\zeta^{k'})$   where  $j,k,k' \in  \{0,1,2\}$ and $k'\neq j \neq k \neq k'$.
\end{lemma}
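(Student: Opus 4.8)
The plan is to reduce the statement to a single immediate basin using the symmetries of $R_0$, and then to read off the repelling fixed points lying on its boundary from the landing pattern of the two invariant internal rays. First I would use the two symmetries. The relation $R_0(\zeta z)=\zeta R_0(z)$ shows that $\rho\colon z\mapsto \zeta z$ conjugates $R_0$ to itself, permutes the basins cyclically, $\rho\bigl(A_0^*(\zeta^k)\bigr)=A_0^*(\zeta^{k+1})$, and sends $x_{0,j}=-\zeta^{j}$ to $x_{0,j+1}$. Hence it suffices to identify $\partial A_0^*(1)\cap\mathrm{Fix}(R_0)$, after which $\rho$ and $\rho^{2}$ give the full statement. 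Since $R_0$ has real coefficients, complex conjugation $c\colon z\mapsto\bar z$ is also a conjugacy of $R_0$; it preserves $A_0^*(1)$ and fixes $-1$, while interchanging $A_0^*(\zeta)\leftrightarrow A_0^*(\zeta^{2})$ and $-\zeta\leftrightarrow-\zeta^{2}$. The goal becomes: \emph{the two invariant internal rays of $A_0^*(1)$ land at $-\zeta$ and $-\zeta^{2}$.}

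Next I would pin down the global structure. A direct computation gives
$$R_0'(z)=\frac{-10\,z\,(z^{3}-1)^{2}}{\left(5z^{2}+z^{5}\right)^{2}},$$
so the only critical points of $R_0$ are $0$ and $\infty$ (the super-attracting $2$-cycle) and the three roots $\zeta^{j}$ (each of local degree $3$). Thus $R_0$ is hyperbolic with no free critical points, and the Fatou set is exactly the union of the basins $A_0(\zeta^{j})$ and $A_0(\{0,\infty\})$. In particular $A_0^*(1)$ contains the single critical point $1$, so it is simply connected and carries a Böttcher coordinate $\phi\colon A_0^*(1)\to\mathbb{D}$ conjugating $R_0$ to $w\mapsto w^{3}$; by hyperbolicity $\partial A_0^*(1)$ is locally connected, so internal rays land, periodic rays landing at repelling periodic points and every (pre)periodic boundary point being such a landing point. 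There are exactly two invariant rays, at angles $0$ and $1/2$, landing at repelling fixed points on $\partial A_0^*(1)$; these can only be among $\{-1,-\zeta,-\zeta^{2}\}$, since each $\zeta^{j}$ is interior to its own basin.

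The heart of the argument is to decide, via the real axis, whether $c$ fixes or interchanges these two rays. Choosing $\phi$ to be $c$-equivariant, $c$ induces on $\mathbb{D}$ either $w\mapsto\bar w$ or $w\mapsto-\bar w$, and $A_0^*(1)\cap\R=\phi^{-1}(\mathrm{Fix})$ is the preimage of a single diameter, hence a single interval $I\ni 1$. As $0\notin A_0(1)$ (because $R_0(0)=\infty$), $I$ cannot meet $0$, so $I=(\alpha,\beta)\subset(0,\infty)$; and since $R_0$ is an orientation-reversing homeomorphism of $(0,\infty)$ with $1$ as its unique (super-attracting) fixed point, the endpoints $\{\alpha,\beta\}$ form a repelling period-$2$ orbit, in particular they are \emph{not} fixed. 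Therefore the $c$-fixed diameter cannot be the real one (whose endpoints, at angles $0,1/2$, are tripling-fixed and would land at fixed points), so it must be the imaginary diameter (angles $1/4,3/4$, a tripling period-$2$ orbit landing at $\alpha,\beta$). Consequently $c$ acts as $w\mapsto-\bar w$ and interchanges the two invariant rays, which then land at a $c$-conjugate pair of repelling fixed points. Provided these landing points are distinct, the only such pair in $\{-1,-\zeta,-\zeta^{2}\}$ is $\{-\zeta,-\zeta^{2}\}$, giving $\partial A_0^*(1)\cap\mathrm{Fix}(R_0)=\{x_{0,1},x_{0,2}\}$; applying $\rho,\rho^{2}$ then yields $x_{0,0}=-1\in\partial A_0^*(\zeta)\cap\partial A_0^*(\zeta^{2})$ and, in general, $x_{0,j}\in\partial A_0^*(\zeta^{k})$ exactly for $k\neq j$.

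I expect the main obstacle to be exactly the distinctness just flagged, i.e. ruling out the competing combinatorics in which the two invariant rays of $A_0^*(1)$ co-land at the single real fixed point $-1$ (making $-1$ a cut point of $\overline{A_0^*(1)}$ and forcing each $x_{0,j}$ onto the boundary of its \emph{own} basin only). Symmetry and the real-axis computation are both compatible with this degenerate case, so a genuinely dynamical input is needed; the natural way to finish is to show that the immediate basins $A_0^*(\zeta^{j})$ are Jordan domains (quasidisks), so that the Carathéodory extension of $\phi$ is injective on $\partial\mathbb{D}$ and the angle-$0$ and angle-$1/2$ rays necessarily land at two distinct points. Establishing this Jordan-curve property—equivalently, that $x_{0,j}$ is not accessible from $A_0^*(\zeta^{j})$—is the delicate and decisive step; once it is in place, the remainder of the proof is the routine symmetry bookkeeping above.
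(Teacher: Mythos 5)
Your overall strategy is the same as the paper's (Böttcher coordinate on $A_0^*(1)$ since there are no free critical points, the two fixed internal rays of angles $0$ and $1/2$ landing at repelling fixed points among $\{-1,-\zeta,-\zeta^2\}$, conjugation symmetry pairing the two landing points, rotation symmetry propagating the conclusion), and your real-axis computation is a nice sharpening: showing that $A_0^*(1)\cap\mathbb{R}$ is an interval in $(0,\infty)$ whose endpoints form a $2$-cycle correctly forces $z\mapsto\bar z$ to act as $w\mapsto-\bar w$ in the Böttcher coordinate, so the two fixed rays are exchanged by conjugation and their landing points are a conjugate pair. Up to that point your argument is sound and matches the paper's.

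The genuine gap is exactly the case you flag and then leave open: both fixed rays co-landing at the real fixed point $-1$. You propose to exclude it by proving that $\partial A_0^*(1)$ is a Jordan curve (so that the Carathéodory extension is injective), you acknowledge this is ``the delicate and decisive step,'' and you do not prove it. This is a real hole, and moreover a problematic route in context: the quasicircle property of the basin boundaries is the paper's \emph{later} Lemma~\ref{lem:boundary0} (and Corollary~\ref{co:quasicircle0}), whose proof is built on the dynamical partition by the curves $\eta$, $\gamma_0,\dots,\gamma_4$ — a construction that itself rests on Lemma~\ref{lem:fixed0}. So invoking Jordan-ness here is both a much heavier tool than the lemma requires and a potential circularity in the paper's logical order. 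The paper instead kills the degenerate case cheaply, by symmetry and topology alone: if the two fixed rays of $A_0^*(1)$ co-land, conjugation symmetry forces the common landing point to be $-1$; rotating by $\zeta$ then produces three pairwise disjoint closed curves, each made of two internal rays joining $\zeta^j$ to $-\zeta^j$ inside $\overline{A_0^*(\zeta^j)}$, and (each such curve meeting $\widehat{\mathbb{R}}$-type symmetry lines so as to separate $0$ from $\infty$) these three disjoint separating curves would have to be cyclically permuted by the rotation while preserving their nesting — equivalently, in the paper's phrasing, the three basin closures would be forced to intersect, which is impossible. Your proof needs either this symmetry contradiction or an independent proof of the Jordan property; as written, the decisive step is missing.
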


\begin{proof}
If $a=0$ the critical points are the roots of the unity and the points $0$ and $\infty$, which form a super-attracting 2-cycle. Every root of the unity $\zeta^j$ is a super-attracting fixed point of local degree 3. Since its immediate basin of attraction $ A_0^*(\zeta^j)$  cannot contain any free critical point, the Böttcher coordinate extends until reaching the boundary of the immediate basin of attraction. It follows that the fixed dynamical rays, of angles 0 and 1/2, land at $\partial A_{0}^*(\zeta^j)$. They either land at two different fixed points or at a common fixed point. 

If they land at different fixed points we are done. Indeed, since there are only 3 fixed points other than the roots of the unity, it follows from the symmetry in the dynamical plane  that every $x_{0,j}$ belongs to the boundary of exactly 2 immediate basins of attraction.

To finish the proof we have to see that these fixed rays cannot land at a common fixed point. We focus on the basin of attraction of the root $\zeta^0=1$. Since $R_0$ leaves the real line invariant, the map $i(z)=\overline {z}$ conjugates $R_0$ with itself.  We can conclude that if a fixed ray lands at $x_{0,j}\in\partial A^*_{0}(1)$, then the other fixed ray lands at  $\overline{x_{0,j}}\in\partial A^*_{0}(1)$. Since we are assuming that they land at the same point, we can deduce that they both land at $x_{0,0}=-1$. Using the symmetry with respect to rotation by a third root of the unity, we can conclude that $-1\in\partial A^*_{0}(1)$, that $-\zeta\in\partial A^*_{0}(\zeta)$, and that  $-\zeta^2\in\partial A^*_{0}(\zeta^2)$. However, this is impossible since then the 3 different basins of attraction would have a non-empty intersection. It also follows from the previous argument that  $x_{0,0}\notin\partial A^*_{0}(\zeta^0)$. By symmetry we conclude that $x_{0,j}\notin\partial A^*_{0}(\zeta^j)$, $j=0,1,2$.
\end{proof}

We will construct a partition of the dynamical plane of $R_0$ using dynamical rays. 
The third roots of the unity are  super-attracting fixed points of local degree 3 under the map $R_0$. Since $R_0$ has no free critical points, the B\"ottcher coordinate extends to the whole immediate basin of attraction of each third root of the unity $\zeta^j$, $j=0,1,2$. It follows that each $A^*_{0}(\zeta^j)$ contains two invariant rays, of angles 0 and $1/2$, which land at two different fixed points on $\partial A^*_{0}(\zeta^j)$  by Lemma~\ref{lem:fixed0}. 

Let $\eta$ denote the curve obtained \newr{as} the union of these 6 fixed rays (2 fixed rays for each one of the third root of the unity). It follows from Lemma~\ref{lem:fixed0}   that $\eta$ is a simple closed curve which surrounds $z=0$ and is invariant under rotation by third roots of the unity (see Figure \ref{esquemarays0}). Moreover, for each the root $\zeta^j$, the curve $\eta$ separates it from its   preimages. Indeed, the root 1 has two different real preimages, one in the interval  $(-1,0)$ and another one in the interval $(-\infty,-1)$. Notice that $\eta$ contains the repelling fixed point $-1$. 

\begin{figure}[hbt!]
\label{fig:conf0}
\centering
  \setlength{\unitlength}{250pt}%
 \begin{picture}(1,1)%
    \put(0,0){\includegraphics[width=\unitlength,page=1]{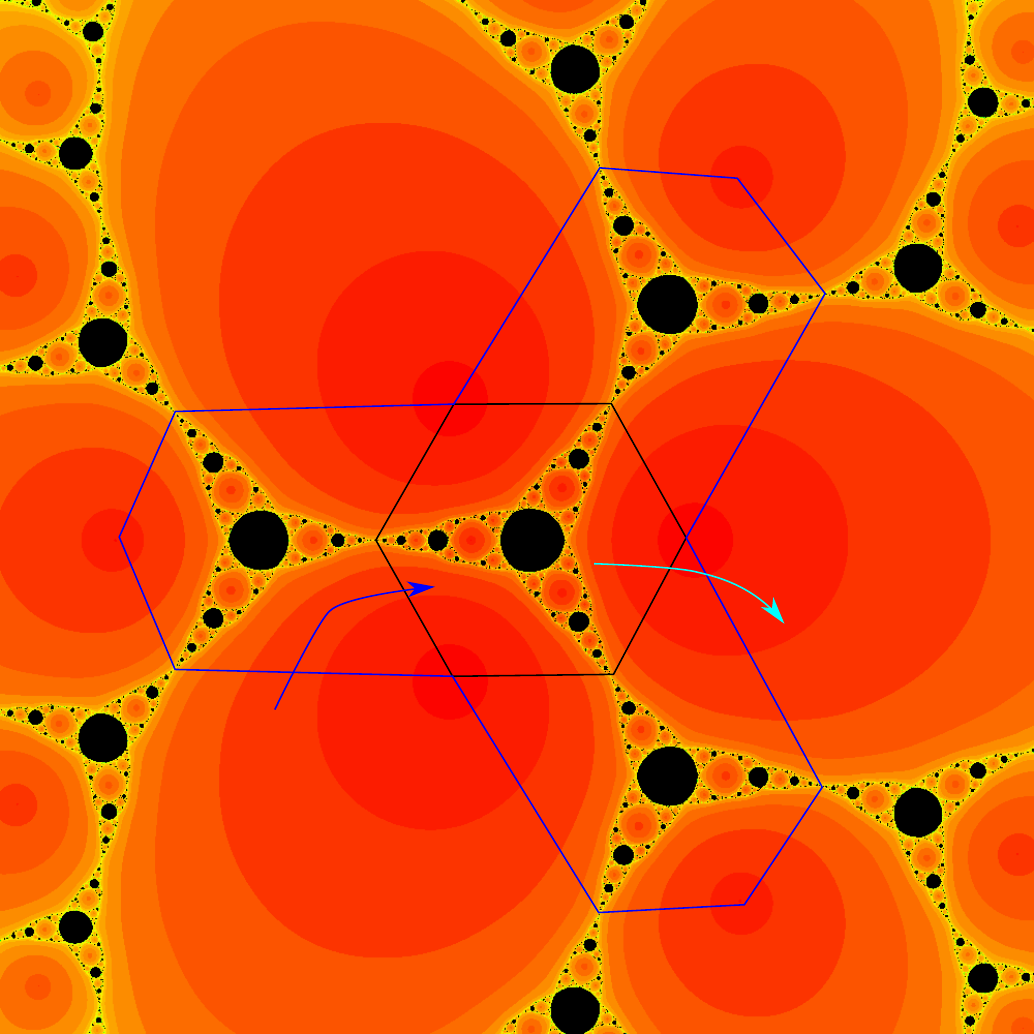}}%
    \put(0.29,0.37){\color[rgb]{0,0,0.99607843}\makebox(0,0)[lt]{\smash{\begin{tabular}[t]{l}2:1\end{tabular}}}}%
    \put(0.69,0.44){\color[rgb]{0,1,1}\makebox(0,0)[lt]{\smash{\begin{tabular}[t]{l}2:1\end{tabular}}}}%
    \put(0.50272286,0.62347955){\color[rgb]{0,0,0}\makebox(0,0)[lt]{\smash{\begin{tabular}[t]{l}0\end{tabular}}}}%
    \put(0.34,0.54326728){\color[rgb]{0,0,0}\makebox(0,0)[lt]{\smash{\begin{tabular}[t]{l}$\frac{1}{2}$\end{tabular}}}}%
    \put(0.47,0.75456876){\color[rgb]{0,0,1}\makebox(0,0)[lt]{\smash{\begin{tabular}[t]{l}$\frac{1}{6}$\end{tabular}}}}%
    \put(0.23460743,0.63307414){\color[rgb]{0,0,1}\makebox(0,0)[lt]{\smash{\begin{tabular}[t]{l}$\frac{1}{3}$\end{tabular}}}}%
    \put(0,0){\includegraphics[width=\unitlength,page=2]{conf0.pdf}}%
    \put(0.43305198,0.522){\color[rgb]{0,1,1}\makebox(0,0)[lt]{\smash{\begin{tabular}[t]{l}$\frac{2}{3}$\end{tabular}}}}%
    \put(0.48,0.58){\color[rgb]{0,1,1}\makebox(0,0)[lt]{\smash{\begin{tabular}[t]{l}$\frac{5}{6}$\end{tabular}}}}%
    \put(0.48,0.46596651){\color[rgb]{0,1,1}\makebox(0,0)[lt]{\smash{\begin{tabular}[t]{l}$Int(\eta_0)$\end{tabular}}}}%
    \put(0.57,0.52){\color[rgb]{0,1,1}\makebox(0,0)[lt]{\smash{\begin{tabular}[t]{l}$\eta_0$\end{tabular}}}}%
    \put(0.23,0.26677645){\color[rgb]{0,0,1}\makebox(0,0)[lt]{\smash{\begin{tabular}[t]{l}$Ext(\eta_{\infty})$\end{tabular}}}}%
    \put(0.05,0.45){\color[rgb]{0,0,1}\makebox(0,0)[lt]{\smash{\begin{tabular}[t]{l}$\eta_{\infty}$\end{tabular}}}}%
    \put(0.48,0.36){\color[rgb]{0,0,0}\makebox(0,0)[lt]{\smash{\begin{tabular}[t]{l}$Int(\eta)$\end{tabular}}}}%
    \put(0.61,0.39){\color[rgb]{0,0,0}\makebox(0,0)[lt]{\smash{\begin{tabular}[t]{l}$\eta$\end{tabular}}}}%
    
  \end{picture}%
\caption{\small Dynamical plane of $R_0$ and sketch of the dynamical rays involved in the construction}.
\label{esquemarays0}
\end{figure}

The preimage of $\eta$ under $R_0$ consists of the union of 3 different simple closed curves which intersect at the roots $\zeta^j$ (see Figure~\ref{esquemarays0}). One of the curves is $\eta$, which is mapped with degree 1 onto itself. The other curves are $\eta_{\infty}\subset \overline{ Ext(\eta)} $ and $\eta_0\subset  \overline{ Int(\eta)}  $. By relabelling the external rays if necessary (for each $A_{0}^*(\zeta^j)$ we can choose which fixed ray is 0 and which is 1/2), we can assume that $\eta_{\infty}$ contains the rays of angles $1/6$ and $1/3$ and that $\eta_{0}$ contains the rays of angles $2/3$ and $5/6$. Besides those rays, the curve $\eta_{\infty}$ and $\eta_0$ contain the preimages of the rays of angles 0 and 1/2 attached at the preimages of the roots of the unity contained in $Ext(\eta)$ and $Int(\eta)$, respectively (see Figure~\ref{esquemarays0}). It follows that $\eta_{\infty}$ and $\eta_0$ are simple closed curves that surround $z=0$ and are invariant under rotation by a third root of the unity.

\begin{lemma}\label{lem:confD} $R_0: Ext(\eta_{\infty}) \rightarrow Int(\eta)$ and $R_0:Int(\eta_0)\rightarrow Ext(\eta)$ are proper maps  of degree 2.
\end{lemma}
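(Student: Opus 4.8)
The plan is to realise both restrictions as proper holomorphic maps between Jordan domains and then to compute their degrees by a global count that exploits the fact that $R_0$ has no free critical points. First I would settle properness and the target side. Since $\eta_\infty$ and $\eta_0$ are components of $R_0^{-1}(\eta)$, each of them is sent onto $\eta$. The open set $Ext(\eta_\infty)$ is disjoint from $R_0^{-1}(\eta)=\eta\cup\eta_0\cup\eta_\infty$, so its connected image avoids $\eta$; as $R_0(\infty)=0\in Int(\eta)$ this forces $R_0(Ext(\eta_\infty))\subseteq Int(\eta)$. Properness is then immediate: for a compact $K\subset Int(\eta)$ the set $R_0^{-1}(K)$ is compact and cannot meet $\partial\, Ext(\eta_\infty)=\eta_\infty$ (which maps into $\eta$, disjoint from $K$), so $R_0^{-1}(K)\cap Ext(\eta_\infty)$ is compact. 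Being proper and holomorphic onto the connected set $Int(\eta)$, the map is surjective, and the same argument, now using $R_0(0)=\infty\in Ext(\eta)$, shows $R_0\colon Int(\eta_0)\to Ext(\eta)$ is proper and onto.

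Next I would locate all critical points and decompose the sphere. By Riemann--Hurwitz a degree $5$ map has $2\cdot 5-2=8$ critical points counted with multiplicity; the roots $\zeta^0,\zeta^1,\zeta^2$ (local degree $3$) account for $6$ of them and $0,\infty$ (local degree $2$) for the remaining $2$, so $R_0$ has no others. Because the three curves meet pairwise only at the three roots, where all of them cross, the graph $\eta\cup\eta_0\cup\eta_\infty$ has $V=3$ vertices and $E=9$ edges (each Jordan curve is cut into three arcs), and Euler's formula gives $F=8$ faces in $\wcom$: namely $Int(\eta_0)\ni 0$, $Ext(\eta_\infty)\ni\infty$, three ``lens'' regions between $\eta_0$ and $\eta$, and three between $\eta$ and $\eta_\infty$. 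None of these six lenses contains a critical point.

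I would then show that each lens has degree $1$. For a lens $U$ one has $R_0(\partial U)\subseteq\eta$ and $R_0(U)\cap\eta=\emptyset$, so $R_0$ maps $U$ properly onto a single side of $\eta$. Since $U$ carries no critical point, $R_0|_U$ is a proper local homeomorphism onto a simply connected domain, hence a homeomorphism, and its degree is $1$. Moreover, because $R_0(\xi z)=\xi R_0(z)$ for $\xi^3=1$ and $\eta$, $Int(\eta)$, $Ext(\eta)$ are rotation invariant, the three lenses of each band are carried to the same side of $\eta$.

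Finally I would count. Write $d_\infty=\deg\bigl(R_0\colon Ext(\eta_\infty)\to Int(\eta)\bigr)$ and $d_0=\deg\bigl(R_0\colon Int(\eta_0)\to Ext(\eta)\bigr)$. Every $w\in Int(\eta)$ has five preimages with multiplicity, all lying in the regions mapped onto $Int(\eta)$, so $d_\infty+\#\{\text{lenses}\to Int(\eta)\}=5$, and symmetrically $d_0+\#\{\text{lenses}\to Ext(\eta)\}=5$. As the six lenses are partitioned between the two sides, summing gives $d_0+d_\infty=4$; by the rotational symmetry of the previous step the number of lenses sent to $Int(\eta)$ lies in $\{0,3,6\}$, while $d_0,d_\infty\ge 1$ (both maps are onto) forces that number to be $3$, whence $d_\infty=d_0=2$. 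I expect the main obstacle to be the degree-$1$ statement for the lenses: one must be certain that the three curves intersect exactly at the roots (so that the complement has precisely this eight-region form) and that the roots, though critical for $R_0$, lie on the \emph{boundary} of the lenses and create no hidden ramification. This is exactly why I pass through ``proper local homeomorphism onto a simply connected target'' rather than a naive boundary Riemann--Hurwitz count, in which the corners at the roots would have to be treated separately.
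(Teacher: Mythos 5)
Your proof is correct, but it takes a genuinely different route from the paper's. The paper argues by a direct preimage count at a single well-chosen point: the finite zeros of $R_0$ are the three simple points $-\zeta^j\sqrt[3]{1/5}$, all lying in $Int(\eta)$, so the only preimage of $z=0$ in $Ext(\eta_{\infty})$ is $z=\infty$; since $\infty\mapsto 0$ with local degree $2$, the proper map $R_0\colon Ext(\eta_{\infty})\to Int(\eta)$ has degree $2$, and the map $R_0\colon Int(\eta_0)\to Ext(\eta)$ is handled the same way by counting poles. You avoid locating zeros and poles altogether: you decompose the sphere into the eight faces of the graph $\eta\cup\eta_0\cup\eta_{\infty}$ via Euler's formula, show each of the six lenses maps with degree $1$ (no critical points there, simply connected target), and then combine the global count $d_{\infty}+d_0+6=2\cdot 5$ with rotational symmetry and surjectivity to force $d_{\infty}=d_0=2$. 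The paper's approach buys brevity --- one explicit computation of the zeros of the numerator settles everything; yours buys robustness, since it needs no explicit root-finding and would transfer to settings where the finite zeros and poles are harder to place relative to $\eta$, at the price of justifying the combinatorics of the face decomposition (in particular that the three curves meet only at the roots $\zeta^j$, which, as you note, follows because every point of $R_0^{-1}(\eta)$ other than the $\zeta^j$ is a regular point). The properness arguments in the two proofs are essentially the same.
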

\proof 
 The open  set  $Ext(\eta_{\infty})$ contains no other preimage of $z=0$  than $z=\infty$. Indeed, $z=0$  has three preimages under $R_0$  other than $z=\infty$, which are given by $-\zeta^j\sqrt[3]{1/5}$   for $j=0,1,2$ and are contained in $Int(\eta)$. It follows that  $R_0:Ext(\eta_{\infty}) \rightarrow Int(\eta)$ is  a proper map. The degree of this proper map is 2 since $z=\infty$ is mapped 2 to 1 onto $z=0$ under $R_0$.  Analogously, $R_0: Int(\eta_0)\rightarrow Ext(\eta)$ is a proper map of degree 2.
\endproof
\begin{figure}[hbt!]
\centering
  \setlength{\unitlength}{250pt}%
 \begin{picture}(1,1)%
    \put(0,0){\includegraphics[width=\unitlength,page=1]{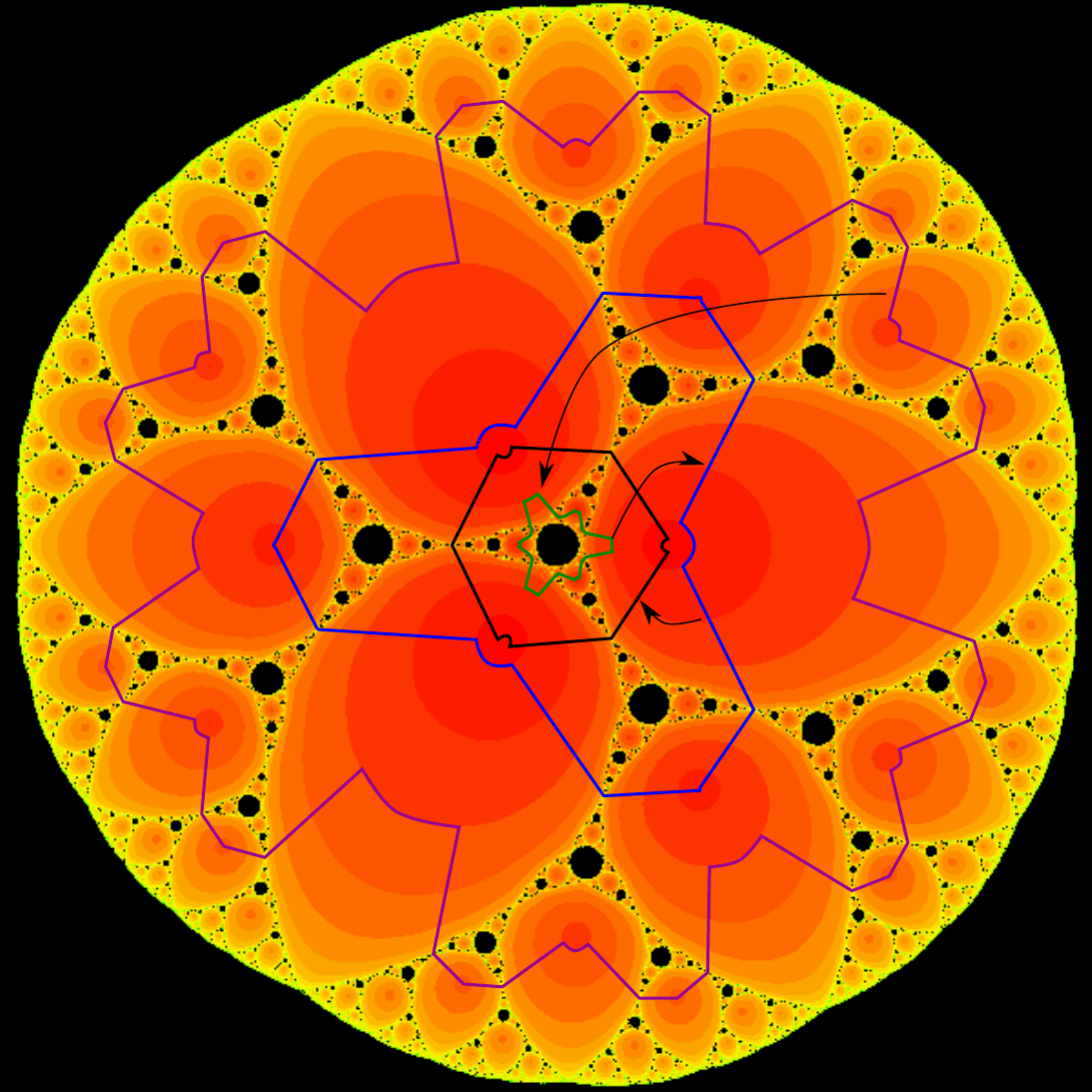}}%
   \put(0.48,0.385){\color[rgb]{0,0,0}\makebox(0,0)[lt]{\smash{\begin{tabular}[t]{l}$\gamma_0$\end{tabular}}}}%
    \put(0.40272002,0.37150467){\color[rgb]{0,0,1}\makebox(0,0)[lt]{\smash{\begin{tabular}[t]{l}$\gamma_1$\end{tabular}}}}%
    \put(0.45,0.435){\color[rgb]{0,0.54509804,0}\makebox(0,0)[lt]{\smash{\begin{tabular}[t]{l}$\gamma_2$\end{tabular}}}}%
    \put(0.33295981,0.22){\color[rgb]{0.58823529,0,0.58823529}\makebox(0,0)[lt]{\smash{\begin{tabular}[t]{l}$\gamma_3$\end{tabular}}}}%
  \end{picture}%
\caption{\small Configuration of the dynamics of the curve $\gamma_0$ and its preimages up to $\gamma_3$. the map $R_0$ maps each curve  $\gamma_k$  onto $\gamma_{k-1}$  with degree 2  for $k=1, \ldots, 4$.}
\label{fig:gamma}
\end{figure}

The curve  $\eta$ and its preimages  are well defined for any parameter $a$ close enough to $0$ (see next section). We want to use them to perform a cut and paste surgery (see \cite{BF}) to relate the dynamics of $R_a$ with the one of $z^4+\lambda/z^2$. However, these curves always intersect at the roots of the unity. To avoid this problem we now introduce a modified version of $\eta$ (see Figure~\ref{fig:gamma}). This modified version uses the equipotentials  \newr{given by} the Böttcher coordinates in the immediate basins of attraction of the third roots of the unity.

\begin{definition}\label{def:gamma0}
Fix $0<l_0<1$. We define $\gamma_0$ to be the simple closed curve obtained by cutting $\eta$ near the roots of the unity by the equipotentials of level $l_0$ and joining the cut points through these equipotentials  so that $Int(\gamma_0)$ is an open neighbourhood of $z=0$ not containing the roots of the unity.
\end{definition}

By construction $\gamma_0$ is invariant with respect to rotation by a third root of the unity. Notice that $\gamma_0$ depends on the choice of the level $l_0$. Notice also that the rotation with respect to a third root of the unity maps the equipotentials of level $l_0$ amongst themselves. We can now use the maps $R_0|_{Ext(\eta_{\infty})}$ and $R_0|_{Int(\eta_0)}$ (Lemma~\ref{lem:confD}) to take preimages of $\gamma_0$ (see Figure~\ref{fig:gamma}).

\begin{lemma}\label{lem:defpreimgamma}
Let $\gamma_1$ be the preimage of $\gamma_0$ contained in $\overline{ Ext(\eta_{\infty}) }$, $\gamma_{2}$ be the preimage of $\gamma_1$ contained in $Int(\eta_0)$,  $\gamma_{3}$ be the preimage of $\gamma_2$ contained in $Ext(\eta_{\infty}) $, and $\gamma_{4}$ be the preimage of $\gamma_3$ contained in $Int(\eta_0)$. Then the curves $\gamma_k$ for $k=1, \ldots, 4$ are simple closed curves which are invariant with respect to rotation by a third root of the unity, surround the origin,  and $R_0: \gamma_k \to \gamma_{k-1}$ has degree 2 for $k=1, \ldots, 4$.  Moreover, we have the inclusions $\gamma_4\subset Int(\gamma_2)$,  $\gamma_2\subset Int(\gamma_0)$, $\gamma_0\subset Int(\gamma_1)$, and $\gamma_1\subset Int(\gamma_{3})$.
\end{lemma}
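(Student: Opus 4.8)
The plan is to construct the four curves one at a time by pulling back along the two degree‑two proper maps provided by Lemma~\ref{lem:confD}, and then to extract the nesting from the way a degree‑two branched cover reverses inclusions. First I would record the local shape of those two maps. On the sphere $Ext(\eta_\infty)$, $Int(\eta_0)$, $Int(\eta)$ and $Ext(\eta)$ are all topological disks, so by Riemann--Hurwitz each of the proper maps $R_0\colon Ext(\eta_\infty)\to Int(\eta)$ and $R_0\colon Int(\eta_0)\to Ext(\eta)$ is a branched cover of disks with exactly one critical point: namely $z=\infty$ over the critical value $z=0$ for the first, and $z=0$ over the critical value $z=\infty$ for the second. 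The elementary fact I would then use repeatedly is that, for a degree‑two branched cover of disks with a single branch point $p\mapsto q$, the full preimage of a Jordan curve $C$ contained in the target is a single Jordan curve surrounding $p$ and mapped two‑to‑one onto $C$ whenever $C$ separates $q$ from the boundary of the target disk, and splits into two Jordan curves otherwise.

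Second I would run the induction that simultaneously produces the curves and their basic properties. The seed is $\gamma_0$, which by Definition~\ref{def:gamma0} lies in $\overline{Int(\eta)}$, avoids the roots, and surrounds $z=0$. The inductive invariant is that $\gamma_{k-1}$ is a Jordan curve surrounding $z=0$ lying in $Int(\eta)$ for $k$ odd and in $Ext(\eta)$ for $k$ even, which in each case separates the relevant critical value ($0$, resp.\ $\infty$) from the boundary of the corresponding target disk. The region containments $Ext(\eta_\infty)\subset Ext(\eta)$ and $Int(\eta_0)\subset Int(\eta)$ make the alternation automatic, so the hypothesis of the branched‑cover fact holds at every step; this yields that each $\gamma_k$ is a single Jordan curve, mapped with degree two onto $\gamma_{k-1}$, and again surrounding $z=0$. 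Invariance under $z\mapsto\zeta z$ follows because $R_0$ commutes with this rotation (as $R_0(\xi z)=\xi R_0(z)$ for $\xi^3=1$) and the region in which the preimage is selected is rotation invariant, so the unique component selected there must be rotation invariant as well.

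Third, for the nesting I would isolate two seed inclusions and propagate the remaining two formally. The inclusion $\gamma_0\subset Int(\gamma_1)$ is immediate: $\gamma_1\subset Ext(\eta_\infty)$ surrounds $\infty$, hence $\eta_\infty\subset Int(\gamma_1)$, and then $\gamma_0\subset\overline{Int(\eta)}\subset Int(\eta_\infty)\subset Int(\gamma_1)$, the curves being disjoint because $\gamma_0\subset\overline{Int(\eta)}$ while $\gamma_1\subset\overline{Ext(\eta_\infty)}$ and both avoid the roots. The second seed is $\gamma_2\subset Int(\gamma_0)$, which I would obtain from $\gamma_2\subset Int(\eta_0)$ together with a comparison of $\eta_0$ and $\gamma_0$. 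With these two inclusions in hand the rest are purely formal: because the branch value $0$ sits in the bounded side of $\gamma_0,\gamma_2$ while the branch point $\infty$ sits in the unbounded side of $\gamma_1,\gamma_3$, the first proper map reverses inclusions, so pulling $\gamma_2\subset Int(\gamma_0)$ back gives $\gamma_1\subset Int(\gamma_3)$; pulling $\gamma_1\subset Int(\gamma_3)$ back by the second proper map gives, in the same way, $\gamma_4\subset Int(\gamma_2)$.

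The main obstacle is the seed inclusion $\gamma_2\subset Int(\gamma_0)$, the only genuinely geometric step. Near each root $\zeta^j$ all three curves $\eta,\eta_0,\eta_\infty$ are pinched together, and $\gamma_0$ has been bent inward there along an equipotential of level $l_0$, so one must verify that the preimage $\gamma_2$ (which hugs the inside of $\eta_0$) does not leak out of $Int(\gamma_0)$ through these pinch points. I expect this to be settled by choosing the level $l_0$ appropriately and by a careful local analysis of the six rays of angles $0,\tfrac16,\tfrac13,\tfrac12,\tfrac23,\tfrac56$ emanating from each root; the remaining nesting and regularity assertions then follow formally from the branched‑cover dictionary recorded above.
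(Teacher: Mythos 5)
Your proposal follows essentially the same route as the paper: everything is produced by pulling back through the two degree--two proper maps of Lemma~\ref{lem:confD}. The paper's own proof is only a brief gesture at those maps, so your write-up is considerably more detailed; the branched-cover dictionary (one branch point by Riemann--Hurwitz, single preimage curve iff the target curve separates the critical value from the boundary), the induction, the equivariance argument for the $\mathbb{U}$-symmetry, and the formal propagation of the nesting from two seed inclusions are all correct. The one step you flag as an unresolved obstacle, $\gamma_2\subset Int(\gamma_0)$, does go through and requires no special choice of $l_0$ (which is important, since Definition~\ref{def:gamma0} fixes $l_0$ arbitrarily): in the B\"ottcher coordinate of $A_0^*(\zeta^j)$ the map is $w\mapsto w^3$, so the local piece of $\gamma_1$ consists of the rays of angles $1/6,1/3$ truncated at level $l_0^{1/3}$ and joined by an equipotential arc, and the local piece of $\gamma_2$ consists of the rays of angles $13/18$ and $7/9$ (the preimages of $1/6,1/3$ lying in the sector $(2/3,5/6)$ cut out by $\eta_0$ on the $Int(\eta)$ side) truncated at level $l_0^{1/9}$. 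Since $13/18,\,7/9\in(1/2,1)$ and $l_0^{1/9}>l_0$, this piece stays inside the sector forming $Int(\eta)$ and outside the half-disk of level $l_0$ that is removed from $Int(\eta)$ to form $Int(\gamma_0)$; away from the roots one simply has $\gamma_2\subset Int(\eta_0)\subset Int(\eta)$ while avoiding those half-disks. So the only genuinely geometric seed inclusion holds for every $0<l_0<1$, and the rest of your argument closes exactly as you describe.
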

\proof
These curves are well defined by the dynamics of $R_0|_{Ext(\eta_{\infty})}$ and $R_0|_{Int(\eta_0)}$ (see Lemma~\ref{lem:confD}). Their properties also come from the dynamics of $R_0|_{Ext(\eta_{\infty})}$ and $R_0|_{Int(\eta_0)}$. Notice that $\gamma_1$ coincides with $\eta_{\infty}$ except at the preimages of the subintervals replaced by equipotential segments.
\endproof

The next remark describes properties of  the curves $\gamma_1$, $\gamma_2$ and $\gamma_3$ which will be used in the next section. They follow from the dynamics of $R_0$ (Lemma~\ref{lem:confD}).

\begin{remark}\label{rem:preimgamma} All preimages of $\gamma_2$ under $R_0$, other than $\gamma_3$, are compactly contained in $Int(\gamma_3)$. Moreover, the open set $Int(\gamma_1)$ contains all preimages of $z=\infty$ under $R_0$.
\end{remark}

 We continue with a lemma showing that the Fatou components of $R_0$ are quasidisks.

\begin{lemma}\label{lem:boundary0}
 $\partial A_0^*(\infty)$, $\partial A_0^*(0)$ and $\partial A_0^*(\zeta^j)$ for $j=0,1,2$  are quasicircles.
\end{lemma}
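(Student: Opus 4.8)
The plan is to deduce everything from the hyperbolicity of $R_0$. First I would record that $R_0$ has no free critical points: as noted at the start of \S\ref{sec:R0}, the critical points of $R_0$ are exactly the three roots of unity $\zeta^j$ (super-attracting fixed points of local degree $3$) together with $0$ and $\infty$, which form a super-attracting $2$-cycle. Consequently the postcritical set is a finite subset of the Fatou set, disjoint from $\mathcal J(R_0)$, so $R_0$ is hyperbolic and hence uniformly expanding on a neighbourhood of $\mathcal J(R_0)$ in a suitable conformal metric. In particular every Fatou-component boundary is locally connected.

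Next I would equip each of the five components with a B\"ottcher coordinate. For $A_0^*(\zeta^j)$ this is already available: the basin contains no free critical point, so the B\"ottcher map $\phi_j\colon A_0^*(\zeta^j)\to\mathbb D$ conjugates $R_0$ to $w\mapsto w^3$. For $A_0^*(0)$ and $A_0^*(\infty)$ I would pass to $R_0^2$: since $0\mapsto\infty\mapsto 0$ each with local degree $2$, the point $0$ (resp. $\infty$) is a super-attracting fixed point of $R_0^2$ of local degree $4$, and a Riemann--Hurwitz count on the proper degree-$4$ self-map $R_0^2$ of the corresponding disk shows that its immediate basin contains no other critical point; thus $R_0^2$ is B\"ottcher-conjugate to $w\mapsto w^4$ there. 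By local connectivity each such B\"ottcher map extends continuously to $\overline{\mathbb D}$, giving a continuous parametrisation of the boundary by $\partial\mathbb D$.

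Then I would verify that each boundary is a Jordan curve, i.e. that this parametrisation is injective. It can only fail if two external rays of the component land at a common point; I would exclude internal pinching using the landing combinatorics of Lemma~\ref{lem:fixed0} together with the rotational and complex-conjugation symmetries of $R_0$, exactly as in the proof of that lemma. Finally, to upgrade each Jordan boundary to a quasicircle I would invoke the expansion from the first step: because $R_0$ is uniformly expanding along $\mathcal J(R_0)$ while $w\mapsto w^{d}$ expands $\partial\mathbb D$ uniformly, the boundary homeomorphism $\partial\mathbb D\to\partial U$ is quasisymmetric, which is precisely the bounded-turning condition characterising quasicircles.

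The main obstacle is this last step: turning the hyperbolic expansion into a precise quasisymmetry (equivalently, bounded-turning) estimate, together with the clean verification that the boundaries do not self-touch. Once hyperbolicity is established both are standard, and one may alternatively cite the general fact that Jordan-curve boundaries of periodic Fatou components of hyperbolic rational maps are quasicircles.
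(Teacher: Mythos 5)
Your strategy (hyperbolicity, B\"ottcher parametrisation of each basin, Jordan property, then expansion implies quasisymmetry) is a genuinely different route from the paper's, and it could in principle be completed, but as written it has two concrete gaps. First, the Riemann--Hurwitz count you invoke for $A_0^*(0)$ is circular: to conclude that the immediate basin contains no critical point of $R_0^2$ other than $z=0$ you must already know that $R_0^2$ restricted to $A_0^*(0)$ is a proper self-map of degree exactly $4$ (and that the basin is a disk), and this is precisely what fails if any of the remaining critical points of $R_0^2$ lie in the basin. Concretely, $R_0^2$ has critical points beyond $0$, $\infty$ and the roots of unity, namely the three simple poles $-\zeta^j\sqrt[3]{5}$ (which map to the critical point $\infty$) and the three finite preimages $-\zeta^j\sqrt[3]{1/5}$ of $0$; if some of these lay in $A_0^*(0)$ or $A_0^*(\infty)$, the restricted degree would simply be larger and Riemann--Hurwitz would be consistent with their presence. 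One must actually locate these six points away from the immediate basins, and that is exactly the content the paper extracts from the curves $\eta$, $\eta_0$, $\eta_\infty$ in Lemma~\ref{lem:confD} and Remark~\ref{rem:preimgamma}; your proposal has no substitute for it.

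Second, the Jordan-curve step is asserted rather than proved, and it does not follow ``exactly as'' in Lemma~\ref{lem:fixed0}: that lemma only shows that the \emph{two fixed} rays of each root basin land at distinct points, whereas injectivity of the extended B\"ottcher parametrisation requires that no two rays of \emph{any} pair of angles land together; moreover for $A_0^*(0)$ and $A_0^*(\infty)$ the relevant dynamics is $R_0^2$ with angle-quadrupling (fixed angles $0,1/3,2/3$), so even the fixed-ray analysis has to be redone there. Ruling out all identifications is a standard but substantive argument that your sketch skips. It is worth noting how the paper's proof sidesteps local connectivity, boundary extension and injectivity all at once: it constructs a degree-$4$ polynomial-like restriction $(R_0^2; Int(\gamma_2), Int(\gamma_0))$ whose Julia set is $\partial A_0^*(0)$, and a degree-$3$ polynomial-like map $(R_0;U,V)$ around $z=1$ bounded by rays of angles $1/4$ and $3/4$ and equipotentials; straightening makes these quasiconformally conjugate to $z^4$ and $z^3$, so the boundaries are quasiconformal images of the round circle and are quasicircles by definition. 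If you close the two gaps above your hyperbolicity route works, but the straightening argument is shorter and is what the curves built in \S\ref{sec:R0} were designed for.
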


\begin{proof}
We firstly consider the 2-cycle   $\{0,\infty\}$ of $R_0$. The triple $(R^2_0; Int(\gamma_2), Int(\gamma_0))$ is a polynomial-like map of degree 4 whose Julia set coincides with the boundary of $A_{0}^*(0)$. Notice that $z=0$ is a super-attracting fixed point of local degree 4 under $R^2$, so the polynomial-like map $(R^2_0, Int(\gamma_2), Int(\gamma_0))$ is hybrid equivalent to a polynomial of the form $b z^4$, $b\in\com$ and the result follows. 

We secondly consider the super-attracting fixed points located at $\zeta^j$ for $j=0,1,2$.  By symmetry, it is enough to prove the result for  $\partial A_0^*(1)$.  We construct a curve $\gamma$ which passes through the external rays of angles $1/4$ and $3/4$ in $\partial A_0^*(\zeta^1)$ and $\partial A_0^*(\zeta^2)$. We continue these curves in the immediate basins of attraction of $0$ and $\infty$ by following appropriate external rays and cutting by equipotentials so that it surrounds $z=1$  (see Figure~\ref{fig:pollyke1}). Moreover, $\gamma$ is modified to follow equipotentials near $\zeta^1$ and $\zeta^2$ so that it does not contain any of the fixed points (see Figure~\ref{fig:pollyke1}). Let $V$ be the domain bounded by $\gamma$ (which contains $z=1$). It is not difficult to show that, if the equipotentials in the basins of $z=0$ and $z=\infty$ are chosen appropriately, the connected component $U$ of $R_0^{-1}(V)$  which contains $\zeta^0=1$ is simply connected, is compactly contained in $V$ and is mapped with degree 3 onto $V$ under $R_0$. It follows that the triple  $(R_0; U, V)$ is a degree 3 polynomial-like mapping. Since $\zeta^0=1$ is a super-attracting fixed point of local degree 3, it follows that $R_0|_U$ is quasiconformally conjugate to $z^3$. This quasiconformal conjugacy maps the immediate basin of attraction of $z=1$ to the unit disk. Hence, we can conclude that $\partial A_0^*(1)$ is a quasicircle (see Figure~\ref{fig:pollyke1}).
\end{proof}

	\begin{figure}[hbt!]
		\centering
		\setlength{\unitlength}{250pt}%
	\begin{picture}(1,1)%
		\setlength\tabcolsep{0pt}%
		\put(0,0){\includegraphics[width=\unitlength,page=1]{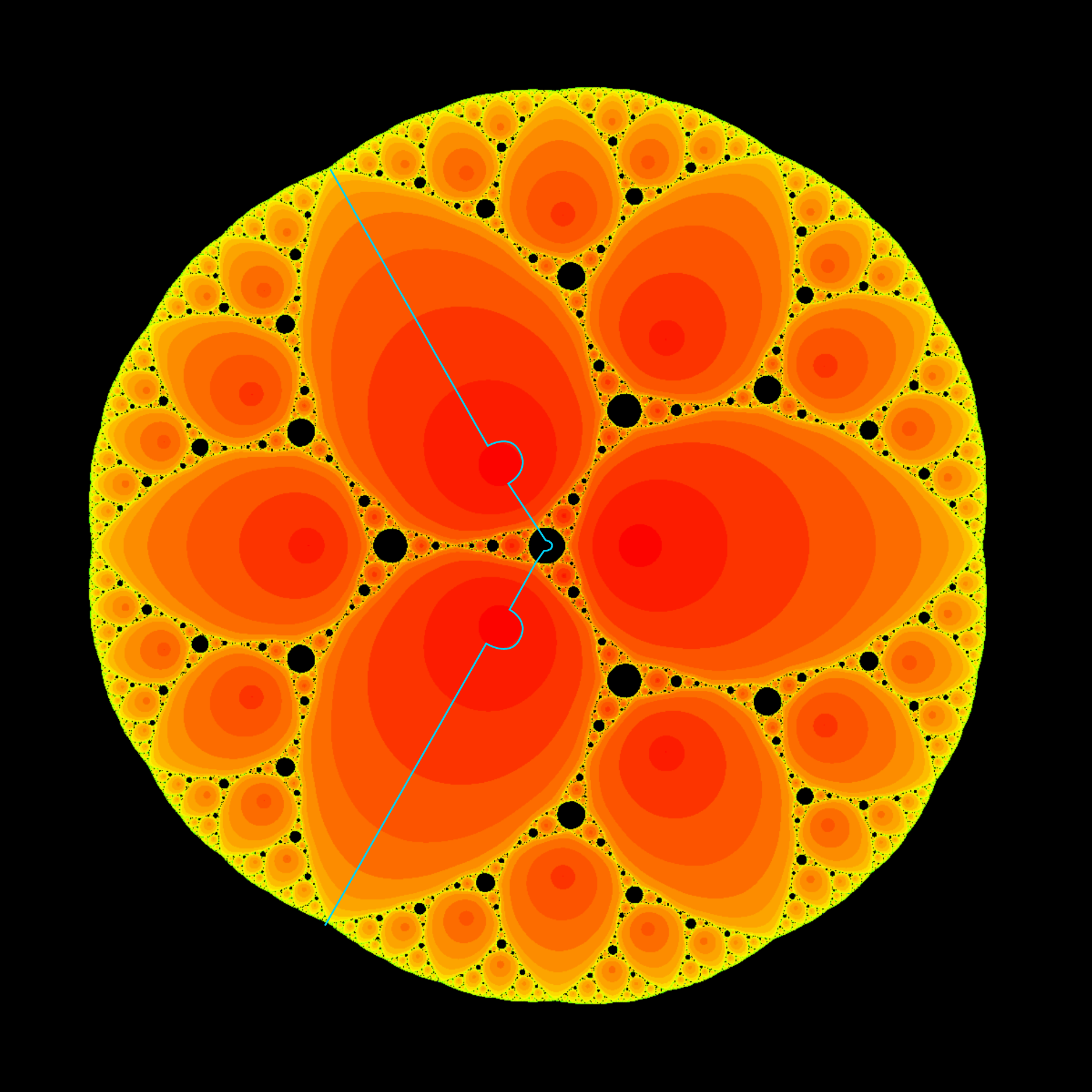}}%
		\put(0.37,0.72488636){\color[rgb]{0,0,0}\makebox(0,0)[lt]{\smash{\begin{tabular}[t]{l}$\frac{1}{4}$\\\end{tabular}}}}%
		\put(0.45,0.5178051){\color[rgb]{0,0,0}\makebox(0,0)[lt]{\smash{\begin{tabular}[t]{l}$\frac{3}{4}$\end{tabular}}}}%
		\put(0.4,0.3){\color[rgb]{0,0,0}\makebox(0,0)[lt]{\smash{\begin{tabular}[t]{l}$\gamma$\end{tabular}}}}%
		\put(0,0){\includegraphics[width=\unitlength,page=2]{ray2.pdf}}%
		\put(0.625,0.3){\color[rgb]{0,0,0}\makebox(0,0)[lt]{\smash{\begin{tabular}[t]{l}$\gamma^{-1}$\end{tabular}}}}%
		\put(0,0){\includegraphics[width=\unitlength,page=2]{ray2.pdf}}%
	\end{picture}%
\caption{\small Curves that delimit the polynomial-like mapping around $z=1$. }
\label{fig:pollyke1}
\end{figure}

We finish this section  by showing that the curve $\gamma_0$ is a quasicircle. Since $R_0$ has no \textit{free} critical points (all critical points of $R_0$ are fixed points), it follows that all preimages of \newr{$\gamma_0$ under} $R_0$ are also quasicircles.

\begin{corollary}\label{co:quasicircle0}
	The curve $\gamma_0$ is a quasicircle.
	\end{corollary}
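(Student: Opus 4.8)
The plan is to show that $\gamma_0$ is a piecewise-analytic Jordan curve with finitely many corners, none of which is a cusp, and then to invoke the standard characterisation of quasicircles via Ahlfors' three-point condition (bounded turning): a Jordan curve that is a finite union of quasi-arcs meeting at corners whose interior angles are bounded away from $0$ and $2\pi$ satisfies the three-point condition and is therefore a quasicircle.

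First I would identify the arcs composing $\gamma_0$. By Definition~\ref{def:gamma0} there are two types of pieces. The pieces near each root of unity $\zeta^j$ are arcs of the equipotential of level $l_0<1$ inside $A_0^*(\zeta^j)$; since $l_0<1$ these lie strictly inside the basin, where the B\"ottcher coordinate is conformal, so they are genuine analytic arcs. The remaining pieces are the sub-arcs of the fixed rays of angles $0$ and $1/2$ running from the level-$l_0$ equipotential out to their landing points, which by Lemma~\ref{lem:fixed0} are the repelling fixed points $x_{0,j}$; each such sub-arc is analytic in the interior of the basin and lands at $x_{0,j}$.

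Next I would rule out cusps at the corners, of which there are two kinds. The corners where a fixed ray meets the level-$l_0$ equipotential occur at interior points of the basins: in the B\"ottcher coordinate rays are radial and equipotentials are round circles, so they meet orthogonally, and by conformality the angle in the $z$-plane is again $\pi/2$. The delicate corners are the landing points $x_{0,j}$, where two fixed rays coming from two distinct basins (Lemma~\ref{lem:fixed0}) meet. Here I would use the symmetry $R_0(\zeta z)=\zeta R_0(z)$ to reduce to the real fixed point $x_{0,0}=-1$, together with the fact that $R_0$ has real coefficients, so that $i(z)=\bar z$ conjugates $R_0$ to itself and the two rays landing at $-1$ are complex conjugate. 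Linearising $R_0$ at $-1$ by a Koenigs coordinate $\psi$ (chosen real-symmetric), each fixed ray becomes a forward-invariant germ landing at $0$ for $w\mapsto\rho w$ with $\rho=R_0'(-1)\in\R$, $|\rho|>1$. The existence of an invariant ray-germ landing at $0$ forces $\rho>0$, and for $\rho>0$ such a germ is a straight radial segment with a well-defined tangent. The two resulting radial segments are distinct and conjugate, hence meet either transversally or collinearly; in both cases the interior angle lies in $(0,2\pi)$, so there is no cusp. Pulling back by the conformal $\psi^{-1}$ preserves this, and rotating by $\zeta,\zeta^2$ transports the conclusion to $x_{0,1},x_{0,2}$.

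Having shown that $\gamma_0$ is a finite union of analytic quasi-arcs meeting at finitely many non-cusp corners, I would conclude that $\gamma_0$ is a quasicircle. The main obstacle is precisely the control of the angle at the repelling fixed points $x_{0,j}$, i.e.\ ensuring that the two landing rays are transverse and form no cusp; this is where the reality of $R_0$, the conjugation symmetry, and the Koenigs linearisation are used. As noted before the statement, once $\gamma_0$ is a quasicircle its preimages $\gamma_k$ are quasicircles as well, since $R_0$ has no free critical points and so acts as a local holomorphic homeomorphism along these curves, preserving the three-point condition.
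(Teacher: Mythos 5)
Your decomposition of $\gamma_0$ (analytic equipotential and ray pieces, orthogonal corners where rays meet equipotentials, the only danger being at the landing points $x_{0,j}$) matches the paper's, and the reduction to $x_{0,0}=-1$ via the rotational and real symmetries is fine. The gap is in the key step at the repelling fixed point: it is \emph{not} true that a forward-invariant germ landing at $0$ for $w\mapsto\rho w$ with $\rho>0$ ``is a straight radial segment with a well-defined tangent.'' Such a germ is determined by an arbitrary fundamental arc $\alpha$ joining some $w_0$ to $\rho w_0$ together with all its rescalings $\rho^n\alpha$; for example, for $\rho=e^{2\pi}$ the curve $\{re^{i\sin(\log r)}:r>0\}$ is invariant and lands at $0$ but has no tangent there, since its argument oscillates. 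So the Koenigs picture alone gives neither straightness nor a tangent, and the sentence ``the two resulting radial segments \dots meet either transversally or collinearly'' rests on this false premise. Similarly, ``the existence of an invariant ray-germ landing at $0$ forces $\rho>0$'' is unjustified as stated (zigzag-type invariant arcs landing at $0$ exist for negative real multipliers); what does force $\rho>0$ here is that the ray germ lies in the open upper half-plane while a real-symmetric linearizer with $\rho<0$ would exchange the two half-planes --- or simply the computation $R_0'(-1)=5/2$.

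The step can be repaired without producing a tangent. The ray landing at $-1$ from $A_0^*(\zeta)$ lies in a basin disjoint from $\mathbb{R}$, so in the real-symmetric Koenigs coordinate its fundamental arc $\alpha$ is a compact analytic arc in the open upper half-plane, whence $\arg\alpha\subset[\varepsilon,\pi-\varepsilon]$ for some $\varepsilon>0$; since $\rho>0$, every $\rho^n\alpha$ lies in the same sector, so the ray approaches $0$ inside a sector bounded away from $\mathbb{R}$ and its conjugate inside the reflected sector. Combining this sector confinement with the fact that the unions of boundedly many consecutive fundamental arcs are rescaled copies of a fixed compact analytic arc (hence quasi-arcs with a uniform constant), one verifies Ahlfors' three-point condition for the union of the two rays through $-1$; no cusp can form. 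With that substitution your argument becomes a legitimate alternative to the paper's, which instead deduces the positive landing angle from Lemma~\ref{lem:boundary0} (the boundaries $\partial A_0^*(\zeta^j)$ are quasicircles, obtained from a degree~$3$ polynomial-like restriction quasiconformally conjugate to $z^3$). Either route controls the landing geometry, but as written your proof asserts a property of invariant curves that they do not possess, so this step needs the above (or an equivalent) fix.
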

\proof
The curve $\gamma_0$ is built by a finite union of analytic curves (dynamical rays of angles 0 and 1/2 and equipotentials   \newr{in} the basins of attraction of the third roots of the unity). In order to proof that $\gamma_0$ is a quasicircle it is enough to show that the analytic curves are joined forming positive angles. This is trivially true at the points where dynamical rays  \newr{connect} with equipotentials. The only problems could happen at the \newr{crossing}  of the dynamical rays at the repelling fixed points $x_{0,j}$. Since $\partial A^*_0(\zeta^j)$, $j = 0,1,2$, are quasicircles (Lemma~\ref{lem:boundary0}),  it follows that the dynamical rays land at the fixed points $x_{0,j}$, where $j = 0,1,2$, with a positive angle.

\endproof

\section{Dynamics of the map $R_a$}\label{sec:Dynamics:Ra}

In the next section we present a surgery construction relating the rational map   $R^2_a$ \eqref{eq:Ra} and the McMullen map  $M_{\lambda}$ \eqref{eq:Mcmullen}. This construction is based on the fact that the curves $\gamma_0$, $\gamma_1$, $\gamma_2$,  $\gamma_3$, and $\gamma_4$ (see Lemma~\ref{lem:defpreimgamma}) can be defined continuously for $|a|$ small enough keeping the same dynamics. We shall denote these continued curves by $\gamma_0(a)$, $\gamma_1(a)$, $\gamma_2(a)$, $\gamma_3(a)$, and $\gamma_4(a)$. 
The goal of the next lemmas is to introduce formally these curves. First, we introduce a lemma that allows us to locate the critical values which are the images of the critical points that appear for $a\neq 0$.

\begin{lemma}\label{lem:iterateanell}
	There exists a constant $\mathcal{C}_1>0$ that does not depend on $a$ such that if $|a|$ is small enough, $a\neq 0$, then $|R_a(z)|<\mathcal{C}_1|a|^{2/3}$ for all $z\in\mathbb{A}\left(\frac{1}{|a|^{1/3}}, \frac{3}{|a|^{1/3}}\right)$.
\end{lemma}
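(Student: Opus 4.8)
The plan is to exploit the natural scaling of the annulus by introducing the new variable $u:=az^3$. For $z\in\mathbb{A}\left(\frac{1}{|a|^{1/3}},\frac{3}{|a|^{1/3}}\right)$ we have $\frac{1}{|a|}<|z|^3<\frac{27}{|a|}$, hence $1<|u|<27$, so this substitution confines the ``large'' part of $R_a$ to a fixed bounded annulus that is independent of $a$. First I would rewrite numerator and denominator of $R_a$ in \eqref{eq:Ra} using $z^3=u/a$ and $z^6=u^2/a^2$; after clearing a common factor $1/a$ from top and bottom, a direct computation gives
$$R_a(z)=\frac{2u^2+15u+a(3-a-u)}{3z^2\bigl(u+a(5-a+u)\bigr)}.$$

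From here the estimate reduces to bounding the two pieces. For the numerator, since $|u|<27$ and $|a|$ is small, one has $|2u^2+15u+a(3-a-u)|\le 2\cdot 27^2+15\cdot 27+1=:K$, a constant independent of $a$. For the denominator the key observation is that the parenthesis stays bounded away from $0$: because $|u|>1$ we get $|u+a(5-a+u)|\ge |u|-|a|(5+|a|+27)\ge \tfrac12$ once $|a|$ is small enough. Combined with $|z|^2>|a|^{-2/3}$ (which follows from the inner radius $|z|>|a|^{-1/3}$), this yields $\bigl|3z^2\bigl(u+a(5-a+u)\bigr)\bigr|\ge \tfrac{3}{2}|a|^{-2/3}$. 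Putting the two bounds together gives
$$|R_a(z)|\le \frac{K}{\tfrac{3}{2}|a|^{-2/3}}=\frac{2K}{3}\,|a|^{2/3},$$
so the claim holds with $\mathcal{C}_1=\tfrac{2}{3}\bigl(2\cdot 27^2+15\cdot 27+1\bigr)$ (or any larger constant).

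The only point requiring genuine care---and the step I expect to be the real obstacle---is the lower bound on the denominator, namely that the factor $5-a+(1+a)z^3$ does not degenerate but remains comparable to $z^3\sim|a|^{-1}$ rather than dropping to lower order. This is exactly what the lower bound $|u|>1$ on the inner radius of the annulus secures: had the annulus reached down to $|z|$ of order strictly smaller than $|a|^{-1/3}$, the term $u$ could cancel against $a(5-a+u)$ and the argument would break. Everything else is a routine magnitude count, so once the rewriting in terms of $u$ is carried out the proof is immediate.
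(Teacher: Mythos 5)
Your proof is correct and takes essentially the same route as the paper: the paper rescales by writing $z=b/a^{1/3}$ with $1<|b|<3$ (so your $u=az^3$ is just $b^3$), reduces to a fixed compact annulus, and reads off $\left|R_a\left(b/a^{1/3}\right)\right|=\left|\frac{15+2b^3}{3b^2}\right||a|^{2/3}+o(|a|^{2/3})$, taking $\mathcal{C}_1=1+\max_{1\leq|b|\leq 3}\left|\frac{15+2b^3}{3b^2}\right|$. The only difference is cosmetic: you carry explicit constants through the magnitude count (including the lower bound $|u+a(5-a+u)|\geq\tfrac12$) instead of the paper's leading-order asymptotic, which if anything makes the uniformity in $a$ more transparent.
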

\begin{proof}
	
	Every point in $\mathbb{A}\left(\frac{1}{|a|^{1/3}}, \frac{3}{|a|^{1/3}}\right)$ can be written as $\frac{b}{a^{1/3}}$, where $1<|b|<3$.  We have

	$$R_a\left(\frac{b}{a^{1/3}}\right)=\frac{\frac{2b^6}{a}+\frac{15 b^3}{a}-b^3+3-a}{\frac{3b^2}{a^{2/3}}(5-a+\frac{b^3}{a}+b^3)}
	=\frac{(15b^3+2b^6)a^{2/3}+(3-b^3)a^{5/3}-a^{8/3}}{3b^5 +3b^2(5+b^3 -a)a}.$$
	Therefore, we have
	$$\left|R_a\left(\frac{b}{a^{1/3}}\right)\right|=\left|\frac{15+2b^3}{3b^2}\right||a|^{2/3} + o(|a|^{2/3}).$$
	To finish the proof it is enough to take 
	$$\mathcal{C}_1=1+\max_{1\leq|b|\leq 3}\left|\frac{15+2b^3}{3b^2}\right|.$$
\end{proof}

The curve $\gamma_0$ is defined using the dynamical rays of angles $0$ and $1/2$ of $\zeta^j$ cut through the equipotentials of level $\l_0$  so that $\zeta^j\notin Int(\gamma_0)$. The following lemma establishes the necessary conditions so that this construction can be repeated for $a\neq 0$ and serves as definition of $\gamma_0(a)$. 

\begin{lemma}\label{lem:contgamma0}
There exists $\Lambda_0\subset \com$  an open and simply connected set of parameters containing $a=0$ such that the following hold:
\begin{enumerate}[i)]
	\item The critical points $c_{a,j}$ (see  \eqref{eq:crit}) do not lie on the external rays of angles 0 or $1/2$ or the equipotentials of level $l_0$ of the immediate basins of attraction of $\zeta^j$, $j=0,1,2$.
	\item The fixed points $x_{a,j}$ (see \eqref{eq:fixed}) are repelling. 
\end{enumerate}
Moreover, for $a\in \Lambda_0$,  the curve $\gamma_0$ admits a holomorphic motion   whose image is a Jordan curve $\gamma_0(a)$  formed by   the dynamical rays of angles 0 and $1/2$ and the equipotentials of level $l_0$. This curve   $\gamma_0(a)$ is symmetric with respect to action  of $\mathbb U$ and  is a quasicircle  as $\gamma_0$. 
\end{lemma}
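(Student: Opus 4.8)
The plan is to produce $\Lambda_0$ from two open conditions that already hold at $a=0$, and then transport the whole curve $\gamma_0$ by the Böttcher coordinates of the basins $A_a^*(\zeta^j)$, which vary holomorphically in $a$. First I would fix $\Lambda_0$. Condition (ii) is open and holds at $a=0$: since we use the determinacy $\sqrt[3]{-1}=-1$, the fixed points $x_{a,j}$ of \eqref{eq:fixed} are simple roots of the fixed-point equation and hence move holomorphically by the implicit function theorem, and their multipliers $R_a'(x_{a,j})$ depend holomorphically on $a$ with modulus $>1$ at $a=0$ (the $x_{0,j}$ are repelling by Lemma~\ref{lem:fixed0}); so they stay repelling on a neighbourhood of $0$. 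For condition (i), I would observe from \eqref{eq:crit} that the free critical points blow up, $|c_{a,j}|\sim 15^{1/3}|a|^{-1/3}\to\infty$ as $a\to 0$, whereas the relevant pieces of the rays of angles $0,1/2$ and of the level-$l_0$ equipotentials live in the bounded basins $A_a^*(\zeta^j)$ and stay in a fixed compact subset of $\C$ for $a$ small; hence the $c_{a,j}$ avoid them. Taking $\Lambda_0$ to be a small disk around $0$ on which both conditions hold makes it open, simply connected and containing $0$.

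Next I would build the holomorphic motion on the interior pieces. For $a\in\Lambda_0$ the finite basins $A_a^*(\zeta^j)$ contain no free critical point (the $c_{a,j}$ have escaped towards $\infty$), so the Böttcher coordinate $\phi_{a,j}:A_a^*(\zeta^j)\to\D$ is a conformal conjugacy between $R_a$ and $w\mapsto w^3$ depending holomorphically on $a$. Setting $H\!\left(a,\phi_{0,j}^{-1}(re^{2\pi i\theta})\right)=\phi_{a,j}^{-1}(re^{2\pi i\theta})$ on each piece of $\gamma_0$ lying in $\overline{A_0^*(\zeta^j)}$ moves the level-$l_0$ equipotential arcs and the interior parts of the rays of angles $0$ and $1/2$ holomorphically in $a$ and injectively in the point, with $H(0,z)=z$.

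The delicate step is the landing of the rays and the gluing. I would first upgrade Lemma~\ref{lem:boundary0} to $a\neq0$: the free critical points remain outside, so the degree-$3$ polynomial-like map around $\zeta^j$ persists as $(R_a;U_a,V_a)$ with super-attracting fixed point $\zeta^j$ and straightens to $z^3$, whence $\partial A_a^*(\zeta^j)$ is a quasicircle. Then $\phi_{a,j}$ extends to a homeomorphism $\overline{\D}\to\overline{A_a^*(\zeta^j)}$; the invariant rays of angles $0$ and $1/2$ land, their landing points are fixed, and by continuity they are the continuations $x_{a,j}$ of the $a=0$ landing points (which move holomorphically by condition (ii)). Since the angle-$0$ ray of $A_a^*(\zeta^k)$ and the angle-$1/2$ ray of $A_a^*(\zeta^{k'})$ still land at the common point $x_{a,j}$, the six ray tails and three equipotential arcs glue into a single Jordan curve $\gamma_0(a)=H(a,\gamma_0)$, and $a\mapsto H(a,z)$ is holomorphic for every $z\in\gamma_0$, including the landing points.

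Finally I would record the symmetry and regularity. The relation $R_a(\xi z)=\xi R_a(z)$ for $\xi\in\mathbb U$ makes the construction $\mathbb U$-equivariant (it permutes basins, rays, equipotentials and fixed points compatibly), so $H$ can be chosen $\mathbb U$-equivariant and $\xi\,\gamma_0(a)=\gamma_0(a)$. For the quasicircle property I would extend the holomorphic motion $H:\Lambda_0\times\gamma_0\to\wcom$ to a motion of the whole sphere (see \cite{BF}); each slice $H(a,\cdot)$ is then quasiconformal, and as $\gamma_0=\gamma_0(0)$ is a quasicircle by Corollary~\ref{co:quasicircle0}, its image $\gamma_0(a)$ is a quasicircle. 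Alternatively one repeats the angle estimate of Corollary~\ref{co:quasicircle0}, the analytic pieces meeting at positive angles trivially at the ray–equipotential junctions and at the $x_{a,j}$ because $\partial A_a^*(\zeta^j)$ is a quasicircle. I expect the main obstacle to be precisely the boundary behaviour of the motion: showing that the invariant rays still land at the repelling fixed points $x_{a,j}$ and that this landing varies continuously with $a$, which is where the persistence of the quasidisk (polynomial-like) structure of the basins is essential, the interior holomorphic dependence through Böttcher coordinates being routine.
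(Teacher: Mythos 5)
Your proposal is correct and follows essentially the same route as the paper: both treat conditions (i) and (ii) as open conditions holding near $a=0$, obtain the holomorphic motion of $\gamma_0$ from the holomorphic dependence of its defining pieces (equipotentials and rays via B\"ottcher coordinates, landing points via the persistence of the repelling fixed points $x_{a,j}$), and deduce the quasicircle property from Corollary~\ref{co:quasicircle0} together with the $\lambda$-Lemma. The paper's proof is far terser (it asserts the continuation ``follows directly from $i)$ and $ii)$'' and instead spends its effort noting that $a=\pm3$ lie outside $\Lambda_0$), so your explicit treatment of the ray landing and the gluing at the $x_{a,j}$ supplies details the paper leaves implicit rather than a different argument.
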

\proof
The fact that $\gamma_0(a)$ is a Jordan curve that is well defined by dynamical continuation of $\gamma_0$ follows directly from $i)$ and $ii)$. Since the curve $\gamma_0(a)$ is defined piecewise by dynamical objects that move holomorphically, it is a holomorphic motion of $\gamma_0$. Since $\gamma_0$ is a quasicircle (see Corollary~\ref{co:quasicircle0}) it follows from the $\lambda$-Lemma (see \cite{MSS}) that $\gamma_0(a)$ is also a quasicircle. 

Notice that the $x_{a,j}$ collide at $a=\pm3$ (see \eqref{eq:fixed}). However, those parameters do not belong to $\Lambda_0$. On the one hand, for $a=-3$ the fixed points $x_{-3,j}$ are parabolic. On the other hand,  $a=3$ is a singular parameter for which the degree of $R_a$ decreases to 4, the fixed points  $x_{3,j}$ and the critical points  $c_{3,j}$ collapse at $z=0$, and one of the fixed dynamical rays at the basin of attraction of each root of the unity lands at $z=\infty$ (the other one lands at $z=0$, which is a repelling fixed point for this singular parameter). This last claim is also satisfied for $|a-3|$ small enough,  so $a=3$ does not belong to $\partial \Lambda_0$. 
\endproof

\begin{remark}\label{rem:stabilityfixed}	The fixed points  $x_{a,j}$ are repelling for parameters in the complement of the closed disk of centre $a=-5$ and radius 2 (compare  \cite[Proposition 2.4]{CCV2}). 
\end{remark}

Once the curve $\gamma_0(a)$ is defined as a holomorphic motion of $\gamma_0$ over a set of parameters $\Lambda_0$ (Lemma~\ref{lem:contgamma0}), we can define recursively $\gamma_i(a)$ as a holomorphic motion of $\gamma_i$, $i=1,2,3,4$. In the next lemma we introduce these curves and describe their basic properties. 

\begin{lemma}\label{cont:gamma1234}
Define $\gamma_i(a)$, $i=1,2,3,4$,  recursively as follows. Let $\Lambda_{i}\subset \Lambda_{i-1}$ be an open simply connected set of parameters such that $\gamma_{i-1}(a)$ does not contain any critical value. Let $\gamma_i(a)$ to be the connected component of $R^{-1}_a(\gamma_{i-1}(a))$ which is a holomorphic motion of $\gamma_i(a)$. Then, the curves  $\gamma_i(a)$ satisfy the following properties:
\begin{enumerate}[i)]
	\item They are quasicircles and are symmetric with respect to rotation by a third root of the unity. 
	\item The curve $\gamma_i(a)$, $i=1,2,3,4$ is mapped 2 to 1 onto $\gamma_{i-1}(a)$ under $R_a$.
\end{enumerate}
Moreover, we have the inclusions $\gamma_4(a)\subset Int(\gamma_2(a))$,  $\gamma_2(a)\subset Int(\gamma_0(a))$, $\gamma_0(a)\subset Int(\gamma_1(a))$, and $\gamma_1(a)\subset Int(\gamma_{3}(a))$. 
\end{lemma}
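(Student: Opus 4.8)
The plan is to induct on $i$, lifting the Jordan curve $\gamma_{i-1}(a)$ through $R_a$, extracting the component that continues $\gamma_i$, and then reading off the four asserted properties from the $\lambda$-Lemma together with a connectedness argument.

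First I would check that the recursion is well posed. At $a=0$ the curve $\gamma_{i-1}$ avoids every critical value of $R_0$: these are the third roots of unity together with $0$ and $\infty$, none of which lies on the compact curve $\gamma_{i-1}\subset\C\setminus\{0\}$, which surrounds the origin and was built to avoid the roots of unity. For $a\neq 0$ small the only additional critical values are $v_{a,0},v_{a,1},v_{a,2}$, and by \eqref{eq:critvalue} they tend to $0$ as $a\to 0$; since $0\in Int(\gamma_{i-1}(a))$, for $|a|$ small they lie off $\gamma_{i-1}(a)$, which also keeps avoiding the (fixed) roots of unity and $\infty$. Hence an open simply connected $\Lambda_i\subset\Lambda_{i-1}$ containing $0$ on which $\gamma_{i-1}(a)$ carries no critical value exists. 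On $\Lambda_i$ the preimage $R_a^{-1}(\gamma_{i-1}(a))$ is a disjoint union of Jordan curves, each mapped by $R_a$ as an unbranched cover of $\gamma_{i-1}(a)$; the component continuing $\gamma_i$ stays in a bounded region of $\C$, where $R_a\to R_0$ uniformly and the free critical points $c_{a,j}$ of \eqref{eq:crit}, which escape to $\infty$ as $a\to 0$, are absent. There the local inverse branches of $R_a$ are holomorphic in $a$ across $a=0$, so following this component through $a=0$ defines $\gamma_i(a)$ as a holomorphic motion of $\gamma_i$.

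Properties (i) and (ii) then follow quickly. As $\gamma_i$ is a quasicircle (Corollary~\ref{co:quasicircle0}) and $\gamma_i(a)$ is a holomorphic motion of it, the $\lambda$-Lemma (\cite{MSS}) forces $\gamma_i(a)$ to be a quasicircle. For the $\mathbb{U}$-symmetry I argue inductively: $\gamma_0(a)$ is symmetric by Lemma~\ref{lem:contgamma0}, and since $R_a(\xi z)=\xi R_a(z)$ the rotated curve $\xi\cdot\gamma_i(a)$ is again a component of $R_a^{-1}(\gamma_{i-1}(a))$ agreeing with $\gamma_i$ at $a=0$; uniqueness of the holomorphic continuation of a component gives $\xi\cdot\gamma_i(a)=\gamma_i(a)$. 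Finally, $R_a\colon\gamma_i(a)\to\gamma_{i-1}(a)$ is an unbranched cover of a circle whose degree is a continuous, hence locally constant, $\Z$-valued function of $a$; on the connected set $\Lambda_i$ it equals its value $2$ at $a=0$ (Lemma~\ref{lem:defpreimgamma}).

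For the inclusions I would use that a holomorphic motion is jointly continuous and is the identity at $a=0$, so $\gamma_i(a)\to\gamma_i$ uniformly as $a\to 0$ for each $i$. The five curves $\gamma_0,\dots,\gamma_4$ are pairwise disjoint with the nesting $\gamma_4\subset Int(\gamma_2)\subset Int(\gamma_0)$ and $\gamma_0\subset Int(\gamma_1)\subset Int(\gamma_3)$ of Lemma~\ref{lem:defpreimgamma}; let $2\delta$ be a lower bound for their pairwise distances. Shrinking $\Lambda_4$ to a neighbourhood of $0$ on which each $\gamma_i(a)$ stays within distance $\delta$ of $\gamma_i$ keeps the curves confined to disjoint thin tubes around the $\gamma_i$, so every nesting relation persists. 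The step I expect to be most delicate is exactly this one: ensuring that the separately constructed motions $\gamma_i(a)$ do not collide as $a$ varies. One is tempted to treat $\bigcup_i\gamma_i(a)$ as a single holomorphic motion and invoke injectivity from the $\lambda$-Lemma, but that presupposes the very disjointness one wants; I sidestep the circularity by using joint continuity of each individual motion and shrinking the parameter domain, which is harmless since the theorem only asserts a neighbourhood of $0$.
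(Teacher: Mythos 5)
Your proof is correct and follows essentially the same route as the paper: continue each curve as a holomorphic motion of its $a=0$ counterpart, get the quasicircle property and the $\mathbb{U}$-symmetry from the $\lambda$-Lemma and the equivariance $R_a(\xi z)=\xi R_a(z)$, and transport the degree and the nesting from $a=0$ by continuity. The only divergences are cosmetic: the paper deduces the quasicircle property from the absence of critical values on $\gamma_{i-1}(a)$ rather than from the $\lambda$-Lemma, and where you conservatively shrink the parameter domain to preserve the inclusions, the paper simply asserts their persistence on all of $\Lambda_4$ --- harmless for this lemma, though worth noting that the later sections want $\Lambda$ large enough to contain the interval $(a_q,0)$.
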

\begin{proof}
	Since the sets $\Lambda_{i}$ are chosen so that the curves $\gamma_{i-1}(a)$ do not contain critical values and $\gamma_0(a)$ is a quasicircle (Lemma~\ref{lem:contgamma0}) it follows that all connected components of $R^{-1}_a(\gamma_{i-1}(a))$ are quasicircles. The curves $\gamma_i(a)$ are symmetric with respect to rotation by a third root of the unity since $\gamma_0(a)$ also is and this property is preserved by  \newr{backward iterations}   of $R_a$ (as long as the set surrounds $z=0$).
	
	The fact that the curves  $\gamma_i(a)$, $i=1,2,3,4$ are mapped 2 to 1 onto $\gamma_{i-1}(a)$ follows from the fact that they are holomorphic motions of $\gamma_i(a)$ and the curves $\gamma_i(a)$ satisfy the same property (see Lemma~\ref{lem:defpreimgamma}). The final inclusions also come from the corresponding inclusions of the curves  $\gamma_i(a)$.
\end{proof}

Notice that in the previous lemma we have defined recursively the sets $\Lambda_i$. We would like to point out that it was not strictly necessary to define all those sets due to the inclusions of the curves. Indeed, by Lemma~\ref{lem:iterateanell}, since $\gamma_0(a)\subset Int(\gamma_1(a))$ it follows that we can take $\Lambda_1=\Lambda_2$. Also, since $\gamma_2(a)\subset Int(\gamma_3(a))$ we can take $\Lambda_3=\Lambda_4$. Using the previous lemma we can now fix the set of parameters on which we will perform the surgery construction, which actually corresponds to $\Lambda_4$.

\begin{definition}\label{def:lambda} We define $\Lambda:=\Lambda_4$, i.e.\  $\Lambda\subset \com$ as an open simply connected set of parameters containing $a=0$ such that the holomorphic motion of $\gamma_2(a)$ is well defined and $\gamma_2(a)$ contains no critical value.
\end{definition}

\begin{figure}[hbt!]
	
	\centering
	\setlength{\unitlength}{230pt}%
	\begin{picture}(1,1)%
		\put(0,0){\includegraphics[width=\unitlength,page=1]{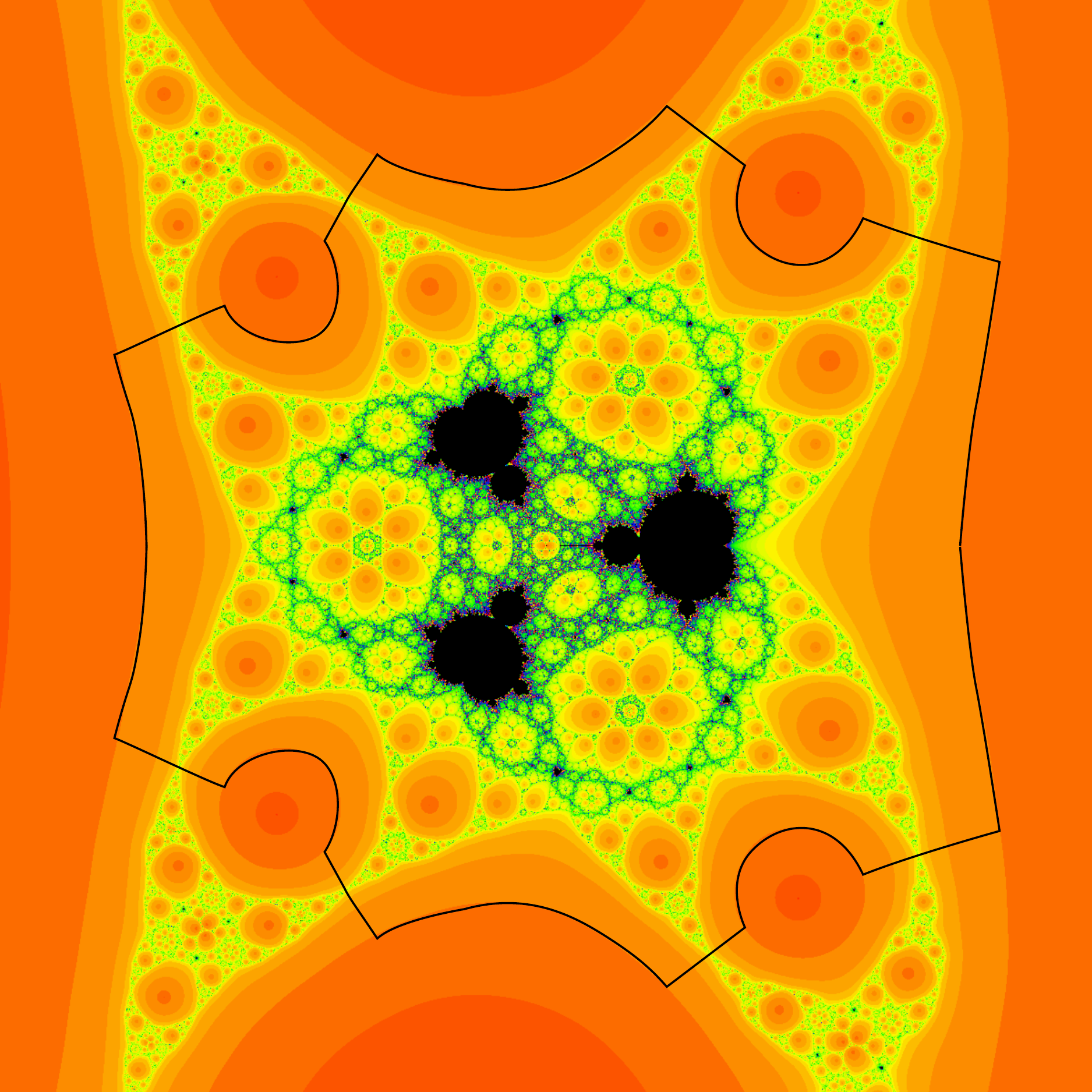}}%
		\put(0.8,0.5){\color[rgb]{0,0,0}\makebox(0,0)[lt]{\smash{\begin{tabular}[t]{l}$\Lambda$\end{tabular}}}}%
		\put(0.2,0.492){\color[rgb]{0,0,0}\makebox(0,0)[lt]{\smash{\begin{tabular}[t]{l}$ \bullet a_q$\end{tabular}}}}
		
	\end{picture}%
	\caption{\small Sketch of the domain $\Lambda$. The parameters $a$ are such that $\re(a)\in(-0.05, 0.05)$ and $\im(a)\in(-0.05, 0.05)$.}
	\label{fig:curveLambda}
\end{figure}

Even though it is not the goal of this paper, it follows from standard results in holomorphic dynamics that $\Lambda$ can be taken as indicated in Figure~\ref{fig:curveLambda}. The set $\partial\Lambda$ is chosen so that the critical values $v_{a,j}$ \eqref{eq:critvalue} lie \newr{on} $\gamma_2(a)$. Notice that the chosen $\Lambda$ is contained in the complement of the disk where the fixed points $x_{a,j}$ \eqref{eq:fixed} are non-repelling (see Remark~\ref{rem:stabilityfixed}).

We have proven that for  $a\in\Lambda$ the curves $\gamma_j(a)$ are mapped 2 to 1 onto $\gamma_{j-1}$ for $j=1,\ldots,4$. In this sense, the dynamics of $R_0$ is  \newr{preserved} after perturbation. The dynamics of $R_a$ restricted to the regions bounded by these curves is also maintained. However, after perturbation, the dynamics of $R_a$ on the unbounded regions delimited by these curves changes. This is described in the next proposition (see Figure~\ref{fig:confpert}). 

\begin{proposition}\label{prop:confpert}
Let $a$ in $ \Lambda \setminus \{0\}$. Then, there exists a preimage $\gamma_2'(a)$ of $\gamma_2(a)$ under $R_a$ which is a simple closed curve that is mapped 1 to 1 onto $\gamma_2(a)$,  is invariant with respect to rotation by a third root of the unity, and satisfies $\gamma_3(a)\subset Int(\gamma_2'(a))$. Moreover, the following holds.
\begin{enumerate}[i)]
\item The map $R_a:Int(\gamma_2(a))\rightarrow Ext(\gamma_1(a))$ is proper of degree 2.
\item The map $R_a:{A(\gamma_1(a),\gamma_3(a))}\rightarrow A(\gamma_2(a),\gamma_0(a))$ is proper of degree 2. 
\item The map $R_a:Ext(\gamma'_2(a))\rightarrow Ext(\gamma_{2}(a))$ is proper of degree 1.
\item The map $R_a:{A(\gamma_2'(a),\gamma_3(a))}\rightarrow Int(\gamma_2(a))$ is proper of degree 3. 
\end{enumerate}
In particular, the annulus $A(\gamma_2'(a),\gamma_3(a))$ contains the 3 critical points  and the 3 zeros that appear near $z=\infty$ after perturbation.
\end{proposition}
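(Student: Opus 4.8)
The plan is to obtain all four mapping degrees, and the curve $\gamma_2'(a)$ itself, from a single global analysis of the preimage $R_a^{-1}(Int(\gamma_2(a)))$ via Riemann--Hurwitz, the $\mathbb U$-symmetry, and the location of the critical points, zeros and poles of $R_a$. First I would mark the relevant points. For $a\neq0$ small, $\infty$ is a repelling fixed point of local degree $1$ (Lemma~\ref{lem:contgamma0}, Remark~\ref{rem:stabilityfixed}) with $R_a(\infty)=\infty\in Ext(\gamma_2(a))$; the point $z=0$ is a critical point with $R_a(0)=\infty$ of local degree $2$; and the remaining three preimages of $\infty$, the finite poles, lie in $Int(\gamma_1(a))$ as for $R_0$ (Remark~\ref{rem:preimgamma}). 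The six zeros of $R_a$ split into three moderate ones near $z^3=-1/5$, lying in $Int(\gamma_0(a))\subset Int(\gamma_3(a))$, and three of modulus $\sim|a|^{-1/3}$ near $z^3\approx-15/(2a)$, lying in $Ext(\gamma_3(a))$; similarly the free critical points $c_{a,j}$ of \eqref{eq:crit} have modulus $\sim|a|^{-1/3}$ and lie in $Ext(\gamma_3(a))$, while by Lemma~\ref{lem:iterateanell} their images $v_{a,j}$ satisfy $|v_{a,j}|<\mathcal C_1|a|^{2/3}$ and so lie in $Int(\gamma_2(a))$ (which stays bounded away from $0$ as $a\to0$). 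Thus the only critical values in $Int(\gamma_2(a))$ are the three simple values $v_{a,j}$, the other critical values ($\infty$ and the three roots of unity) lying in $Ext(\gamma_2(a))$.

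For (i): since $R_a\colon\gamma_2(a)\to\gamma_1(a)$ has degree $2$ (Lemma~\ref{cont:gamma1234}) and $0\mapsto\infty\in Ext(\gamma_1(a))$, the disk $Int(\gamma_2(a))$ maps properly onto $Ext(\gamma_1(a))$, and as the only preimage of $\infty$ inside $\gamma_2(a)$ is $z=0$ (of multiplicity $2$, the poles lying outside), the degree is $2$. For (ii), the boundary maps $\gamma_1(a)\to\gamma_0(a)$ and $\gamma_3(a)\to\gamma_2(a)$ both have degree $2$; the image of $A(\gamma_1(a),\gamma_3(a))$ is an open set bounded by both $\gamma_0(a)$ and $\gamma_2(a)$, hence equals $A(\gamma_2(a),\gamma_0(a))$. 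This target lies in $Int(\gamma_0(a))$ and so omits the fixed roots of unity; as each $\zeta^j\in Ext(\gamma_0(a))$, no $\zeta^j$ can lie in $A(\gamma_1(a),\gamma_3(a))$ either, and neither $z=0$ nor the $c_{a,j}$ do. Hence $R_a$ is unramified on $A(\gamma_1(a),\gamma_3(a))$ and Riemann--Hurwitz gives (ii). For $R_0$ one has $Ext(\gamma_3)\to Int(\gamma_2)$ with $\infty\mapsto0$, so the same fixed-point argument gives $\zeta^j\in Int(\gamma_3)$, and by continuity $\zeta^j\in Int(\gamma_3(a))$ for small $a$.

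The heart of the argument is $R_a^{-1}(Int(\gamma_2(a)))$. By Riemann--Hurwitz its Euler characteristic is $6\cdot1-3=3$, the only ramification over the target coming from the three simple critical values $v_{a,j}$. Since $A(\gamma_1(a),\gamma_3(a))$ maps into $Ext(\gamma_2(a))$ by (ii) and $R_a$ has no critical point on $\gamma_3(a)$ (its image $\gamma_2(a)$ contains no critical value, Definition~\ref{def:lambda}), $R_a$ is a local homeomorphism across $\gamma_3(a)$ and carries the one-sided collar of $\gamma_3(a)$ in $Ext(\gamma_3(a))$ into $Int(\gamma_2(a))$; let $A$ be the component of $R_a^{-1}(Int(\gamma_2(a)))$ containing that collar, so $\gamma_3(a)\subset\partial A$. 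Exactly three preimages of $0$ lie in $Ext(\gamma_3(a))$ (the three large zeros), so the components of $R_a^{-1}(Int(\gamma_2(a)))$ meeting $Ext(\gamma_3(a))$ have total degree $3$. Writing $d_A$ for the degree of $R_a\colon A\to Int(\gamma_2(a))$, its boundary covers $\gamma_2(a)$ with total degree $d_A$, of which $\gamma_3(a)$ contributes $2$, so $2\le d_A\le3$; and $d_A=2$ is impossible, for then $\gamma_3(a)$ would be the only boundary curve and $A=Ext(\gamma_3(a))\ni\infty$, contradicting $R_a(\infty)=\infty\notin Int(\gamma_2(a))$. Therefore $d_A=3$, $A$ is the unique component in $Ext(\gamma_3(a))$, its second boundary curve $\gamma_2'(a)$ maps $1$-to-$1$ onto $\gamma_2(a)$, and $\chi(A)=3-b_A=0$ forces $A$ to be an annulus carrying all three $c_{a,j}$ and the three large zeros. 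This yields the existence and properties of $\gamma_2'(a)$ together with (iv) and the final clause; $\mathbb U$-invariance is inherited from $A$, and $\gamma_3(a)\subset Int(\gamma_2'(a))$ by construction. Finally (iii) follows since $Ext(\gamma_2'(a))$ contains $\infty$ as its only preimage of $\infty$ (the points $0$ and the poles lying in $Int(\gamma_2'(a))$) and no critical point (the $\zeta^j$ lying in $Int(\gamma_3(a))$), so $R_a\colon Ext(\gamma_2'(a))\to Ext(\gamma_2(a))$ is proper of degree $1$.

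I expect the main obstacle to be controlling precisely the objects that degenerate as $a\to0$: the three free critical points and the three extra zeros all escape to $\infty$ at rate $|a|^{-1/3}$, exactly where the perturbation of $R_0$ is most singular, so one cannot argue by naive continuity from the $R_0$-picture (in which $Ext(\gamma_3)$ is a single disk mapping with degree $2$, ramified only at $\infty$). Establishing that these six points all fall in $Ext(\gamma_3(a))$ while their images fall in $Int(\gamma_2(a))$ rests on the explicit asymptotics in \eqref{eq:crit} and on Lemma~\ref{lem:iterateanell}; once that is secured, the degree and Euler-characteristic bookkeeping above rigidly forces the disk around $\infty$ to open into the annulus $A(\gamma_2'(a),\gamma_3(a))$, which is the mechanism behind the whole statement.
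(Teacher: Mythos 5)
Your conclusions all match the proposition, and parts (i) and (ii) are argued essentially as in the paper (pole count inside $\gamma_2(a)$ for (i); properness plus absence of critical values in $A(\gamma_2(a),\gamma_0(a))$ for (ii)). Where you genuinely diverge is the core claim, the existence of $\gamma_2'(a)$: the paper gets it ``for free'' from the degree drop at $a=0$ --- all preimages of $\gamma_2$ under the degree-$5$ map $R_0$ persist as holomorphic motions inside $\overline{Int(\gamma_3(a))}$ for $a\in\Lambda$, and since $R_a$ has degree $6$ the missing unit of degree must appear as a new Jordan curve in $Ext(\gamma_3(a))$ mapped $1$-to-$1$ onto $\gamma_2(a)$; statement (iv) and the final clause then follow from the boundary degrees and Riemann--Hurwitz. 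You instead reconstruct the whole picture from a component/Euler-characteristic analysis of $R_a^{-1}(Int(\gamma_2(a)))$, anchored on the explicit asymptotic location of the three large zeros and the three free critical points, and you \emph{derive} rather than inherit the fact that the exterior disk of $R_0$ opens into an annulus. Your route is more self-contained (it does not lean on the ``maximality'' description of $\Lambda$, which the paper states somewhat informally) and it delivers (iv) and the critical-point count in one stroke; the paper's route is shorter and works uniformly on $\Lambda$ because it only ever invokes the holomorphic motion.

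One point needs attention. Your key numerical input --- exactly three zeros of $R_a$ (and the three $c_{a,j}$, with images in $Int(\gamma_2(a))$) lie in $Ext(\gamma_3(a))$ --- is justified only by asymptotics as $a\to 0$, whereas the proposition is asserted for every $a\in\Lambda\setminus\{0\}$, and $\Lambda$ is a fixed neighbourhood, not an infinitesimal one. This is fixable in one line: no zero of $R_a$ can ever lie on $\gamma_3(a)$ for $a\in\Lambda$ (its image $\gamma_2(a)$ does not pass through $0$), and no critical value can lie on $\gamma_2(a)$ (Definition~\ref{def:lambda}), so both counts are locally constant on the connected set $\Lambda\setminus\{0\}$ and hence equal to their values for $|a|$ small. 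Without that continuation step your argument proves the proposition only on a possibly smaller punctured neighbourhood of $0$. A second, cosmetic remark: in (ii), Riemann--Hurwitz alone does not pin down the degree of an unramified proper map between annuli ($\chi=0=d\cdot 0$ for every $d$); what fixes $d=2$ is that the degree of a proper map between annuli is realized on each boundary end, which is the argument the paper actually uses.
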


\begin{figure}[hbt!]
	\centering
	\setlength{\unitlength}{300pt}%
	\begin{picture}(1,0.78184249)%
		\put(0,0){\includegraphics[width=\unitlength,page=1]{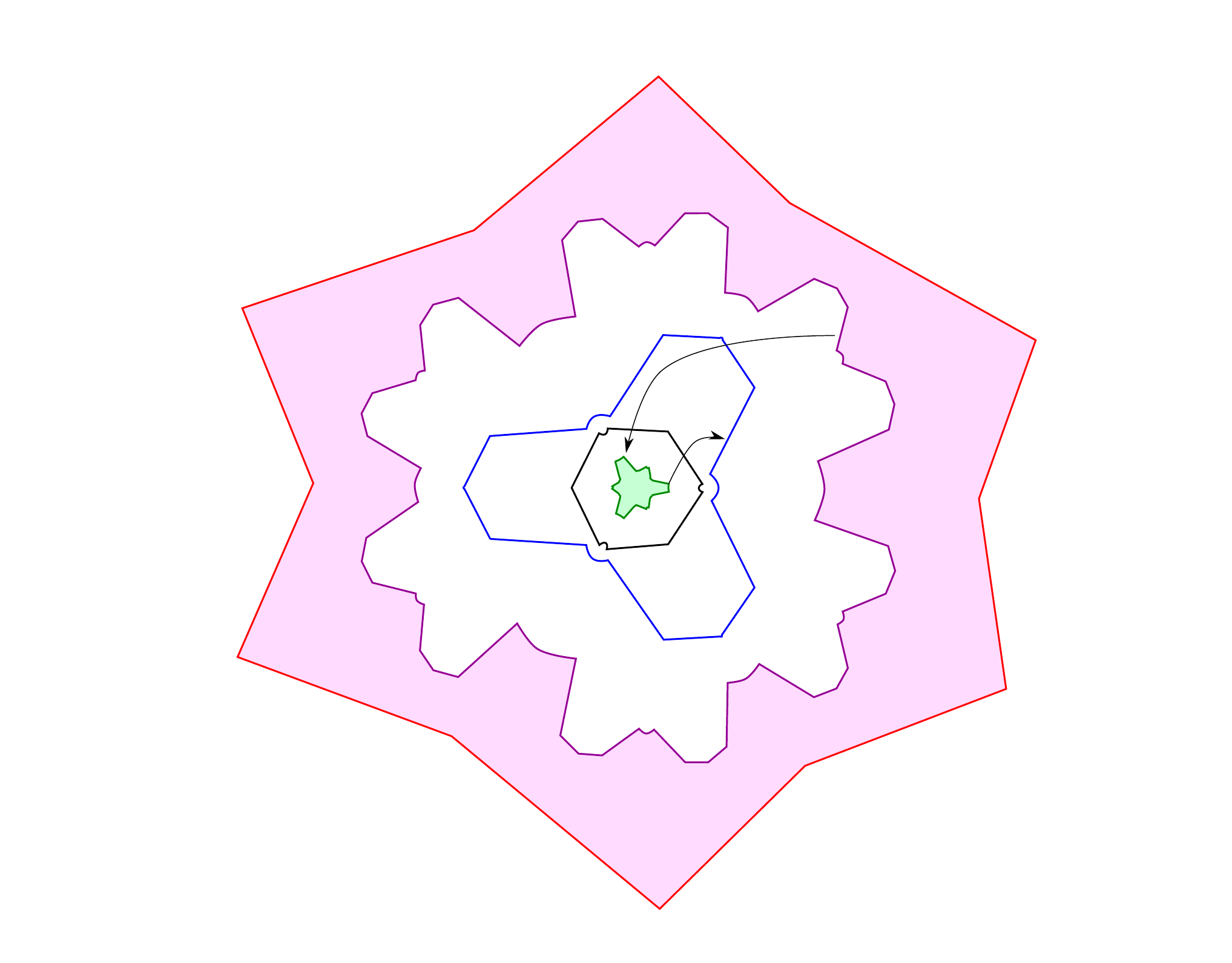}}%
		\put(0.5,0.315){\color[rgb]{0,0,0}\makebox(0,0)[lt]{\smash{\begin{tabular}[t]{l}$\gamma_0$\end{tabular}}}}%
		\put(0.53472778,0.24){\color[rgb]{0,0,1}\makebox(0,0)[lt]{\smash{\begin{tabular}[t]{l}$\gamma_1$\end{tabular}}}}%
		\put(0.495,0.35){\color[rgb]{0,0.54509804,0}\makebox(0,0)[lt]{\smash{\begin{tabular}[t]{l}$\gamma_2$\end{tabular}}}}%
		\put(0.58,0.5567109){\color[rgb]{0.58823529,0,0.58823529}\makebox(0,0)[lt]{\smash{\begin{tabular}[t]{l}$\gamma_3$\end{tabular}}}}%
		\put(0.48,0.66113087){\color[rgb]{1,0,0}\makebox(0,0)[lt]{\smash{\begin{tabular}[t]{l}$\gamma_2'$\end{tabular}}}}%
		\put(0,0){\includegraphics[width=\unitlength,page=2]{confpert.pdf}}%
		\put(0.37524422,0.01224631){\color[rgb]{0,0,0}\makebox(0,0)[lt]{\smash{\begin{tabular}[t]{l}$\gamma_0'$\end{tabular}}}}%
		\put(0,0){\includegraphics[width=\unitlength,page=3]{confpert.pdf}}%
		\put(0.36,0.4653136){\color[rgb]{0,0,0}\makebox(0,0)[lt]{\smash{\begin{tabular}[t]{l}$1:1$\end{tabular}}}}%
		\put(0.59,0.48){\color[rgb]{0,0,0}\makebox(0,0)[lt]{\smash{\begin{tabular}[t]{l}$2:1$\end{tabular}}}}%
		\put(0,0){\includegraphics[width=\unitlength,page=4]{confpert.pdf}}%
		\put(0.32,0.30526827){\color[rgb]{0,0,0}\makebox(0,0)[lt]{\smash{\begin{tabular}[t]{l}$3:1$\end{tabular}}}}%
			\put(0.7,0.37){\color[rgb]{0,0,0}\makebox(0,0)[lt]{\smash{\begin{tabular}[t]{l}$\times$\end{tabular}}}}%
		\put(0.38,0.55){\color[rgb]{0,0,0}\makebox(0,0)[lt]{\smash{\begin{tabular}[t]{l}$\times$\end{tabular}}}}%
		\put(0.38,0.21){\color[rgb]{0,0,0}\makebox(0,0)[lt]{\smash{\begin{tabular}[t]{l}$\times$\end{tabular}}}}%
		\put(0,0){\includegraphics[width=\unitlength,page=5]{confpert.pdf}}%
		\put(0.385,0.07387116){\color[rgb]{0,0,0}\makebox(0,0)[lt]{\smash{\begin{tabular}[t]{l}$1:1$\end{tabular}}}}%
	\end{picture}%
	\caption{\small Sketch of the dynamics of $R_a$ described in Proposition~\ref{prop:confpert}. The pink annulus $A(\gamma_2(a),\gamma_0(a)) $ contains three critical points (marked with crosses) and is mapped with degree 3 onto the green
disc bounded by $\gamma_2(a)$.  In order not to overload the figure, the dependence of the curves on $a$ is omitted. }
	\label{fig:confpert}
\end{figure}

\begin{proof}
Statement $i)$ follows from the fact that $a\in\Lambda$ \newr{since,} therefore, the dynamics in the region  bounded by $\gamma_2(a)$ remain unchanged. In particular,  the only pole of $R_a$ in $Int(\gamma_2(a))$ is $z=0$, which is a pole of order 2. Similarly, it is easy to see that $A(\gamma_1(a),\gamma_3(a))$ is a connected component of $R_a^{-1}(A(\gamma_2(a),\gamma_0(a))$ (notice that, by construction, $A(\gamma_1(a),\gamma_3(a))$ cannot contain neither zeros nor poles). Therefore, $R_a:{A(\gamma_1(a),\gamma_3(a))}\rightarrow A(\gamma_2(a),\gamma_0(a))$  is proper. By definition of $\Lambda$ the annulus $A(\gamma_2(a),\gamma_0(a))$  \newr{cannot contain} critical values. We conclude that the degree of the proper map is achieved on the boundaries of the annulus, and so this degree is 2. This \newr{proves} $ii)$.

For $a=0$, the curve $\gamma_3$ is mapped with degree 2 onto $\gamma_2$. Moreover, all \newr{the} other preimages of $\gamma_2$ lie in the region bounded by $\gamma_3$ (see Remark~\ref{rem:preimgamma}). Recall that $\Lambda$ consists of the open connected set of parameters containing $a=0$ such that no critical values has reached $\gamma_2(a)$. Equivalently, $\Lambda$ consists of the maximum set of parameters for which all preimages of $\gamma_2$ under $R_0$ can be continued as  preimages of $\gamma_2(a)$ under $R_a$. In particular, all preimages of $\gamma_2(a)$ in $\overline{Int(\gamma_3(a))}$ correspond to holomorphic motions of the preimages of $\gamma_2$ under $R_0$. Since $R_0$ has degree 5 and $R_a$ has degree 6, it follows that there is a simple closed curve $\gamma_2'(a)\subset Ext(\gamma_3(a)))$ which is mapped 1 to 1 onto $\gamma_2(a)$ under $R_a$. Moreover, $\gamma_2'(a)$ is invariant under rotation by a third root of the unity since  $\gamma_2(a)$ is also invariant. We obtain that $\gamma_3(a)\subset Int(\gamma_2'(a))$.

It follows from Remark~\ref{rem:preimgamma} that $Int(\gamma_1(a))$ contains all  \newr{the} preimages of $z=\infty$ other than itself. We can conclude that $R_a:Ext(\gamma'_2(a))\rightarrow Ext(\gamma_{2}(a))$ is proper of degree 1. This proves statement $iii)$. 

Since the curves $\gamma_2'(a)$ and $\gamma_3(a)$ are mapped onto $\gamma_2(a)$ with degree 1 and 2, respectively, and the annulus $A(\gamma_2'(a),\gamma_3(a))$ contains no preimage of $z=\infty$, it follows that $R_a:{A(\gamma_2'(a),\gamma_3(a))}\rightarrow Int(\gamma_2(a))$ is proper of degree 3. This proves statement $iv)$. The final claim follows from the Riemann-Hurwitz formula (see for instance \cite{Ste}) since 3 critical points, counting multiplicity, are required to map a doubly connected domain onto a simply connected domain via a proper map of degree 3. 
\end{proof}

\section{Surgery construction from $R_a^2$ to $z^4+\lambda/z^2$} \label{sec:surgery}

In this section we relate the dynamics of $R_a$ with the one of  $z^4+\lambda/z^2$. To do so we will perform a cut and paste surgery (see \cite{BF}). In this sense, the first step is to build a `rational-like map' which can be used to define the cut and paste surgery. This rational-like configuration (see Figure~\ref{fig:confsurgery}) is defined for $R^2_a$ and is based on the dynamics of $R_a$ described in Proposition~\ref{prop:confpert} \newr{(compare with Figure~\ref{fig:confpert} and Figure~\ref{fig:confpertbis})}. However, in order to make the main surgery construction easier to understand, \newr{since we are  looking at $R_a^2$}, we will introduce a new notation for some of the curves. 

\begin{proposition}\label{prop:rationallikeconf}
Let $a$ in $\Lambda \setminus \{0\}$. There exist quasicircles  $\beta_1^{in}(a)$, $\beta_2^{in}(a)$, $\beta_0^{out}(a)$, $\beta_1^{out}(a)$ and $\beta_2^{out}(a)$ which are analytic except on a finite set of points, surround $z=0$,  are invariant with respect to rotation by a third root of the unity, \newr{and} such that the following hold:
\begin{enumerate}[i)]
\item The curves $\beta_1^{out}(a)$ and $\beta_2^{out}(a)$ are mapped  with degree 4, under $R^2_a$, onto $\beta_0^{out}(a)$ and $\beta_1^{out}(a)$, respectively.
\item The curves $\beta_1^{in}(a)$ and $\beta_2^{in}(a)$ are mapped  with degree 2, under $R^2_a$, onto $\beta_0^{out}(a)$ and $\beta_1^{out}(a)$, respectively.
\item We have the inclusions 
\begin{itemize} 
\item	$\beta_1^{in}(a)\subset Int(\beta_2^{in}(a))$;
\item $\beta_2^{in}(a)\subset Int(\beta_2^{out}(a))$;
\item $\beta_2^{out}(a)\subset Int(\beta_1^{out}(a))$;
\item $\beta_1^{out}(a)\subset Int(\beta_0^{out}(a))$.
\end{itemize}
\item The map  $R_a^2$ \newr{satisfies}:
\begin{itemize}
	\item $R_a^2:A(\beta_2^{in}(a),\beta_2^{out}(a))\rightarrow Int( \beta_1^{out}(a))$ is proper of degree 6. 
	\item $R_a^2:A(\beta_1^{in}(a),\beta_1^{out}(a))\rightarrow Int( \beta_0^{out}(a))$ is proper of degree 6. 
\end{itemize} 
\end{enumerate}
\end{proposition}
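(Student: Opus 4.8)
The plan is to prove the proposition by a relabelling of the curves $\gamma_j(a)$ of Lemma~\ref{cont:gamma1234} together with two new ``inner'' curves obtained as preimages inside the trap-door region, and then to read off every degree of $R_a^2$ by composing the two applications of $R_a$ described in Proposition~\ref{prop:confpert}. First I would set $\beta_0^{out}(a)=\gamma_0(a)$, $\beta_1^{out}(a)=\gamma_2(a)$ and $\beta_2^{out}(a)=\gamma_4(a)$. Since $R_a:\gamma_j(a)\to\gamma_{j-1}(a)$ has degree $2$ for $j=1,\dots,4$, composing two steps gives $R_a^2:\gamma_2(a)\to\gamma_0(a)$ and $R_a^2:\gamma_4(a)\to\gamma_2(a)$ of degree $4$, which is exactly statement $i)$, and the inclusions $\beta_2^{out}\subset Int(\beta_1^{out})\subset Int(\beta_0^{out})$ are just $\gamma_4(a)\subset Int(\gamma_2(a))\subset Int(\gamma_0(a))$ from Lemma~\ref{cont:gamma1234}.

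Next I construct the inner curves. By Proposition~\ref{prop:confpert} $iii)$ the map $R_a:Ext(\gamma_2'(a))\to Ext(\gamma_2(a))$ is a homeomorphism, and since $\gamma_0(a)\subset Ext(\gamma_2(a))$ I define $\gamma_0'(a)\subset Ext(\gamma_2'(a))$ as its degree-$1$ preimage, a curve separating $\gamma_2'(a)$ from $\infty$. Restricting the degree-$2$ proper map $R_a:Int(\gamma_2(a))\to Ext(\gamma_1(a))$ of Proposition~\ref{prop:confpert} $i)$ to $Int(\gamma_4(a))$, whose boundary maps onto $\gamma_3(a)$, yields a proper degree-$2$ branched cover $R_a:Int(\gamma_4(a))\to Ext(\gamma_3(a))$ whose unique critical point is $0$ (with $0\mapsto\infty$). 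As $\gamma_2'(a)$ is a Jordan curve in $Ext(\gamma_3(a))$ winding once around the critical value $\infty$, its preimage is a single Jordan curve around $0$, which I take as $\beta_2^{in}(a)$; thus $R_a:\beta_2^{in}(a)\to\gamma_2'(a)$ has degree $2$ and $R_a:Int(\beta_2^{in}(a))\to Ext(\gamma_2'(a))$ has degree $2$ with $0\mapsto\infty$. Applying the same argument with $\gamma_0'(a)\subset Ext(\gamma_2'(a))$ in place of $\gamma_2'(a)$, I let $\beta_1^{in}(a)\subset Int(\beta_2^{in}(a))$ be the single Jordan curve around $0$ with $R_a:\beta_1^{in}(a)\to\gamma_0'(a)$ of degree $2$. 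Composing, $R_a^2:\beta_2^{in}(a)\to\gamma_2'(a)\to\gamma_2(a)$ and $R_a^2:\beta_1^{in}(a)\to\gamma_0'(a)\to\gamma_0(a)$ have degree $2\cdot 1=2$, which is statement $ii)$; the inclusions $\beta_1^{in}\subset Int(\beta_2^{in})\subset Int(\beta_2^{out})$ hold by construction. That all five curves surround $0$, are $\mathbb U$-symmetric and are quasicircles analytic off a finite set follows as in Lemma~\ref{cont:gamma1234}: the target curves are $\mathbb U$-invariant and the component around $0$ is unique, so symmetry is inherited, and the quasicircle property is preserved by taking preimages that avoid the critical values.

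Finally I would obtain the degree-$6$ maps in $iv)$ by composition. On $A(\beta_2^{in}(a),\beta_2^{out}(a))$ the first application of $R_a$ is unbranched, because the only critical point $0$ lies in $Int(\beta_1^{in}(a))$, and it sends $\beta_2^{in}(a)\mapsto\gamma_2'(a)$ and $\gamma_4(a)\mapsto\gamma_3(a)$; hence $R_a$ maps this annulus with degree $2$ onto $A(\gamma_2'(a),\gamma_3(a))$. Since $R_a:A(\gamma_2'(a),\gamma_3(a))\to Int(\gamma_2(a))$ has degree $3$ by Proposition~\ref{prop:confpert} $iv)$, the map $R_a^2:A(\beta_2^{in},\beta_2^{out})\to Int(\beta_1^{out})$ has degree $2\cdot 3=6$. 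Likewise, on $A(\beta_1^{in}(a),\beta_1^{out}(a))$ the first $R_a$ is unbranched and maps it with degree $2$ onto $A(\gamma_1(a),\gamma_0'(a))$; decomposing this annulus along $\gamma_3(a)$ and $\gamma_2'(a)$ and invoking Proposition~\ref{prop:confpert} $ii)$, $iv)$, $iii)$ (degrees $2$, $3$, $1$ on the three sub-annuli, which together cover $Int(\gamma_0(a))$ exactly three times) shows $R_a:A(\gamma_1(a),\gamma_0'(a))\to Int(\gamma_0(a))$ is proper of degree $3$, whence $R_a^2:A(\beta_1^{in},\beta_1^{out})\to Int(\beta_0^{out})$ has degree $6$.

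The step I expect to be the main obstacle is the careful verification that the first application of $R_a$ sends each fundamental annulus $A(\beta_2^{in},\beta_2^{out})$ and $A(\beta_1^{in},\beta_1^{out})$ exactly onto the annuli $A(\gamma_2'(a),\gamma_3(a))$ and $A(\gamma_1(a),\gamma_0'(a))$ on which Proposition~\ref{prop:confpert} supplies the degree-$3$ behaviour, and the attendant point that each $\beta_i^{in}(a)$ is a genuine single quasicircle, which relies on each target curve winding exactly once around the critical value $\infty$ so that its degree-$2$ preimage is connected. A Riemann-Hurwitz count provides a cross-check: each degree-$6$ map of an annulus onto a disc must carry ramification $6$, which is accounted for precisely by the two preimages, under the unbranched first step, of each of the three critical points $c_{a,j}$.
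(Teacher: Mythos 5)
Your proposal is correct and follows essentially the same route as the paper's proof: the same relabelling $\beta_0^{out}=\gamma_0(a)$, $\beta_1^{out}=\gamma_2(a)$, $\beta_2^{out}=\gamma_4(a)$, the same construction of $\gamma_0'(a)$ and of the inner curves as preimages inside $Int(\gamma_2(a))$, and the same composition of the degrees from Proposition~\ref{prop:confpert} to get the degree-$6$ annulus-to-disc maps. If anything, you supply details the paper leaves implicit (connectedness of $\beta_i^{in}(a)$ via the branch point at $0$, and the three-sub-annulus decomposition behind the second degree-$6$ claim), so no gap to report.
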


\proof

We define $\beta_0^{out}(a):=\gamma_0(a)$, $\beta_1^{out}(a):=\gamma_2(a)$, and $\beta_2^{out}(a):=\gamma_4(a)$. Notice \newr{that}, by definition, $\beta_0^{out}(a)$ is a quasicircle which is analytic except  \newr{at} a finite set of points. This property is also satisfied by all its iterated preimages (as long as they do not contain a critical point).
Statement $i)$ follows directly from Lemma~\ref{cont:gamma1234}. 
By \newr{point $iii)$} of Proposition~\ref{prop:confpert} , there exists a simple closed curve $\gamma_0'(a)\subset Ext(\gamma_2'(a))$ that is mapped  with degree 1  onto $\beta_0^{out}(a)=\gamma_0(a)$, separates  $\gamma_2'(a)$ from $z=\infty$ and is symmetric with respect to rotation by a third root of the unity (see Figure~\ref{fig:confpert}).

\begin{figure}[hbt!]
	\centering
	\setlength{\unitlength}{350pt}%
	\begin{picture}(1,0.5)%

		\put(0,0){\includegraphics[width=\unitlength,page=1]{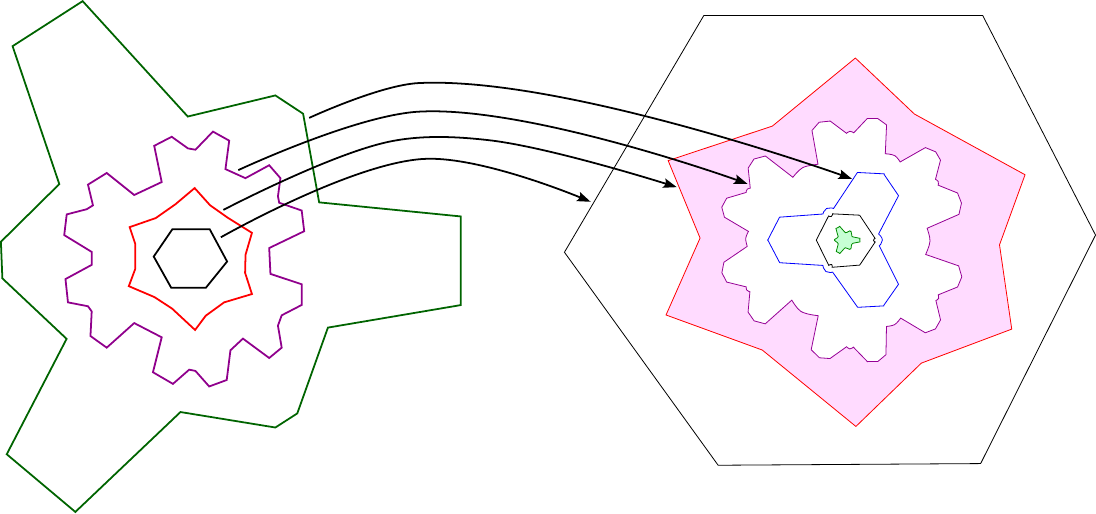}}%
		\put(0.62,0.42){\color[rgb]{0,0,0}\makebox(0,0)[lt]{\smash{\begin{tabular}[t]{l}$\gamma_0'$\end{tabular}}}}%
		\put(0.4,0.4){\color[rgb]{0,0,0}\makebox(0,0)[lt]{\smash{\begin{tabular}[t]{l}$R_a$\end{tabular}}}}%
		\put(0.83,0.37){\color[rgb]{1,0,0}\makebox(0,0)[lt]{\smash{\begin{tabular}[t]{l}$\gamma_2'$\end{tabular}}}}%
		\put(0.03,0.34){\color[rgb]{0.5,0,0.5}\makebox(0,0)[lt]{\smash{\begin{tabular}[t]{l}$\gamma_4=\beta_2^{out}$\end{tabular}}}}%
		\put(0.745,0.36){\color[rgb]{0.5,0,0.5}\makebox(0,0)[lt]{\smash{\begin{tabular}[t]{l}$\gamma_3$\end{tabular}}}}%
		\put(0.135,0.39){\color[rgb]{0,0.54509804,0}\makebox(0,0)[lt]{\smash{\begin{tabular}[t]{l}$\gamma_2=\beta_1^{out}$\end{tabular}}}}%
		\put(0.135,0.23){\color[rgb]{0,0,0}\makebox(0,0)[lt]{\smash{\begin{tabular}[t]{l}$\beta_1^{in}$\end{tabular}}}}%
		\put(0.19,0.17){\color[rgb]{1,0,0}\makebox(0,0)[lt]{\smash{\begin{tabular}[t]{l}$\beta_2^{in}$\end{tabular}}}}%
		\put(0.795,0.24){\color[rgb]{0,0,1}\makebox(0,0)[lt]{\smash{\begin{tabular}[t]{l}$\gamma_1$\end{tabular}}}}%
		
	\end{picture}%
	\caption{\small  \newr{Sketch of how the curves $\beta_1^{in}(a)$, $\beta_2^{in}(a)$, $\beta_2^{out}(a)$ and $\beta_1^{out}(a)$ are defined using the dynamics of $R_a$ described in Proposition~\ref{prop:confpert}. Each of the curves is mapped with degree 2 onto its image. The dependence of the curves on $a$ is omitted.}}
	\label{fig:confpertbis}
\end{figure}

The curves  $\beta_1^{in}(a)$ and  $\beta_2^{in}(a)$ are obtained by $i)$  of Proposition~\ref{prop:confpert}, taking \newr{the respective} preimage of   $\gamma_0'(a)$ and $\gamma_2'(a)$ contained in \newr{$Int(\gamma_2(a))$ (see Figure~\ref{fig:confpertbis})}. Recall here that, by $i)$ of Proposition~\ref{prop:confpert} $R_a:Int(\gamma_2(a))\rightarrow Ext(\gamma_1(a))$ is proper of degree 2, so $\beta_1^{in}(a)$ and $\beta_2^{in}(a)$ are mapped 2 to one onto $\beta_0^{out}(a)$ and $\beta_1^{out}(a)$, respectively, under $R^2_a$.
 Notice also that since $\gamma_3(a)$ separates $\gamma_2'(a)$ from $\gamma_1(a)$ and $\beta_2^{out}(a)=\gamma_4(a)$ is a preimage of $\gamma_3(a)$, we have that $\beta_1^{in}(a)\subset Int(\beta_2^{in}(a))$ and  $\beta_2^{in}(a)\subset Int(\beta_2^{out}(a))$. Together with Lemma~\ref{cont:gamma1234}, this finishes the proof of $i)$, $ii)$ and $iii)$.

Finally, we prove $iv)$. By $i)$ and $iii)$ of Proposition~\ref{prop:confpert}  we \newr{know that the maps} $R_a:Int(\gamma_2(a))\rightarrow Ext(\gamma_1(a))$ and  $R_a:{A(\gamma_2'(a),\gamma_3(a))}\rightarrow Int(\gamma_2(a))$ are proper  of degree 2 and 3, respectively. Recall that the curves $\beta_2^{in}(a)$ and $\beta_2^{out}(a)=\gamma_4(a)$ are the respective preimages of the curves $\gamma_2'(a)$ and $\gamma_3(a)$   in $Int(\gamma_2(a))=Int(\beta_1^{out}(a))$.  
So we can conclude that $R_a^2:A(\beta_2^{in}(a),\beta_2^{out}(a))\rightarrow Int( \beta_1^{out}(a))$ is proper of degree 6. This proper map can be extended to a degree 6 proper map $R_a^2:A(\beta_1^{in}(a),\beta_1^{out}(a))\rightarrow Int( \beta_0^{out}(a))$. This follows directly from  Proposition~\ref{prop:confpert}.
This finishes the proof.
%
%
\endproof

\begin{figure}[hbt!]
	\centering
	\setlength{\unitlength}{350pt}%
	\begin{picture}(1,0.44441143)%
		\setlength\tabcolsep{0pt}%
		\put(0,0){\includegraphics[width=\unitlength,page=1]{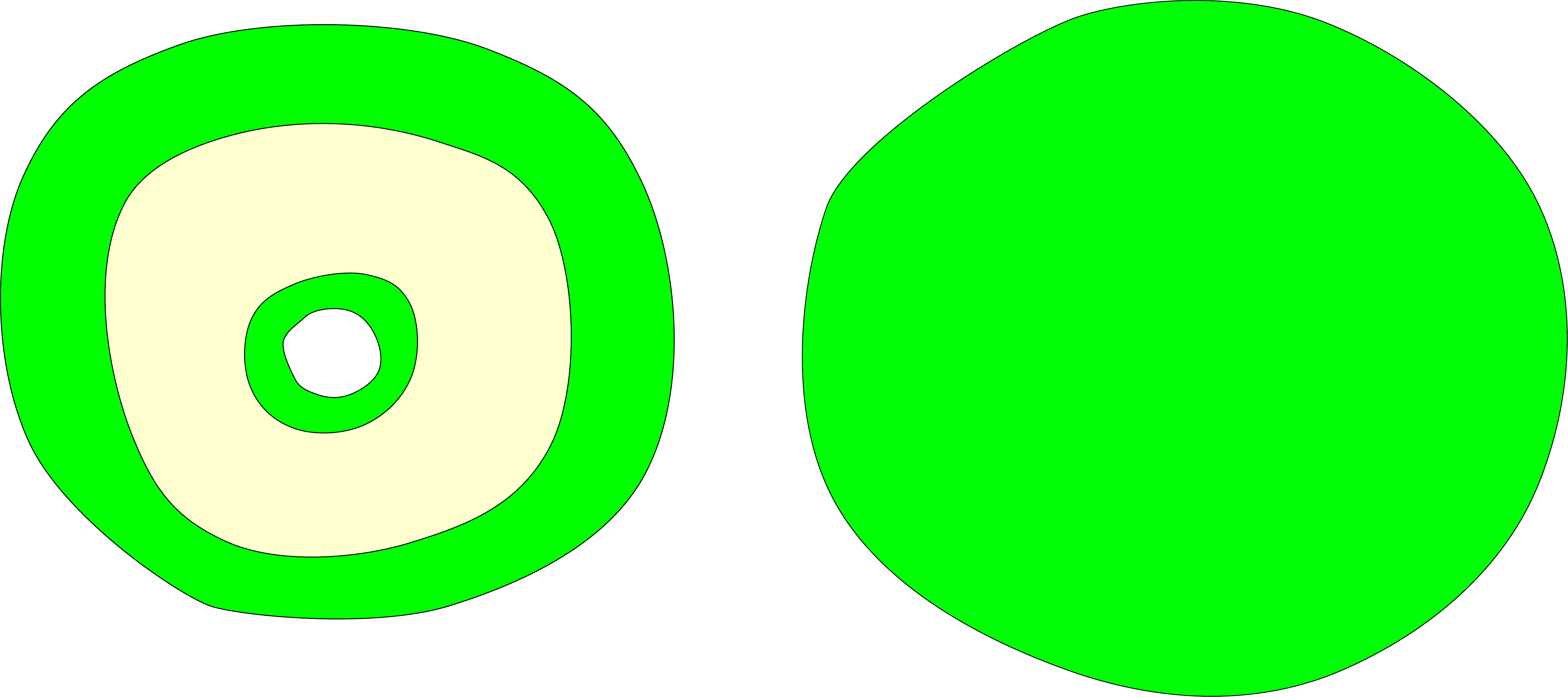}}%
		\put(0.19,0.22){\color[rgb]{0,0,0}\makebox(0,0)[lt]{\smash{\begin{tabular}[t]{l}$\beta_1^{in}$\end{tabular}}}}%
		\put(0.15,0.30){\color[rgb]{0,0,0}\makebox(0,0)[lt]{\smash{\begin{tabular}[t]{l}$\times$\end{tabular}}}}%
		\put(0.15,0.12){\color[rgb]{0,0,0}\makebox(0,0)[lt]{\smash{\begin{tabular}[t]{l}$\times$\end{tabular}}}}%
		\put(0.09,0.22){\color[rgb]{0,0,0}\makebox(0,0)[lt]{\smash{\begin{tabular}[t]{l}$\times$\end{tabular}}}}%
		\put(0.3,0.21){\color[rgb]{0,0,0}\makebox(0,0)[lt]{\smash{\begin{tabular}[t]{l}$\times$\end{tabular}}}}%
		\put(0.25,0.3){\color[rgb]{0,0,0}\makebox(0,0)[lt]{\smash{\begin{tabular}[t]{l}$\times$\end{tabular}}}}%
		\put(0.25,0.12){\color[rgb]{0,0,0}\makebox(0,0)[lt]{\smash{\begin{tabular}[t]{l}$\times$\end{tabular}}}}%
		
		\put(0.254,0.26){\color[rgb]{0,0,0}\makebox(0,0)[lt]{\smash{\begin{tabular}[t]{l}$\beta_2^{in}$\end{tabular}}}}%
		\put(0.28,0.15){\color[rgb]{0,0,0}\makebox(0,0)[lt]{\smash{\begin{tabular}[t]{l}$\beta_2^{out}$\end{tabular}}}}%
		\put(0.16430397,0.0575){\color[rgb]{0,0,0}\makebox(0,0)[lt]{\smash{\begin{tabular}[t]{l}$\beta_1^{out}$\end{tabular}}}}%
		\put(0.70407933,0.01449178){\color[rgb]{0,0,0}\makebox(0,0)[lt]{\smash{\begin{tabular}[t]{l}$\beta_0^{out}$\end{tabular}}}}%
		\put(0,0){\includegraphics[width=\unitlength,page=2]{confsurgery2.pdf}}%
		\put(0.69532559,0.07735862){\color[rgb]{0,0,0}\makebox(0,0)[lt]{\smash{\begin{tabular}[t]{l}$\beta_1^{out}$\end{tabular}}}}%
		\put(0,0){\includegraphics[width=\unitlength,page=3]{confsurgery2.pdf}}%
		\put(0.4165252,0.36){\color[rgb]{0,0,0}\makebox(0,0)[lt]{\smash{\begin{tabular}[t]{l}$R_a^2$\end{tabular}}}}%
		\put(0.43,0.317){\color[rgb]{0,0,0}\makebox(0,0)[lt]{\smash{\begin{tabular}[t]{l}$6:1$\end{tabular}}}}%
		
	\end{picture}%
	\caption{\small Sketch of the dynamics of the curves $\beta_1^{in}(a)$, $\beta_2^{out}(a)$, $\beta_2^{out}(a)$, and $\beta_1^{out}(a)$. The  yellow annulus   $A(\beta_2^{in}(a),\beta_2^{out}(a))$ is mapped under $R_a^2$ with degree 6 into the yellow  disc  $Int( \beta_1^{out}(a))$. We also mark the 6 critical points in the annulus with crosses. The dependence of the curves on $a$ is omitted. }
	\label{fig:confsurgery}
\end{figure}

\begin{remark}\label{rem:critR2}
	It follows from the previous statement that  $A(\beta_1^{in}(a),\beta_1^{out}(a))$ contains exactly 6 critical points. Indeed, by Proposition~\ref{prop:confpert} $i)$,  $A(\beta_2^{in}(a),\beta_2^{out}(a))$ is mapped 2 to 1 onto  $A(\gamma_2'(a),\gamma_3(a))$, which contains the 3 critical points $c_{a,j}$, $j=0,1,2,$ of $R_a$. We can conclude that if $a\in\Lambda$ the maps $R_a^2|_{A(\beta_1^{in}(a),\beta_1^{out}(a))}$ have exactly $6$ different critical points which are mapped under iteration of $R_a^2$ onto exactly 3 critical values. Notice that the 6 critical points cannot be mapped onto exactly one critical value since such critical value would have 12 preimages under $R_a^2|_{A(\beta_1^{in}(a),\beta_1^{out}(a))}$, counting multiplicity. This is impossible since $R_a^1|_{A(\beta_1^{in}(a),\beta_1^{out}(a))}$ is a degree 6 proper map.
\end{remark}

Once we have the rational-like configuration (Proposition~\ref{prop:rationallikeconf}), we can proceed to prove Theorem~A. This is the content of Theorem~\ref{thm:A}. This theorem relates the dynamics of the maps $R_a$ with the one of the McMullen maps $M_{\lambda}(z)=z^4+\lambda/z^2$ within the annulus $A(\beta_2^{in}(a),\beta_2^{out}(a))$.

\begin{theorem}\label{thm:A}
Let $a$ in $\Lambda \setminus \{0\}$. Then, there exists a quasiconformal map $\varphi_a:\wcom\rightarrow\wcom$ such that $\varphi_a\circ R^2_a(z)=M_{\lambda(a)}\circ \varphi_a(z)$ for all $z\in A(\beta_2^{in}(a),\beta_2^{out}(a))$, where $a \mapsto \lambda(a)$ is a map defined on $\Lambda$.  Moreover,  $\varphi_a^{-1}(\mathcal{J}(M_{\lambda(a)}))\subset A(\beta_2^{in}(a),\beta_2^{out}(a))$.
\end{theorem}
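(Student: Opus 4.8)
The plan is to perform a cut-and-paste quasiconformal surgery that replaces the dynamics of $R_a^2$ outside the annulus $A(\beta_2^{in}(a),\beta_2^{out}(a))$ with the escaping dynamics of a McMullen map, using the rational-like configuration from Proposition~\ref{prop:rationallikeconf} as the backbone. First I would use Remark~\ref{rem:critR2}: the restriction $R_a^2|_{A(\beta_1^{in}(a),\beta_1^{out}(a))}$ is a degree $6$ proper map with exactly $6$ simple critical points mapping to $3$ critical values, and by Proposition~\ref{prop:rationallikeconf}\,$iv)$ it maps $A(\beta_2^{in}(a),\beta_2^{out}(a))$ properly with degree $6$ onto $Int(\beta_1^{out}(a))$ (and extends to degree $6$ over $Int(\beta_0^{out}(a))$). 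The goal is to glue onto this ``inner'' rational-like piece a model for the super-attracting behaviour at $\infty$ so that the glued map becomes globally a genuine degree $6$ rational map satisfying the hypotheses a)--d) of Proposition~\ref{prop:rigidity}, whence it is linearly conjugate to some $M_{\lambda(a)}$.

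The key steps, in order, are as follows. \emph{Step 1 (model map).} I would fix a model for the escaping dynamics: on $Ext(\beta_2^{out}(a))$ I want $z=\infty$ to become a super-attracting fixed point of local degree $4$ and $z=0$ (or rather the bounded region $Int(\beta_2^{in}(a))$, collapsed appropriately) to become a double preimage of $\infty$. Concretely I would define an auxiliary map $F_a$ that agrees with $R_a^2$ on $A(\beta_2^{in}(a),\beta_2^{out}(a))$ and is replaced by a $z\mapsto z^4$-type model near $\infty$ and a $z\mapsto 1/z^2$-type (degree $2$) model on $Int(\beta_2^{in}(a))$, interpolating quasiconformally in the two collar annuli $A(\beta_2^{out}(a),\beta_1^{out}(a))$ and inside $\beta_2^{in}(a)$ where the maps need to be matched. \emph{Step 2 (invariant Beltrami coefficient).} I would then spread the standard complex structure by the model dynamics: set $\mu=0$ on the target disk of the super-attracting model, pull it back by the iterates of $F_a$, and set $\mu=0$ on the remaining invariant set (the part where $F_a=R_a^2$ is holomorphic). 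Since each point passes at most once through the quasiconformal interpolating collar before landing in the holomorphic part, the dilatation of $\mu$ is uniformly bounded by the dilatation of the single interpolation, so $\|\mu\|_\infty<1$. \emph{Step 3 (integration and straightening).} By the Measurable Riemann Mapping Theorem there is a quasiconformal $\varphi_a:\wcom\to\wcom$ integrating $\mu$; then $\varphi_a\circ F_a\circ\varphi_a^{-1}$ is holomorphic, hence a rational map $Q_a$ of degree $6$. \emph{Step 4 (apply rigidity).} I would verify that $Q_a$ satisfies properties a)--d) of Proposition~\ref{prop:rigidity}: the local degree $4$ at $\infty$ and the double preimage come from the chosen model (Step 1), the $\mathbb{U}$-symmetry is inherited because every curve and every interpolation can be chosen invariant under rotation by a third root of unity (so $\mu$ is symmetric and $\varphi_a$ can be taken to conjugate the rotation to itself), and the $6$ simple critical points mapping to $3$ critical values come directly from Remark~\ref{rem:critR2}, which survives conjugacy. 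Proposition~\ref{prop:rigidity} then gives $Q_a=M_{\lambda(a)}$ up to linear conjugacy, which I absorb into $\varphi_a$. Since $F_a=R_a^2$ on $A(\beta_2^{in}(a),\beta_2^{out}(a))$, the conjugacy $\varphi_a\circ R_a^2=M_{\lambda(a)}\circ\varphi_a$ holds there, and the Julia set of $M_{\lambda(a)}$, lying in the non-escaping locus, pulls back into this annulus, giving $\varphi_a^{-1}(\mathcal{J}(M_{\lambda(a)}))\subset A(\beta_2^{in}(a),\beta_2^{out}(a))$.

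The main obstacle I expect is Step 1 together with the symmetry bookkeeping in Step 4: one must construct the interpolating quasiconformal model in the collars so that (i) it matches $R_a^2$ smoothly enough on $\beta_2^{out}(a)$ and $\beta_2^{in}(a)$ (the curves are only quasicircles, analytic off finitely many points, so the gluing is across non-smooth boundaries and requires care), (ii) the resulting glued map has the prescribed local degrees $4$ at $\infty$ and $2$ at the inner collapsed point so that hypotheses a) and b) hold exactly, and (iii) every piece is genuinely invariant under multiplication by $\zeta$ so that the $\mathbb{U}$-symmetry of $Q_a$ in hypothesis c) is clean. Verifying that $\|\mu\|_\infty<1$ is then essentially automatic because orbits cross the interpolation region only once, but setting up the model with the correct degrees and equivariance is the delicate part. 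A secondary point is checking that $\lambda(a)$ depends on $a$ in a controlled (e.g.\ holomorphic) way, which follows from the holomorphic dependence of the whole construction on the parameter, but this is subordinate to getting the surgery right.
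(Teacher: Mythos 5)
Your proposal follows essentially the same route as the paper's proof: a cut-and-paste surgery that keeps $R_a^2$ on the annulus $A(\beta_2^{in}(a),\beta_2^{out}(a))$, glues a $z^4$ model at infinity and a $1/z^2$ model in the inner disk via quasiconformal interpolation on the collar annuli, spreads an invariant Beltrami coefficient (bounded dilatation because orbits cross the non-holomorphic collars at most once), integrates with the Measurable Riemann Mapping Theorem, and invokes Proposition~\ref{prop:rigidity} together with Remark~\ref{rem:critR2} and the $\mathbb{U}$-equivariance of every ingredient to identify the straightened map as $M_{\lambda(a)}$. The delicate points you flag (quasisymmetric boundary extension on the quasicircles, degree bookkeeping at $0$ and $\infty$, symmetry of the interpolations and of the integrating map) are exactly the ones the paper addresses, so the plan is sound and matches the published argument.
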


\proof
The idea of the proof is to perform a cut and paste surgery (see \cite{BF}). More specifically, we will build a model map that coincides with $R_a^2$ over $A(\beta_2^{in}(a),\beta_2^{out}(a))$, has the dynamics of $z^4$ and $1/z^2$ in $Ext(\beta_1^{out})$ and $Int(\beta_1^{in})$, respectively, and is globally quasisymmetric. Finally, we will use the Measurable Riemann Mapping Theorem (\cite{BF}) and Proposition~\ref{prop:rigidity} to conclude \newr{that} the model map is quasiconformally \newr{conjugated} to a map of the family $M_{\lambda}$.

We first explain how to glue the dynamics of $z^4$ in $Ext(\beta_1^{out})$ with the one of $R_a^2$ in $A(\beta_2^{in}(a),\beta_2^{out}(a))$. Pick $\rho >1$. Let

 $$\Phi_{out}:Ext(\beta_1^{out}(a))\rightarrow \hat{\com}\setminus \overline{\dis}_{\rho^4}$$ 
 
 \noindent be the Riemann map that has positive real derivative at $z=\infty$  and fixes it.  Since the curve $\beta_1^{out}(a)$ is invariant under rotation by a third root of the unity, it follows that $\Phi_{out}(\xi\cdot z)=\xi \cdot\Phi_{out}(z)$ for any third root of the unity $\xi$ and $z\in Ext(\beta_1^{out}(a))$.  Indeed, since the Riemann map fixing $z=\infty$ is unique up to rotation and $\xi^{-1}\Phi_{out}(\xi\cdot z)$ also has positive real derivative at $z=\infty$, it follows that $\xi^{-1}\Phi_{out}(\xi\cdot z)=\Phi_{out}(z).$
This property is also satisfied by the power map $\Phi_{out}^4(z)$.

Since $\beta_1^{out}(a)$ is a quasicircle, the map  $\Phi_{out}$ extends to the boundary as a quasisymmetric map (see \cite[Thm.\ 2.9]{BF}). Moreover, since  $\beta_1^{out}(a)$ is a finite union of analytic curves, this quasisymmetric maps is analytic except at a finite set of points (see \cite[Remark 2.12]{BF}). 
Let $\psi_{1,out}:\beta_1^{out}(a)\rightarrow\cercle_{\rho^4}$ be the extension map.
Since $\psi_{1,out}\circ R_a^2|_{\beta_2^{out}}(a):\beta_2^{out}(a)\rightarrow \cercle_{\rho^4}$ has degree $4$, we can choose a quasisymmetric lift $\psi_{2,out}:\beta_2^{out}(a)\rightarrow \cercle_{\rho}$ which is analytic except in a finite set of points so that $\psi_{1,out}(R_a^2(z))=\left(\psi_{2,out}(z)\right)^4$.
This lift $\psi_{2,out}$ can be chosen so that $\psi_{2,out}(\xi\cdot z)=\xi \cdot \psi_{2,out}(z)$ for any third root of the unity $\xi$.  By \cite[Proposition 2.30]{BF} there exists a quasiconformal map 

$$\psi_{out}:\overline{A}(\beta_2^{out}(a), \beta_1^{out}(a))\rightarrow \overline{\mathbb{A}}(\rho, \rho^4)$$  

\noindent  such that $\psi_{out}|_{\beta_2^{out}(a)}=\psi_{2 ,out}$ and that $\psi_{out}|_{\beta_1^{out}(a)}=\psi_{1 ,out}$. 
Moreover $\psi_{out}$ can be chosen so that  $\psi_{out}(\xi\cdot z)=\xi \cdot \psi_{out}(z)$ for any third root of the unity $\xi$. Indeed, \cite[Proposition 2.30]{BF} is based on \cite[Proposition 2.28]{BF}, which extends quasisymmetric boundary maps on a straight annulus to a quasiconformal map on the annulus, together with a uniformization map. It is not difficult to see that the quasiconformal map built in \cite[Proposition 2.28]{BF} is symmetry with respect to rotation by a third root of the unity if the boundary maps are also symmetric. As is the case with the Riemann map, the uniformization map sending a non straight annulus to a straight annulus can also be chosen to be symmetric.  

We can now define a quasiregular map in $Ext(\beta_2^{in}(a))$ as:

$$F_a(z)=\left\{\begin{array}{lcl}
	\Phi_{out}^{-1}\left(\Phi^4_{out}(z)\right)&  \mbox{ for } & z\in Ext(\beta_1^{out}(a))\\
	\Phi_{out}^{-1}\left(\psi^4_{out}(z)\right)&  \mbox{ for } & z\in \overline{A}(\beta_2^{out}(a), \beta_1^{out}(a))\\
	R_a^2(z) &  \mbox{ for } & z\in A(\beta_2^{in}(a), \beta_2^{out}(a)).
\end{array}\right.$$

To complete the model we have to glue the dynamics of $1/z^2$ in $Int(\beta_2^{in}(a))$. The construction is completely analogous to the previous case, so we skip some details.  
Let  
$$\Phi_{in}:Int(\beta_1^{in}(a))\rightarrow \dis_{1/\rho^4}$$ 
\noindent be the Riemann map that has positive real derivative at $z=0$  and fixes it. The map $\Phi_{in}$ is symmetric with respect to rotation by a third root of the unity.  Let $\psi_{1,in}:\beta_1^{in}(a)\rightarrow \cercle_{1/\rho^4}$ be the quasisymmetric extension of $\Phi_{in}$.
Since $\psi_{1,out}\circ R_a^2|_{\beta_2^{in}(a)}:\beta_2^{in}(a)\rightarrow \cercle_{\rho^4}$ has degree $2$, there exists a quasisymmetric lift $\psi_{2,in}:\beta_2^{in}(a)\rightarrow \cercle_{1/\rho^2}$ such that $\psi_{1,out}(R_a^2(z))=1/(\psi_{2,in}(z))^2$. 
By \cite[Proposition 2.30]{BF} there exists a quasiconformal map 
$$\psi_{in}:\overline{A}(\beta_1^{in}(a), \beta_2^{in}(a))\rightarrow \overline{\mathbb{A}}(1/\rho^4, 1/\rho^2)$$ 

 \noindent  such that $\psi_{in}|_{\beta_1^{in}(a)}=\psi_{1 ,in}$ and that $\psi_{in}|_{\beta_2^{in}(a)}=\psi_{2 ,in}$. 
As before, $\psi_{in}$ can be taken to be symmetric with respect to rotation by a third root of the unity. Finally, we can define our model map in the whole Riemann Sphere as:

$$F_a(z)=\left\{\begin{array}{lcl}
\Phi_{out}^{-1}\left(\Phi^4_{out}(z)\right)&  \mbox{ for } & z\in Ext(\beta_1^{out}(a))\\
\Phi_{out}^{-1}\left(\psi^4_{out}(z)\right)&  \mbox{ for } & z\in \overline{A}(\beta_2^{out}(a), \beta_1^{out}(a))\\
R_a^2(z) &  \mbox{ for } & z\in A(\beta_2^{in}(a), \beta_2^{out}(a))\\
\Phi_{out}^{-1}\left(\frac{1}{\psi^2_{in}(z)}\right)&  \mbox{ for } & z\in \overline{A}(\beta_1^{in}(a), \beta_2^{in}(a))\\
\Phi_{out}^{-1}\left(\frac{1}{\Phi^2_{in}(z)}\right)&  \mbox{ for } & z\in Int(\beta_1^{in}(a)),
\end{array}\right. $$

The map $F_a(z)$ is  quasiregular, is symmetric with respect to rotation by a third root of the unity, and has topological degree $6$ by construction. Moreover, $F_a$ is holomorphic in $\wcom\setminus \{A(\beta_1^{in}(a),\beta_2^{in}(a))\cup A(\beta_2^{out}(a), \beta_1^{out}(a))\}$. We continue by defining an $F_a$-invariant complex structure $\sigma$. Notice that the orbit of a point $z$ can go at most once through $A(\beta_1^{in}(a),\beta_2^{in}(a))\cup A(\beta_2^{out}(a), \beta_1^{out}(a))$. Denote $A_n=\{z\;|\; F_a^n(z)\in A(\beta_1^{in}(a),\beta_2^{in}(a))\cup A(\beta_2^{out}(a), \beta_1^{out}(a))\}$. Thus, it is enough to define

$$\sigma_a=\left\{\begin{array}{lcl}
\psi_{\infty}^{*}\sigma_0 &  \mbox{ for } & z\in  A(\beta_2^{out}(a), \beta_1^{out}(a))\\
\psi_{0}^{*}\sigma_0 &  \mbox{ for } & z\in  A(\beta_1^{in}(a),\beta_2^{in}(a))\\
(F_a^n)^{\circledast}\sigma_a &  \mbox{ for } & z\in A_n\\
\sigma_0&  & elsewhere,
\end{array}\right. $$

\noindent where $\sigma_0$ denotes the standard complex structure and $^*$ the pull-back operation.
By construction, $F_a^{*}\sigma_a=\sigma_a$. Since $F_a$ is  holomorphic outside $A(\beta_1^{in}(a),\beta_2^{in}(a))\cup A(\beta_2^{out}(a), \beta_1^{out}(a))$, $\sigma$ has bounded dilatation. Let $\xi$ denote any third root of the unity and let $O_{\xi}(z)=\xi\cdot z$. Since $F_a$ satisfies $O_{\xi}\circ F_a=F_a\circ O_{\xi}$, and so do $\psi_{out}$ and $\psi_{in}$, we have that $O_{\xi}^*\sigma_a=\sigma_a$. Let $\phi_a$ be the integrating map given by the Measurable Riemann Mapping Theorem (see  \cite[p.~57]{Ah} or \cite[Theorem 1.28]{BF}) which fixes $z=0$ and $z=\infty$ and is tangent to the  identity at $z=\infty$ (notice that $\phi_a$ is holomorphic in a neighbourhood of $z=\infty$). Then, $\phi_a^*\sigma_0=\sigma_a$. It follows from the unicity of the integrating map modulus post-composition with conformal automorphisms of $\wcom$ that $\phi_a=O_{\xi}^{-1}\circ\phi_a\circ O_{\xi}$ since $O_{\xi}^{-1}\circ\phi_a\circ O_{\xi}$ would satisfy the same normalizations and $(O_{\xi}^{-1}\circ\phi_a\circ O_{\xi})^*\sigma_0=O_{\xi}^*\sigma_a=\sigma_a$.

Finally, define $G_a=\phi_a \circ F_a\circ \phi^{-1}_a$. By construction, $G_a$ is a rational map of degree $6$. Given any third root of the unity $\xi$, the map $G_a$ satisfies $\xi\cdot G_a(z)=G_a(\xi\cdot z)$  since both $F_a$ and $\phi_a$ satisfy the same condition. By construction, $G_a$ maps $z=0$ to $z=\infty $ with local degree  2, the point $z=\infty$ is super-attracting of local degree $4$ and $G_a$ has 6 critical points which are mapped onto exactly 3 critical values (compare Remark~\ref{rem:critR2}). By Proposition~\ref{prop:rigidity} we conclude that $G_a$ is  conjugated to the map 
 $$M_{\lambda(a)}(z)=z^4+\frac{\lambda(a)}{z^2},$$
 
 \noindent under a linear map $L_a$. To finish the proof it is enough to take $\varphi_a=L_a\circ \phi_a$.  Notice that, by construction, $Ext(\beta_2^{out})$ and $Int(\beta_2^{int})$ belong to the basin of attraction of $z=\infty$ under $F_a$. Therefore, $\varphi_a^{-1}(\mathcal{J}(M_{\lambda(a)}))\subset A(\beta_2^{in}(a),\beta_2^{out}(a))$ 
\endproof

\begin{remark}
	The parameter $\lambda(a)$ depends in $a$ as well as in the level $l_0$ of the equipotentials chosen to define $\gamma_0$ (Definition~\ref{def:gamma0}). 
\end{remark}

\section{Further results}\label{sec:further_results}

In Theorem A we relate the dynamics of the map $R_a^2$ with the ones of a map $M_{\lambda(a)}$. 
The Escape Trichotomy Theorem (see section \ref{sec:MCM}) states that if all critical orbits of $M_{\lambda}$ escape to $\infty$, then $\mathcal{J}(M_{\lambda})$ is either a Cantor set, a Cantor set of circles or a Sierpinski carpet. In this section we study the Julia set of the maps $R_a$, justify that these three cases are achieved by the map  $M_{\lambda(a)}$ for different values of $a$ (compare Figure~\ref{fig:dyn_plane}). The next two results help us to understand how   the Julia set moves for $|a|$ small.

Let $\delta_0(0):=\partial A^*_0(0)$ and $\delta_\infty(0):=\partial A^*_0(\infty)$ be the boundaries of the immediate basin of attraction of 0 and $\infty$ under $R_0^2$. By Lemma~\ref{lem:boundary0}, $\delta_0(0)$ and  $\delta_\infty(0)$ are quasicircles. The next lemma states that there is a holomorphic motion of these curves in a small 
neighbourhood of $a=0$.

\begin{lemma}\label{lemma:motionboundaries}
There is a holomorphic motion $H(a, \cdot)$ of $\delta_0(0)\cup\delta_\infty(0)$ parameterized by a simply connected domain $\tilde{\Lambda}\subset \Lambda$ that is a neighbourhood of 0. In particular, for all $a\in\tilde\Lambda$ the curves $\delta_0(a)=H(a, \delta_0(0))$ and $\delta_\infty(a)=H(a, \delta_\infty(0))$ are quasicircles.
\end{lemma}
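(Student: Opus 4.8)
The plan is to construct the holomorphic motion $H(a,\cdot)$ on $\delta_0(0)\cup\delta_\infty(0)$ by exploiting the polynomial-like structures already identified in Lemma~\ref{lem:boundary0}, and then to invoke the $\lambda$-Lemma to conclude that the moved curves remain quasicircles. The key point is that $\delta_0(0)=\partial A_0^*(0)$ is the Julia set of the degree-4 polynomial-like map $(R_0^2; Int(\gamma_2), Int(\gamma_0))$, while $\delta_\infty(0)=\partial A_0^*(\infty)$ is the Julia set of the analogous polynomial-like restriction of $R_0^2$ around $z=\infty$. Since the curves $\gamma_i(a)$ move holomorphically (Lemma~\ref{cont:gamma1234}) and the degrees of these restrictions are preserved for $a\in\Lambda$ (Proposition~\ref{prop:confpert} and Proposition~\ref{prop:rationallikeconf}), the polynomial-like maps themselves persist under perturbation as holomorphically-varying families.

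First I would fix the two polynomial-like families. For the cycle point $z=0$: by Proposition~\ref{prop:rationallikeconf} the triple $(R_a^2; A(\beta_2^{in}(a),\beta_2^{out}(a))\cap Int(\cdot), \ldots)$ provides a degree-preserving analytic family of polynomial-like maps with the superattracting fixed point at $z=0$; more directly, one checks that $(R_a^2; Int(\gamma_2(a)), Int(\gamma_0(a)))$ is a polynomial-like map of degree 4 for all $a\in\tilde\Lambda$, using that $\gamma_2(a)\subset Int(\gamma_0(a))$ and that $R_a^2$ maps the former properly onto the latter with the right degree. The same construction around $z=\infty$ gives a second analytic family of polynomial-like maps. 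I would then apply the structural stability theorem for analytic families of polynomial-like maps (see \cite{BF}): since the critical orbit structure is constant on a neighbourhood of $a=0$ (the map stays hybrid-equivalent to $z^4$, as the relevant critical points stay captured in the immediate basins), the straightenings vary holomorphically, and this yields a holomorphic motion of the Julia sets $\delta_0(a)$ and $\delta_\infty(a)$.

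The cleanest route, however, is to build the motion directly by conjugating the dynamics via Böttcher-type coordinates. On $A_0^*(0)$ the superattracting fixed point of $R_0^2$ has a Böttcher coordinate $\mathcal{B}_0$ conjugating $R_0^2$ to $z^4$ near $0$; since $0$ remains superattracting of local degree 4 for $R_a^2$ and no free critical point enters $A_a^*(0)$ for $a\in\tilde\Lambda$, the Böttcher coordinate extends over the whole immediate basin and depends holomorphically on $a$. Pulling back the boundary circle through these coordinates realizes $\delta_0(a)$ as the image of $\delta_0(0)$ under a map that is injective in $z$ and holomorphic in $a$, with $H(0,\cdot)=\mathrm{id}$ — precisely Definition~\ref{def:holomotion}. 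The same applies verbatim to $\delta_\infty(a)$, taking care that the two motions agree where needed and together define $H$ on the disjoint union $\delta_0(0)\cup\delta_\infty(0)$.

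The main obstacle is ensuring that no free critical point collides with either basin boundary as $a$ varies, which is exactly what forces the restriction to $\tilde\Lambda\subset\Lambda$ rather than all of $\Lambda$; this is the hypothesis that keeps both polynomial-like maps of constant degree and prevents the Böttcher coordinate from failing to extend. Once the motion is established, the conclusion that $\delta_0(a)$ and $\delta_\infty(a)$ are quasicircles follows immediately from the $\lambda$-Lemma (see \cite{MSS}): a holomorphic motion extends to the closure and respects quasiconformality, so since $\delta_0(0)$ and $\delta_\infty(0)$ are quasicircles by Lemma~\ref{lem:boundary0}, their images under $H(a,\cdot)$ are quasicircles as well. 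I would present the Böttcher-coordinate construction as the primary argument and mention the polynomial-like family viewpoint as the conceptual reason the motion exists.
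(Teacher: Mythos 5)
Both routes you propose rest on a premise that fails for exactly the reason this family is interesting: for $a\in\tilde\Lambda\setminus\{0\}$ the super-attracting $2$-cycle $\{0,\infty\}$ of $R_0$ is destroyed. As recalled in \S\ref{sec:CH}, for $a\neq 0$ the point $\infty$ is a fixed point of multiplier $3(1+a)/2a$ (hence repelling for $|a|$ small) and $0$ is merely a preimage of it, so $R_a^2$ has \emph{no} super-attracting fixed point at $0$ or $\infty$, no immediate basins $A_a^*(0)$ or $A_a^*(\infty)$, and no B\"ottcher coordinates there. Your ``cleanest route'' therefore has nothing to extend: the assertion that ``$0$ remains superattracting of local degree $4$ for $R_a^2$'' is false for every $a\neq 0$. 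The polynomial-like route collapses for the same reason: $(R_a^2;\,Int(\gamma_2(a)),\,Int(\gamma_0(a)))$ is a degree-$4$ polynomial-like map only at $a=0$. Indeed, by Proposition~\ref{prop:confpert}, $R_a$ sends $Int(\gamma_2(a))$ properly with degree $2$ onto $Ext(\gamma_1(a))$, but for $a\neq 0$ the second application of $R_a$ on that exterior is no longer a proper degree-$2$ map onto $Int(\gamma_0(a))$: the annulus $A(\gamma_2'(a),\gamma_3(a))$, carrying three new critical points and three new zeros, appears near $\infty$, and part of $Ext(\gamma_1(a))$ is sent to $Ext(\gamma_2(a))$. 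This non-persistence is precisely the singular perturbation that makes $R_a^2$ McMullen-like rather than hybrid equivalent to $z^4$. (Even setting this aside, defining the motion of the boundary by ``pulling back the boundary circle through B\"ottcher coordinates'' would be circular: it presupposes that the B\"ottcher map extends homeomorphically to $\partial A_a^*(0)$ for all $a$, which is essentially the conclusion.)

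The mechanism the paper actually uses is different and is worth internalizing: for $a\neq 0$ the curves $\delta_0(a)$ and $\delta_\infty(a)$ are \emph{not} boundaries of basins; they are defined only as images under the motion, surviving as invariant quasicircles inside $\mathcal{J}(R_a^2)$ after the basins they bounded have disappeared. The proof exploits that repelling periodic points are dense in $\delta_0(0)\cup\delta_\infty(0)$ (they correspond to rational angles in the B\"ottcher parametrization \emph{at} $a=0$, where Lemma~\ref{lem:boundary0} applies). One then shows these periodic points stay repelling for $|a|$ small by a cut-and-paste surgery that erases the free critical points while preserving the dynamics of $R_a^2$ on an annulus (bounded by $\beta_1^{out}(a)$ and a continuation $\varsigma_a^{-1}$ of a geodesic of $A_0^*(0)$) containing all these orbits; the resulting map is quasiconformally conjugate to $z^4$, whose periodic points on the circle are repelling. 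The implicit function theorem then gives a holomorphic motion of the dense set of periodic points, and the $\lambda$-Lemma extends it to the closure and yields the quasicircle property. Only your final step --- invoking the $\lambda$-Lemma once a motion of a dense subset is in hand --- coincides with the paper's argument.
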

\proof The sets  $\delta_0(0)=\partial A^*_0(0)$ and $\delta_\infty(0)=\partial A^*_0(\infty)$  are Jordan curves. Periodic points are dense in those curves because they correspond to the rational angles in the Böttcher parametrization.  For $a=0$ there is no parabolic points so that all the aforementioned periodic points are repelling.  
We will prove that they stay  repelling  for  $|a|$   small enough.   Assuming  this property, we get a common neighbourhood of $a=0$  on which we can follow each repelling periodic point 
(by implicit function theorem). This defines  a holomorphic motion of the set of periodic point in the given curves.  Note that the neighbourhood can be chosen simply connected.
So,  
it then follows from the  $\lambda$-Lemma (see \cite{MSS}) that $\delta^0(0)$ and  $\delta^\infty(0)$  admit a holomorphic motion on this neighbourhood so that they are quasicircles through the motion.

We now prove the claim that there exists a neighbourhood of $a=0$ on which the periodic points of  $\partial A^*_0(0)$ stay repelling for  $|a|$   small enough (the proof is analogous for  $\partial A^*_0(\infty)$).
The idea is to perform a surgery which will eliminate all free critical points and keep the dynamics   of all periodic points coming from  $\partial A^*_0(0)$. Since the surgery construction is analogous to the classical one proposed by Douady and Hubbard \cite{DH1} for polynomial-like mappings, we only explain on which curves the cut and paste is done (see also \cite[Theorem 7.4]{BF}).  As in Theorem~\ref{thm:A}, we consider the map $R_a^2$. For $a=0$, the point $z=0$ is super-attracting of local degree 4. Let $\varsigma$ be a geodesic at $A_0^*(0)$, defined with the Böttcher coordinate.  Let $\varsigma_0^{-1}$ be the preimage of $\varsigma$ in $A_0^*(0)$ under $R_0^2$. Then $\varsigma_0^{-1}$ is also a geodesic and is mapped 4 to 1 onto $\varsigma$. 
 Since $R_a$ converges uniformly on compact sets of $\C$ to $R_0$, if $|a|$ is small enough then we can pick a connected component $\varsigma_a^{-1}$ of $R_a^{-2}(\varsigma)$ which is a continuation of $\varsigma_0^{-1}$.
Then, we can use $\varsigma_a^{-1}$ and $\varsigma$ and the curves $\beta^{out}_1(a)$ and $\beta^{out}_0(a)$ 
to perform a cut and paste surgery in which we glue the dynamics of $z^4$ near 0 and $\infty$ erasing all free critical points 
while keeping the dynamics of $R_a^2$ in the annulus bounded by $\beta^{out}_1(a)$ and $\varsigma_a^{-1}$. 
The resulting map is quasiconformally conjugate to $z^4$. Since the continuations of all periodic points of $\partial A^*_0(0)$
 and their orbits are contained in the annulus bounded by $\beta^{out}_1(a)$ and $\varsigma_a^{-1}$,
  this surgery keeps their dynamics. Moreover, since the resulting map does not have free critical points, we conclude that these periodic points are repelling.
\endproof

The next lemma tells us that the previous holomorphic motion can actually be extended to the \newr{closure} of the union of the basins of attraction of the roots under $R_0$,  $\overline {A_0(1)}\cup\overline {A_0(\zeta)}\cup\overline {A_0(\zeta^2)}$,  for all $a\in\tilde{\Lambda}$. This explains why for $|a|$ \newr{small} we can see copies of these basins of attraction on the dynamical plane of $R_a$ (see Figure~\ref{fig:dynampert}).

\begin{lemma}\label{lemma:motionbasins}
	There exists a  holomorphic motion of   $\overline {A_0(1)}\cup\overline {A_0(\zeta)}\cup\overline {A_0(\zeta^2)}$  which is parametrized by $\tilde{\Lambda}$  with  $H(a,\overline {A_0(\zeta^k)})\subset \overline{A_a(\zeta^k)}$ for $a\in\tilde{\Lambda}$ and $k=0,1,2$.
\end{lemma}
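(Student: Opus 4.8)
The plan is to build an \emph{equivariant} holomorphic motion of the three open basins $A_0(\zeta^k)$, $k=0,1,2$, and then invoke the $\lambda$-Lemma to pass to their closures. The motion will satisfy the conjugacy relation $R_a\circ H(a,\cdot)=H(a,\cdot)\circ R_0$, which automatically forces $H(a,\overline{A_0(\zeta^k)})\subset\overline{A_a(\zeta^k)}$, since a point whose $R_0$-orbit converges to $\zeta^k$ is sent to a point whose $R_a$-orbit converges to $\zeta^k$.

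First I would treat the immediate basins. For every parameter the roots of unity $\zeta^k$ are super-attracting fixed points of local degree $3$; for $a=0$ the map $R_0$ has no free critical points, and for $a\in\Lambda$ small the free critical points $c_{a,j}$ lie near $z=\infty$ (Proposition~\ref{prop:confpert}). Hence the immediate basin $A_a^*(\zeta^k)$ contains no free critical point, so the B\"ottcher coordinate $B_{a,k}\colon A_a^*(\zeta^k)\to\mathbb{D}$ conjugating $R_a$ to $w\mapsto w^3$ is defined on the whole immediate basin and depends holomorphically on $a$. Since the degree-$3$ polynomial-like structure of Lemma~\ref{lem:boundary0} persists under small perturbation, $\partial A_a^*(\zeta^k)$ remains a quasicircle, and $B_{a,k}$ extends to a homeomorphism of the closures whose boundary values vary holomorphically (one may also follow the dense repelling periodic points on $\partial A_a^*(\zeta^k)$ as in Lemma~\ref{lemma:motionboundaries}). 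Setting $H(a,z):=B_{a,k}^{-1}(B_{0,k}(z))$ on $\overline{A_0^*(\zeta^k)}$ produces an equivariant holomorphic motion of the closed immediate basins.

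Next I would extend the motion to the whole basin by equivariant pull-back. Each basin is completely invariant, so every $z\in A_0(\zeta^k)$ satisfies $R_0^n(z)\in A_0^*(\zeta^k)$ for some $n$; I define $H(a,z)$ inductively as the $R_a$-preimage of $H(a,R_0(z))$ obtained by holomorphic continuation from $a=0$, where $H(0,\cdot)$ is the identity. The crucial point, and the main obstacle, is that this continuation is well defined over a whole neighbourhood $\tilde\Lambda$ of $0$. This follows from the location of the critical data: for $a\neq 0$ the critical points $c_{a,j}$ accumulate at $z=\infty$, while the critical values $v_{a,j}$ accumulate at $z=0$, their modulus being $O(|a|^{2/3})$ by Lemma~\ref{lem:iterateanell} (see also \eqref{eq:crit} and \eqref{eq:critvalue}). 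Since the compact set $\overline{A_0(\zeta^k)}$ is disjoint from the two-cycle $\{0,\infty\}$, it lies at a positive distance from both $0$ and $\infty$; after shrinking $\tilde\Lambda$ the moved sets $H(a,\overline{A_0(\zeta^k)})$ stay at a definite distance from $\{0,\infty\}$ and therefore avoid both the critical points and the critical values of $R_a$. Consequently $R_a$ is a local biholomorphism along the entire grand orbit that must be continued, the pull-back is uniquely defined, holomorphic in $a$ and injective in $z$, and it agrees with the B\"ottcher motion on $A_0^*(\zeta^k)$ by equivariance and uniqueness of continuation. This yields an equivariant holomorphic motion on the union of the three open basins.

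Finally, since the open basins $A_0(\zeta^k)$ are pairwise disjoint and are mapped by $H(a,\cdot)$ into the pairwise disjoint basins $A_a(\zeta^k)$, the assembled map is injective; the $\lambda$-Lemma (\cite{MSS}) then extends it to a holomorphic motion of the closure $\overline{A_0(1)}\cup\overline{A_0(\zeta)}\cup\overline{A_0(\zeta^2)}$ parametrised by $\tilde\Lambda$. The inclusion $H(a,\overline{A_0(\zeta^k)})\subset\overline{A_a(\zeta^k)}$ passes to the closure by continuity, which completes the proof.
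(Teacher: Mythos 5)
Your proof is correct and follows essentially the same route as the paper: a B\"ottcher-coordinate motion on the immediate basins, equivariant pull-back to the remaining components of the basins justified by the absence of free critical points and critical values in the region swept by these basins, and the $\lambda$-Lemma to pass to the closures. The only cosmetic difference is that the paper certifies that the critical orbits stay out of the way by working inside the annulus bounded by the moved curves $\delta_0(a)$ and $\delta_\infty(a)$ of Lemma~\ref{lemma:motionboundaries}, whereas you use the explicit asymptotics of $c_{a,j}$ and $v_{a,j}$ together with compactness.
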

\proof Let $\phi_a$ be the Böttcher map of $R_a$ around the super-attracting fixed point $1$. It is well defined for $a\in\tilde\Lambda$  on  $A^*_a(1)$, the whole immediate basin of attraction  of $1$, since $A^*_a(1)$ is contained in the annulus $A_{\delta_a}$ bounded by $\delta_0(a)$ and $\delta_\infty(a)$, which   contains no free critical point (by Lemma~\ref{lemma:motionboundaries}). The map $H(a,z)=\phi_a(\phi_0^{-1}(z))$ is a holomorphic motion of the immediate basin of $1$, $A^*_0(1)$. It can be pulled back to every connected component of the basin of attraction of $1$  whose orbit never exits the annulus $A_{\delta_a}$ since $A_{\delta_a}$ does not contain any free critical point.
As a consequence, the holomorphic motion $H$ extends to the closure   $\overline {A_0(1)}$ and one easily sees that $H(a,\overline {A_0(1)})\subset\overline {A_a(1)}$. The argument is exactly the same  for $\overline {A_0(\zeta)}$ and $\overline {A_0(\zeta^2)}$.  \endproof

After  some  Lemmas    in order to  understand how  the Julia set moves with $a$, we show next that all cases of the Escape Trichotomy Theorem can be achieved and correspond to  maps $M_{\lambda(a)}$. First we introduce a technical lemma which will be useful to study the case of the Cantor set of quasicircles. Its proof is analogous to Lemma~\ref{lem:iterateanell}. 

\begin{lemma}\label{lem:iterate0}
	Let $\mathcal{C}>0$. There exists a $\mathcal{C}'>0$ such that if $|a|$ is small enough, $a\neq 0$, then $|R_a^2(z)|>\mathcal{C}'\frac{1}{|a|^{1/3}}$ for all $z$ such that $|z|<\mathcal{C}|a|^{2/3}$.
\end{lemma}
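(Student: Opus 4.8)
The plan is to mimic the proof of Lemma~\ref{lem:iterateanell}, exploiting the fact that the region $\{|z|<\mathcal{C}|a|^{2/3}\}$ is mapped by $R_a$ near the pole $z=0$, so that the first iterate $R_a(z)$ lands far out near $z=\infty$, and the second iterate is then controlled by the earlier estimate. Concretely, I would first analyze $R_a$ on the disk $\{|z|<\mathcal{C}|a|^{2/3}\}$. Writing $z=c\,a^{2/3}$ with $|c|<\mathcal{C}$ and substituting into the expression \eqref{eq:Ra}, the dominant terms come from the constant term $3-a$ in the numerator and the $3z^2(5-a)$ term in the denominator, since all powers $z^3,z^6$ are of order $|a|^2$ or higher. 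This yields
$$
R_a(c\,a^{2/3})=\frac{3-a+O(|a|^2)}{3c^2 a^{4/3}\,(5-a)+O(|a|^{2})}
=\frac{1}{5c^2}\,\frac{1}{a^{4/3}}+o\!\left(\frac{1}{|a|^{4/3}}\right),
$$
so that $|R_a(z)|$ is of order $|a|^{-4/3}$ on this disk.

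The key point is then that this image lands well inside the annulus $\mathbb{A}\!\left(\frac{1}{|a|^{1/3}},\frac{3}{|a|^{1/3}}\right)$ where Lemma~\ref{lem:iterateanell} applies, \emph{or} at least in a region on which one controls $R_a$ in the same fashion. Since $|a|^{-4/3}$ is much larger than $|a|^{-1/3}$ for $|a|$ small, the image $R_a(z)$ is in fact far outside that annulus, so I cannot directly quote Lemma~\ref{lem:iterateanell}. Instead I would redo the local computation of $R_a$ near $z=\infty$: for $|w|$ large of order $|a|^{-4/3}$, the leading behaviour of $R_a(w)$ from \eqref{eq:Ra} is $R_a(w)\sim \tfrac{2a}{3(1+a)}\,w$ (the ratio of the top-degree terms $2aw^6$ and $3(1+a)w^5$), giving $|R_a(w)|$ of order $|a|\cdot|a|^{-4/3}=|a|^{-1/3}$. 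Composing the two estimates, $|R_a^2(z)|$ is of order $|a|^{-1/3}$, which is exactly the claimed lower bound $\mathcal{C}'|a|^{-1/3}$.

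To turn this into a rigorous statement I would make each $o(\cdot)$ and $\sim$ uniform in $c$ (with $|c|\le\mathcal{C}$) and in the argument of the intermediate point, using that the relevant rational expressions are continuous and bounded away from zero on the compact parameter ranges involved; the constant $\mathcal{C}'$ is then extracted as an explicit infimum, precisely as $\mathcal{C}_1$ was extracted in Lemma~\ref{lem:iterateanell}. The main obstacle I anticipate is bookkeeping the two-step asymptotics uniformly: one must verify that the intermediate image $R_a(z)$, while large, still lands in a region where the leading-order behaviour of $R_a$ near infinity dominates uniformly over all admissible $z$, and that the error terms from the first step do not corrupt the scaling in the second. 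Once the uniform control of both compositions is in place, choosing $\mathcal{C}'$ slightly below the infimum of $\left|\tfrac{2}{15 c^2}\right|$ over $|c|\le\mathcal{C}$ (the composite leading coefficient) finishes the argument.
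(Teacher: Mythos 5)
Your argument is correct and is exactly the ``analogous computation'' the paper has in mind (the paper gives no details, merely citing Lemma~\ref{lem:iterateanell}): substituting $z=c\,a^{2/3}$ shows the first iterate lands at modulus at least of order $|a|^{-4/3}$, and the near-linear behaviour $R_a(w)\sim \tfrac{2a}{3(1+a)}w$ at the repelling fixed point $\infty$ then yields the stated lower bound of order $|a|^{-1/3}$. The only point to phrase carefully is that your expansion of $R_a(c\,a^{2/3})$ degenerates as $c\to 0$ (the image tends to $\infty$ there), so one should state the first step as a uniform \emph{lower} bound $|R_a(z)|\ge K|a|^{-4/3}$ rather than an asymptotic equality; since the second step only improves with larger $|w|$, this costs nothing.
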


In the next proposition we study the case of Cantor set of quasicircles.
\begin{proposition}\label{prop:cantcircles}
	If $|a|$ is small enough, $a\neq 0$, then $\mathcal{J}(M_{\lambda(a)})$ is a Cantor set of quasicircles.
\end{proposition}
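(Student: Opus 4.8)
The plan is to read off the combinatorics of the critical orbits through the conjugacy of Theorem~\ref{thm:A} and then apply the Escape Trichotomy Theorem of \cite{DLU}. Recall that the proof of Theorem~\ref{thm:A} produces a model map $F_a$, globally quasiconformally conjugate to $M_{\lambda(a)}$ via $\varphi_a$, which coincides with $R_a^2$ on $A(\beta_2^{in}(a),\beta_2^{out}(a))$, acts as $z^4$ on $Ext(\beta_1^{out}(a))$, and maps $Int(\beta_2^{in}(a))$ (the component of the basin of $\infty$ containing $z=0$, i.e.\ the trap door) directly into $Ext(\beta_1^{out}(a))$. Under $\varphi_a$ the immediate basin of $\infty$ for $M_{\lambda(a)}$ corresponds to the immediate basin $A^*(\infty)$ of $\infty$ for $F_a$, which contains $Ext(\beta_1^{out}(a))$, while the six critical points of $M_{\lambda(a)}$ correspond to the six critical points of $R_a^2$ lying in $A(\beta_2^{in}(a),\beta_2^{out}(a))$ (Remark~\ref{rem:critR2}). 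Hence it suffices to show that each such critical point $p$ of $R_a^2$ is carried by $F_a$ into $A^*(\infty)$ in \emph{exactly} two steps.

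First I would control the first iterate. By Remark~\ref{rem:critR2} one has $R_a(p)=c_{a,j}$ for some $j$, and from \eqref{eq:crit} the free critical point satisfies $c_{a,j}=\zeta^j\,15^{1/3}|a|^{-1/3}(1+o(1))$ as $a\to 0$. Since $1<15^{1/3}<3$, for $|a|$ small $c_{a,j}$ lies in the annulus $\mathbb{A}\left(|a|^{-1/3},3|a|^{-1/3}\right)$, so Lemma~\ref{lem:iterateanell} applies and gives
\[
|F_a(p)|=|R_a^2(p)|=|R_a(c_{a,j})|=|v_{a,j}|<\mathcal{C}_1|a|^{2/3}.
\]
Thus the critical value $v_{a,j}$ is close to $0$, so it lies deep inside the $O(1)$ quasicircle $\beta_2^{out}(a)$, on the trap-door side $Int(\beta_2^{in}(a))$; in particular $F_a(p)=v_{a,j}$ belongs to the full basin of $\infty$ but not to the immediate basin $A^*(\infty)$, which is the outer region near $\infty$. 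Hence the critical orbit has not escaped after one iterate.

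Next I would show the second iterate lands in $A^*(\infty)$. Applying Lemma~\ref{lem:iterate0} with $\mathcal{C}=\mathcal{C}_1$ to $v_{a,j}$ (which satisfies $|v_{a,j}|<\mathcal{C}_1|a|^{2/3}$) yields $|R_a^2(v_{a,j})|>\mathcal{C}'|a|^{-1/3}$. If $v_{a,j}\in A(\beta_2^{in}(a),\beta_2^{out}(a))$, then $F_a(v_{a,j})=R_a^2(v_{a,j})$ has modulus larger than $\mathcal{C}'|a|^{-1/3}$, so it lies outside the $O(1)$ curve $\beta_1^{out}(a)=\gamma_2(a)$, i.e.\ in $Ext(\beta_1^{out}(a))\subset A^*(\infty)$. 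If instead $v_{a,j}\in Int(\beta_2^{in}(a))$, then by the construction of $F_a$ near $0$ one directly gets $F_a(v_{a,j})\in Ext(\beta_1^{out}(a))\subset A^*(\infty)$. In both cases $F_a^2(p)\in A^*(\infty)$ while $p,\,F_a(p)\notin A^*(\infty)$, so the whole critical set escapes in exactly two iterates.

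By the Escape Trichotomy Theorem this places $M_{\lambda(a)}$ in the Cantor-set-of-circles case; since the invariant circles produced there are quasicircles (see \cite{McM1,DLU}) and $\varphi_a$ is quasiconformal, $\mathcal{J}(M_{\lambda(a)})$ is a Cantor set of quasicircles, as claimed. I expect the delicate point to be the bookkeeping of the \emph{immediate} basin against the full basin of $\infty$: one must certify that $v_{a,j}$ sits in $Int(\beta_2^{in}(a))\setminus A^*(\infty)$, ruling out the Cantor-set-of-points case (in which a free critical point already lies in $A^*(\infty)$, which is excluded here since, by Riemann--Hurwitz, the only critical points of $F_a$ inside $A^*(\infty)$ and inside the trap door are $\infty$ and $0$ respectively), and that a single further application of $F_a$ reaches $A^*(\infty)$, ruling out the Sierpinski-carpet case of $m>2$ iterates. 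The two modulus estimates are tailored to exactly this: Lemma~\ref{lem:iterateanell} pins the first iterate near $0$ and Lemma~\ref{lem:iterate0} throws the second iterate out to scale $|a|^{-1/3}$, well inside $Ext(\beta_1^{out}(a))$.
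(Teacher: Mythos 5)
Your overall route coincides with the paper's: identify the six critical points of $R_a^2$ in $A(\beta_2^{in}(a),\beta_2^{out}(a))$ as $R_a$-preimages of the free critical points $c_{a,j}$, use Lemma~\ref{lem:iterateanell} to pin the critical values $v_{a,j}$ at scale $|a|^{2/3}$, use Lemma~\ref{lem:iterate0} (or the explicit form of the model map on the trap door) to send the next iterate out to scale $|a|^{-1/3}$, and finish with the Escape Trichotomy. The only cosmetic difference is that the paper tracks the doubly connected component $\mathcal{A}_0(a)$ of $R_a^{-1}\left(\mathbb{A}\left(|a|^{-1/3},3|a|^{-1/3}\right)\right)$ rather than invoking the asymptotics of $c_{a,j}$ from \eqref{eq:crit}; both are fine. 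However, at the step you yourself single out as delicate --- certifying $v_{a,j}\notin A^*(\infty)$, i.e.\ ruling out the Cantor-set case --- your argument has a genuine gap. The parenthetical Riemann--Hurwitz claim is circular: it presumes that $A^*(\infty)$ is ``the outer region'' with $\infty$ as its only critical point, but a priori the immediate basin could reach across $A(\beta_2^{in}(a),\beta_2^{out}(a))$ into $Int(\beta_2^{in}(a))$. In the Cantor-set regime (which does occur for other $a\in\Lambda$, cf.\ Proposition~\ref{prop:cantpoints}) the immediate basin is the complement of a Cantor set, is infinitely connected, and contains $0$, the whole ``trap door'' and all six free critical points; Riemann--Hurwitz gives no contradiction there because $\chi(A^*(\infty))=-\infty$. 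Likewise, knowing $v_{a,j}\in Int(\beta_2^{in}(a))$ and $F_a(Int(\beta_2^{in}(a)))\subset \overline{Ext(\beta_1^{out}(a))}$ only shows $v_{a,j}$ is in the basin of $\infty$; it does not show the component of the basin containing $v_{a,j}$ is distinct from $A^*(\infty)$.

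The paper closes exactly this gap with Lemma~\ref{lemma:motionboundaries}: the curve $\delta_0(a)$, the holomorphic motion of $\partial A_0^*(0)$, is an $R_a^2$-invariant quasicircle obtained as the closure of repelling periodic points, hence contained in the Julia set, and it lies in the region where $F_a=R_a^2$. Since for $|a|$ small the disk of radius $\mathcal{C}_1|a|^{2/3}$ --- and with it every $v_{a,j}$ --- is contained in the region bounded by $\delta_0(a)$ (and $\mathcal{A}_0(a)$ lies between the inner curves and $\delta_0(a)$), this invariant Julia continuum separates the critical values from $\infty$, so they cannot belong to the immediate basin, and the escape happens in \emph{exactly} two iterates. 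To repair your proof you must add this ingredient (or an equivalent invariant separating set, e.g.\ the persistent quasicircle coming from the degree-$4$ polynomial-like restriction of $R_0^2$); without it the dichotomy between the Cantor-set and Cantor-of-circles cases of \cite{DLU} is not decided.
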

\proof

By Proposition~\ref{prop:rationallikeconf} $iv)$ and the Riemann-Hurwitz formula, we know that the annulus $A(\beta^{in}_2(a),\beta^{out}_2(a))$ contains 6 critical points and 6 zeros of $R_a^2$. Recall that, for $|a|$ small enough, the annulus $\mathbb{A}\left(\frac{1}{|a|^{1/3}}, \frac{3}{|a|^{1/3}}\right)$ contains the 3 free critical points of $R_a$ together with the 3 zeros that appear after the singular perturbation. Recall also that, by definition, $\beta^{out}_2(a)=\gamma_4(a)$, $\beta^{out}_1(a)=\gamma_2(a)$, and that $\beta^{out}_0(a)=\gamma_0(a)$.  

Furthermore,  there exists a connected component $\mathcal{A}_0(a)$ of $R_a^{-1}\left(\mathbb{A}\left(\frac{1}{|a|^{1/3}}, \frac{3}{|a|^{1/3}}\right)\right)$
 which is a doubly connected set  contained in $Int(\gamma_2(a))$ that is mapped with degree 2 onto $\mathbb{A}\left(\frac{1}{|a|^{1/3}}, \frac{3}{|a|^{1/3}}\right)$ under $R_a$, by Proposition~\ref{prop:confpert} $i)$. It follows that $\mathcal{A}_0(a)$ contains 6 critical points and 6 zeros of $R_a^2$, which correspond precisely to the 6 critical points and 6 zeros of $R_a^2$ in $A(\gamma_0''(a),\gamma_2(a))$ (notice that, by Proposition~\ref{prop:confpert}~$i)$, there is no other preimage of critical points of $R_a$ in $A(\beta^{in}_2(a),\beta^{out}_2(a))$). 

In application of Lemma~\ref{lem:iterateanell}, there exists a ${C}_1>0$ such that for $|a|$ small enough the set $R_a^2(\mathcal{A}_0(a))$ is contained in a disk of radius $\mathcal{C}_1|a|^{2/3}$. 
Since $A(\beta^{in}_1(a),\beta^{in}_2(a))$ is mapped onto $A(\beta^{out}_1(a),\beta^{out}_0(a))$ under $R_a^2$, it follows that $\mathcal{A}_0(a)\subset A(\beta^{in}(a),\delta_0(a)).$ Notice that, for $|a|$ small enough, the disk of radius $\mathcal{C}_1|a|^{2/3}$ is contained in the region bounded by $\delta_0(a)$. Moreover, by Lemma~\ref{lem:iterate0}, for $|a|$ small enough the set $\mathcal{A}_0$ is mapped under 2 iterates of $R_a^2$ onto $Ext(\gamma_0(a))$.

We can conclude that the critical points of $M_{\lambda(a)}$ are mapped under exactly 2 iterates of $M_{\lambda(a)}$ onto $A_{M_{\lambda(a)}}^*(\infty)$. It follows from the Escape Trichotomy that $\mathcal{J}(M_{\lambda(a)})$ is a Cantor set of quasicircles.

\endproof

Proposition~\ref{prop:cantcircles} gives us a condition so that the Julia set of $M_{\lambda(a)}$ is a Cantor set of quasicircles. We would like to understand  also the structure of the Julia set of the corresponding map $R_a$. The connectedness of the Julia set of the maps $O_{n,\alpha}$ obtained when applying Chebyshev-Halley methods to $z^n+c$, $c\in\com$ is studied in \cite{CCV2}. It is proven that these maps cannot have Herman rings. Moreover, the following characterization for the connectivity of $\mathcal{J}(O_{n,\alpha})$ is provided.  \newr{It}  depends on whether the immediate basin of attraction of 1, $\mathcal{A}^*_{O_{n,\alpha}}(1)$, contains extra critical points.

\begin{theorem}[\cite{CCV2}, Theorem 3.9]\label{thm:connectedjulia}
	For fixed $n\geq 2$ and $\alpha\in\com$, the Julia set $\mathcal{J}(O_{n,\alpha})$ is disconnected if and only if $\mathcal{A}^*_{O_{n,\alpha}}(1)$ contains a critical	point $c\neq 1$ and no preimage of $z = 1$ other than itself.
\end{theorem}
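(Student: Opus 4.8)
The plan is to reduce the connectivity of $\mathcal{J}(O_{n,\alpha})$ to the simple connectivity of its Fatou components, and then to analyse the immediate basin $U:=\mathcal{A}^*_{O_{n,\alpha}}(1)$ by a Riemann--Hurwitz count. The starting point is the standard topological fact that a compact set $K\subset\wcom$ is connected if and only if every component of $\wcom\setminus K$ is simply connected; applied to $K=\mathcal{J}(O_{n,\alpha})$ this says that $\mathcal{J}(O_{n,\alpha})$ is connected if and only if every Fatou component is simply connected. So the task splits into two pieces: first, locating which Fatou component can fail to be simply connected, and second, deciding exactly when $U$ is multiply connected.

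For the first piece I would use the symmetry $O_{n,\alpha}(\xi z)=\xi\, O_{n,\alpha}(z)$ for $\xi^n=1$, together with the structural results already known for this family (in particular the absence of Herman rings), to argue that the only cyclic Fatou components are the immediate basins of the roots, that these are permuted by the rotation, and that every other Fatou component is an iterated preimage of one of them. Since taking preimages of a simply connected domain along a branch carrying no critical point preserves simple connectivity, and since the free critical points are symmetric, multiple connectivity can occur somewhere in the Fatou set if and only if it already occurs in some root basin; by symmetry it is then enough to treat $U$.

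For the second piece I would note that $O_{n,\alpha}|_U:U\to U$ is a proper map (because $\partial U\subset\mathcal{J}$ is forward invariant), whose degree $d$ equals the number of preimages of $1$ lying in $U$, counted with multiplicity. The super-attracting fixed point $1$ contributes its local degree $m$ (here $m=3$), and the B\"ottcher coordinate at $1$ extends conformally outward until it meets a critical point; in particular an extra preimage of $1$ inside $U$ can appear only beyond a critical point, so condition (b) forces $d=m$ exactly. Riemann--Hurwitz then gives $\chi(U)=d\,\chi(U)-\nu$ with $\nu=\sum_{c\in U}(\deg_c O_{n,\alpha}-1)$, i.e. $\chi(U)=\nu/(d-1)$. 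When (a) holds (an extra critical point $c\neq 1$) together with $d=m$, we get $\nu>m-1=d-1$, so $U$ cannot be simply connected (for which $\chi=1$ would force $\nu=d-1$); hence $U$ is multiply connected and $\mathcal{J}(O_{n,\alpha})$ is disconnected. Conversely, if (a) fails the B\"ottcher coordinate extends to all of $U$ and $U$ is a disk; and if (a) holds but (b) fails I would build a degree-$d$ polynomial-like restriction of $O_{n,\alpha}$ around $1$, using the extra preimage of $1$ to cut out a simply connected domain whose relevant preimage component is compactly contained in it, so that the extra critical points are absorbed and $U$ is again simply connected. Either way every Fatou component is simply connected and $\mathcal{J}(O_{n,\alpha})$ is connected.

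I expect the main obstacle to be this last case: showing that an extra critical point \emph{together with} an extra preimage of $1$ still yields a simply connected basin. The Riemann--Hurwitz count is only necessary for simple connectivity, so one genuinely has to produce the polynomial-like structure, verify that the two nested domains are correctly positioned (this is exactly where the extra preimage of $1$ is used), and check by straightening that the model is a degree-$d$ polynomial with connected Julia set. A secondary technical point, needed for the first piece, is to exclude any non-basin cyclic Fatou component (rotation domains) as a carrier of multiple connectivity; here the no-Herman-ring result and a Siegel-disc/critical-point count do the job, but these verifications must be carried out uniformly over all $n\geq 2$ and $\alpha\in\com$.
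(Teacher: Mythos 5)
First, a point of comparison: the paper does not prove this statement at all --- it is imported verbatim from \cite{CCV2} (Theorem 3.9), so there is no internal proof to measure yours against. Taken on its own terms, your skeleton is reasonable: connectivity of $\mathcal{J}(O_{n,\alpha})$ is indeed equivalent to simple connectivity of every Fatou component, $O_{n,\alpha}|_U:U\to U$ is proper on the immediate basin $U=\mathcal{A}^*_{O_{n,\alpha}}(1)$, and your ``if'' direction is essentially complete: with no preimage of $1$ in $U$ other than $1$ the degree is $d=3$, an extra critical point forces $\nu\geq 3>d-1$, and Riemann--Hurwitz then rules out $\chi(U)=1$, so $U$ is multiply connected and $\mathcal{J}$ is disconnected.

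The converse, however, has two genuine gaps. (1) Your reduction ``multiple connectivity occurs somewhere iff it occurs in a root basin'' is not justified. It is false that the only periodic Fatou components are the root basins: the free critical orbit can generate its own attracting or parabolic cycles or Siegel discs for many $\alpha$. More seriously, a \emph{strictly preperiodic} component $W$ containing free critical points can be multiply connected even when all its forward images are simply connected, since Riemann--Hurwitz gives $\chi(W)=k-\nu$ for a degree-$k$ proper map onto a disc; this is precisely the mechanism producing the annular Fatou components in the McMullen-like regime of the present paper. Closing this requires using the $\mathbb Z/n$ symmetry in an essential way (e.g.\ a rotation-invariant component cannot lie in the basin of a single root, so the $n$ free critical points entering root basins are distributed one per component, giving $\chi=k-1=1$), plus a separate treatment of non-root cycles and their grand orbits; none of this is carried out. (2) In the crucial case where $U$ contains an extra critical point \emph{and} an extra preimage of $1$, Riemann--Hurwitz cannot conclude: the identity $\chi(U)=\nu/(d-1)$ presupposes finite connectivity, while a multiply connected immediate basin is automatically infinitely connected, so the count being ``consistent with $\chi=1$'' proves nothing. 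You correctly identify that a polynomial-like restriction around $z=1$, cut out using the extra preimage of $1$, is what is needed, but you do not construct it; that construction is the entire content of this implication. As written, the proposal is a plan rather than a proof.
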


Notice that $R_a$ corresponds to $O_{3,\alpha}$ with $a=5-4\alpha$. It is not difficult to see that if $a$ is such that $\mathcal{J}(M_{\lambda(a)})$ is a Cantor set of quasicircles, then no free critical point of $R_a$ can belong to the immediate basin of attraction of $1$. It then follows from Theorem~\ref{thm:connectedjulia} that $\mathcal{J}(R_a)$ is connected. Moreover, from Proposition~\ref{prop:cantcircles} we know that $\mathcal{J}(R^2_a)$ contains  an invariant Cantor set of quasicircles (which separate 0 from $\infty$). The image under $R_a$ of this Cantor set of quasicircles is another Cantor set of quasicircles which also separate $0$ from $\infty$. 
From all the previous facts we obtain the next corollary.

\begin{corollary}
	If $a\in\Lambda$ and $\mathcal{J}(M_{\lambda(a)})$ is a Cantor set of quasicircles, then  $\mathcal{J}(R_a)$ is connected and contains an invariant Cantor set of quasicircles which separate 0 from $\infty$.
\end{corollary}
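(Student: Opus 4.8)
The plan is to treat the two assertions in turn, using Theorem~\ref{thm:connectedjulia} for the connectivity and the conjugacy of Theorem~\ref{thm:A} for the embedded family of quasicircles.

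To prove that $\mathcal{J}(R_a)$ is connected I would check that the criterion for disconnectedness in Theorem~\ref{thm:connectedjulia} fails. Since $R_a$ equals $O_{3,\alpha}$ with $a=5-4\alpha$, that result says $\mathcal{J}(R_a)$ is disconnected exactly when $A^*_a(1)$ contains a free critical point together with no further preimage of $z=1$; it therefore suffices to verify that no critical point $c_{a,j}$ lies in $A^*_a(1)$. For $|a|$ small the free critical points sit near $z=\infty$, at modulus of order $|a|^{-1/3}$ by \eqref{eq:crit} (compare Proposition~\ref{prop:cantcircles}), whereas, by the proof of Lemma~\ref{lemma:motionbasins}, $A^*_a(1)$ is contained in the fixed annulus $A_{\delta_a}$ bounded by $\delta_0(a)$ and $\delta_\infty(a)$, an annulus that carries no free critical point. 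Hence the $c_{a,j}$ lie in $Ext(\delta_\infty(a))$ and cannot belong to $A^*_a(1)$; this is consistent with the Cantor-set-of-circles hypothesis, under which the Escape Trichotomy and the conjugacy $\varphi_a$ force the orbits of the $c_{a,j}$ to leave the annulus $A(\beta_2^{in}(a),\beta_2^{out}(a))$ toward the region carrying $A^*_{M_{\lambda(a)}}(\infty)$ rather than converge to the fixed point $1$. In either case Theorem~\ref{thm:connectedjulia} yields that $\mathcal{J}(R_a)$ is connected.

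For the second assertion I would set $K:=\varphi_a^{-1}(\mathcal{J}(M_{\lambda(a)}))$. By Theorem~\ref{thm:A} one has $K\subset A(\beta_2^{in}(a),\beta_2^{out}(a))$ and $K\subset\mathcal{J}(R_a^2)=\mathcal{J}(R_a)$. As $\mathcal{J}(M_{\lambda(a)})$ is a Cantor set of quasicircles and $\varphi_a$ is quasiconformal, $K$ is again a Cantor set of quasicircles; and since $\varphi_a$ fixes $0$ and $\infty$ while the components of $\mathcal{J}(M_{\lambda(a)})$ separate these two points, every component of $K$ separates $0$ from $\infty$. The conjugacy $\varphi_a\circ R_a^2=M_{\lambda(a)}\circ\varphi_a$ on the annulus together with the complete invariance of $\mathcal{J}(M_{\lambda(a)})$ shows $K$ is $R_a^2$-invariant. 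To obtain an $R_a$-invariant family I would take $S:=K\cup R_a(K)$, so that $R_a(S)=R_a(K)\cup R_a^2(K)=R_a(K)\cup K=S$. Because $K$ lies in the Julia set and carries no critical point of $R_a$, the map is a local homeomorphism along $K$ and sends each quasicircle component onto a quasicircle; by Proposition~\ref{prop:confpert}$(i)$ it maps the annulus around $0$ onto one around $\infty$ (sending $0\mapsto\infty$ with local degree $2$), so the components of $R_a(K)$ surround $\infty$ and also separate $0$ from $\infty$, with distinct components of $K$ giving distinct components of $R_a(K)$. Thus $S\subset\mathcal{J}(R_a)$ is an $R_a$-invariant Cantor set of quasicircles separating $0$ from $\infty$.

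The step I expect to be the main obstacle is making the connectivity argument fully rigorous, namely pinning down that the critical points $c_{a,j}$ indeed stay outside $A^*_a(1)$: this needs the precise location of the free critical points relative to the holomorphically moving curve $\delta_\infty(a)$, or equivalently a careful transfer of the escaping critical dynamics of $M_{\lambda(a)}$ through $\varphi_a$ and control of the orbits once they exit the conjugacy annulus. By comparison, verifying that $S$ is a Cantor family of quasicircles separating $0$ from $\infty$ is routine topological bookkeeping.
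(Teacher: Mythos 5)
Your proposal follows the paper's argument essentially verbatim: connectivity is obtained from Theorem~\ref{thm:connectedjulia} after checking that no free critical point of $R_a$ lies in the immediate basin of $1$, and the invariant Cantor set of quasicircles is $\varphi_a^{-1}(\mathcal{J}(M_{\lambda(a)}))$ together with its image under $R_a$. Your filling-in of details (the location of the $c_{a,j}$ relative to $\delta_\infty(a)$, and the bookkeeping showing $S=K\cup R_a(K)$ is $R_a$-invariant and still a Cantor set of quasicircles separating $0$ from $\infty$) is consistent with, and somewhat more explicit than, what the paper records.
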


Next we study the case of the Cantor set of points. Recall that $\Lambda$ (see Definition~\ref{def:lambda}) is defined as an open simply connected set containing $a=0$ such that  $\gamma_2(a)$ can be continued and contains no critical value (and, hence, $\gamma_4(a)$ is well defined). Since the set of parameters for which the fixed points $x_{a,j}$  does not surround $a=0$ (see Remark \ref{rem:stabilityfixed}) we can choose  $\Lambda$ to contain parameters $a$ such that the critical values lie in $\gamma_4(a)$. It would follow directly that the critical values of $M_{\lambda(a)}$ lie in $A^*_{M_{\lambda(a)}}\newr{(\infty)}$ and, by the Escape Trichotomy,   $\mathcal{J}(M_{\lambda(a)})$ is a Cantor set of points. More specifically, we can prove the following. 

\begin{proposition}\label{prop:cantpoints}
	Let $a\in\Lambda\setminus\{0\}$. Let $A^0:=A(\gamma_2(a), \gamma_0(a))$ and let $A^1:=A(\gamma_4(a), \gamma_2(a))$. Define recursively $A^{n+1}$ as the connected component of $R^{-2}_a(A^n)$ which separates $0$ and $\infty$ and shares a boundary component with $A^n$. Then, $\mathcal{J}(M_{\lambda(a)})$ is a Cantor set of points if, and only if, the critical values of $R_a$ belong to $\overline{A^n}$ for some $n\geq1$. 
\end{proposition}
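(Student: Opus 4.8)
The plan is to combine the Escape Trichotomy Theorem with the conjugacy furnished by Theorem~\ref{thm:A}, reducing the statement to a question about the position of the critical value, which is then decided by the cascade of annuli $A^n$.

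\emph{Reduction.} By the Escape Trichotomy, $\mathcal J(M_{\lambda(a)})$ is a Cantor set of points if and only if the critical values of $M_{\lambda(a)}$ belong to $A^*_{M_{\lambda(a)}}(\infty)$ (in the remaining two cases the critical value lies outside $A^*_{M_{\lambda(a)}}(\infty)$, entering it only after further iterates). I would transport this condition through $\varphi_a$. The model map $F_a$ constructed in the proof of Theorem~\ref{thm:A} is globally conjugate to $M_{\lambda(a)}$ by $\varphi_a$, coincides with $R_a^2$ on $A(\beta_2^{in}(a),\beta_2^{out}(a))$, and its only free critical points are the six critical points of $R_a^2$ lying there, which by Remark~\ref{rem:critR2} are sent by $R_a^2$ onto the three critical values $v_{a,j}$ of $R_a$. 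Hence the critical values of $M_{\lambda(a)}$ are the $\varphi_a(v_{a,j})$, and $\varphi_a^{-1}\!\left(A^*_{M_{\lambda(a)}}(\infty)\right)$ is the immediate basin of $\infty$ of $F_a$, which I denote $A^*(\infty)$. Since all the data are invariant under $\mathbb U$ and $\gamma_0(a)$ was taken $\mathbb U$-symmetric, the three $v_{a,j}$ are permuted by the rotations, so it suffices to track one. The claim becomes: \emph{$v_{a,j}\in A^*(\infty)$ if and only if $v_{a,j}\in\overline{A^n}$ for some $n\ge1$.}

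\emph{The annuli are fundamental annuli of the immediate basin.} Writing $\beta_0^{out}(a)=\gamma_0(a)$, $\beta_1^{out}(a)=\gamma_2(a)$, $\beta_2^{out}(a)=\gamma_4(a)$ (as in the proof of Proposition~\ref{prop:rationallikeconf}), the base annulus $A^0=A(\gamma_2(a),\gamma_0(a))$ lies in $Ext(\gamma_2(a))$, where $F_a$ is conjugate to $z\mapsto z^4$; hence $A^0\subset A^*(\infty)$. I would then show by induction that, \emph{so long as} $v_{a,j}\notin\overline{A^m}$ for $m\le n$, the annuli $A^1,\dots,A^n$ contain no critical point of $R_a^2$ — a critical point can appear in $A^{m+1}$ only once its image $v_{a,j}$ has entered $A^m$ — so each $R_a^2:A^{m+1}\to A^m$ is an unbranched degree-$4$ covering, each $A^m=A(\gamma_{2m+2}(a),\gamma_{2m}(a))$ is a quasicircle-bounded annulus around $0$ inside $\gamma_2(a)$, consecutive annuli abut along $\gamma_{2m+2}(a)$ with no gaps, and, since $R_a^2$ carries $A^m$ in $m$ steps onto $A^0$ while the whole chain stays connected to $\infty$ through $Ext(\gamma_2(a))$, each $A^m\subset A^*(\infty)$. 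This gives the easy implication: if $v_{a,j}\in\overline{A^N}$ for some $N\ge1$, then $v_{a,j}\in A^*(\infty)$, so $\mathcal J(M_{\lambda(a)})$ is a Cantor set of points.

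\emph{The converse, by contraposition.} Suppose $v_{a,j}\notin\overline{A^n}$ for every $n\ge1$. Then no critical point ever enters the cascade, so the recursion yields an infinite nest of unbranched degree-$4$ fundamental annuli $A^n=A(\gamma_{2n+2}(a),\gamma_{2n}(a))$ whose interiors decrease to a continuum $K=\bigcap_n\overline{Int(\gamma_{2n}(a))}\ni 0$. Because these annuli are critical-point free, $\bigcup_{n\ge0}\overline{A^n}$ is exactly the part of $A^*(\infty)$ lying inside $\gamma_0(a)$, and $K$ is its complement there (under $\varphi_a$ it maps to the part of $\wcom\setminus A^*_{M_{\lambda(a)}}(\infty)$ inside $\varphi_a(\gamma_2(a))$, which holds the Julia set and the trap door). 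Now $v_{a,j}\in Int(\gamma_2(a))$ — by the very choice of $\Lambda$ no critical value reaches $\gamma_2(a)=\beta_1^{out}(a)$ — yet lies in none of the $A^n$, so it must belong to $K$, hence outside $A^*(\infty)$. Thus $\mathcal J(M_{\lambda(a)})$ is not a Cantor set of points, which closes the equivalence. This also explains why the bound is $n\ge1$: since $v_{a,j}\in Int(\gamma_2(a))$ it can never meet $A^0\subset Ext(\gamma_2(a))$.

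\emph{Main obstacle.} The delicate point is the bookkeeping of the two middle steps: verifying that each recursively defined $A^{n+1}$ is a single annulus separating $0$ from $\infty$ and covering $A^n$ by an \emph{unbranched} degree-$4$ map for exactly as long as the critical value has not surfaced, and that these annuli genuinely exhaust the inner part of the immediate basin, so that ``$v_{a,j}$ in no $A^n$'' forces $v_{a,j}$ into $K$ without leaving a gap. This rests on tracking carefully, through $\varphi_a$ and the model $F_a$, which region corresponds to the immediate basin and which to the trap door of $M_{\lambda(a)}$, together with the elementary but crucial observation that a critical point of $R_a^2$ can enter $A^{n+1}$ only once its critical value $v_{a,j}$ already lies in $A^n$.
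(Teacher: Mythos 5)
Your proposal is correct and follows the same overall skeleton as the paper's proof: identify the $A^n$ as the escape cascade of annuli, get the forward implication by pushing the critical values into $A^*_{M_{\lambda(a)}}(\infty)$ through the surgery and invoking the Escape Trichotomy, and prove the converse by contraposition. The one place where you genuinely diverge is the final step of the converse. The paper keeps the argument on the dynamical side: assuming the critical values miss every $\overline{A^n}$, Riemann--Hurwitz forces each $A^{n+1}$ to be an unbranched annulus, the nested annuli accumulate on an invariant curve $\widehat{\delta}_a$ lying in $\mathcal{J}(R_a)$, and a quasiconformal copy of that curve sits inside $\mathcal{J}(M_{\lambda(a)})$, which therefore cannot be a Cantor set of points. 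You instead locate the critical value in the residual continuum $K=\bigcap_n\overline{Int(\gamma_{2n}(a))}$ and argue it misses $A^*(\infty)$, then re-invoke the trichotomy. Both work, but your route leans on the assertion that $\bigcup_n\overline{A^n}$ exhausts $A^*(\infty)\cap Int(\gamma_0(a))$, which is true but is exactly the point that deserves an argument (a path in $A^*(\infty)$ from a point of $K$ to $\infty$ would have to cross the full continuum $\partial K$, which lies in $\mathcal{J}$); the paper's version sidesteps this by only needing that the Julia set contains a continuum. Your explicit bookkeeping that a critical point of $R_a^2$ can enter $A^{n+1}$ only after its critical value has entered $A^n$ is a useful clarification of what the paper compresses into the Riemann--Hurwitz sentence, and your observation that $v_{a,j}\in Int(\gamma_2(a))$ explains the restriction to $n\geq 1$, which the paper leaves implicit.
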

\proof
First we will mention why these sets are well defined. The fact that given $A^n$ there exists a connected component of $R^{-2}_a(A^n)$ satisfying \newr{the above conditions} follows inductively from the fact that $A^0$ and $A^1$ satisfy these conditions. Notice that $0$ cannot belong to any $A^n$ since it is mapped under  $R^{2}_a$  to $\infty$.

It is not difficult to see that the sets $A^n$ are sent to $A^*_{M_{\lambda(a)}}(\infty)$ under the surgery construction that defines $M_{\lambda(a)}$. Moreover, the critical values of $R_a$ coincide with the image under $R_a^2$ of the 6 critical points of $R_a^2$ which appear near 0 (and are preserved by the surgery construction\newr{)}. Therefore, if the critical values of $R_a$ lie in $\overline{A^n}$ for some $n\geq1$ we obtain that the critical values of $M_{\lambda(a)}$ lie in $A^*_{M_{\lambda(a)}}(\infty)$. By the Escape Trichotomy we can conclude that $\mathcal{J}(M_{\lambda(a)})$ is a Cantor set of points

Assume that there is no $n$ such that the critical values of $R_a$ belong to $\overline{A^n}$.  Then, by the Riemann-Hurwitz Formula, the sets $A^n$ are doubly connected. Moreover $\partial A^n\cap \partial A^{n+1}$ is a quasicircle and $A^{n+1}$ lies in the bounded component of $\com \setminus A^{n}$. Notice also that for all $n>1$ we have $A^n\subset A(\beta_2^{in}(a), \beta_2^{out}(a))$ (compare Proposition~\ref{prop:rationallikeconf}). In the limit, the sets $A^{n}$ need to accumulate on an invariant curve $\widehat{\delta}_a\subset A(\beta_2^{in}(a), \beta_2^{out}(a))$ which belongs to the Julia set of $R_a$. A quasiconformal copy of this curve will belong to $\mathcal{J}(M_{\lambda(a)})$. Therefore, $\mathcal{J}(M_{\lambda(a)})$ cannot be a Cantor set of points. 

We would like to point out that if $\widehat{\delta}_a$ contains no critical point, then it coincides with the $\delta_0(a)$ introduced in Lemma~\ref{lemma:motionboundaries}.

\endproof

Finally, we study the case of the Sierpinski carpet case. 

\begin{proposition}\label{prop:Sierpinski}
	There exists $a^*\in\Lambda\cap\mathbb{R^-}$ such that $\mathcal{J}(M_{\lambda(a^*)})$ is a Sierpinski carpet.  
\end{proposition}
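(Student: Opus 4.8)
The plan is to produce $a^*$ by an intermediate–value argument along the real segment $\Lambda\cap\mathbb{R}^-$, interpolating between the Cantor–set–of–circles regime of Proposition~\ref{prop:cantcircles} and the Cantor–set–of–points regime of Proposition~\ref{prop:cantpoints}. Throughout I restrict to real parameters $a<0$, for which $R_a$ has real coefficients; choosing the index $j$ so that $c_{a,j}$, and hence the critical value $v_{a,j}=R_a(c_{a,j})$, are real, Theorem~\ref{thm:A} lets me read off the type of $\mathcal{J}(M_{\lambda(a)})$ from the escape behaviour of $v_{a,j}$, which corresponds under $\varphi_a$ to a critical value of $M_{\lambda(a)}$. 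Let $N(a)$ be the number of iterates of $M_{\lambda(a)}$ needed to send this critical value into $A^*_{M_{\lambda(a)}}(\infty)$. The key observation is that $M_{\lambda(a)}^{-1}\bigl(A^*_{M_{\lambda(a)}}(\infty)\bigr)$ has exactly two components, namely $A^*_{M_{\lambda(a)}}(\infty)$ itself (covered with degree $4$, as $\infty$ is super-attracting of local degree $4$) and the trap door $T\ni 0$ (covered with degree $2$, as $0$ is a double pole). Hence, via the Escape Trichotomy together with Proposition~\ref{prop:cantpoints}, $\mathcal{J}(M_{\lambda(a)})$ is a Cantor set of points when $N(a)=0$ (the critical value lies in $A^*_{M_{\lambda(a)}}(\infty)$), a Cantor set of circles when $N(a)=1$ (the critical value lies in $T$), and a Sierpinski carpet precisely when $N(a)\ge 2$, which happens exactly when the critical value lands in a bounded escaping Fatou component different from $T$.

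First I fix the two endpoints. By Proposition~\ref{prop:cantcircles}, for $|a|$ small and $a<0$ the set $\mathcal{J}(M_{\lambda(a)})$ is a Cantor set of quasicircles, so $N(a)=1$ there; fix such a parameter $a_0$. At the other end, by Proposition~\ref{prop:cantpoints} and the discussion preceding it one can choose a real negative parameter $a_1\in\Lambda$ for which $v_{a_1,j}\in\overline{A^1}=\overline{A(\gamma_4(a_1),\gamma_2(a_1))}\subset A^*_{M_{\lambda(a_1)}}(\infty)$, so that $N(a_1)=0$ and $\mathcal{J}(M_{\lambda(a_1)})$ is a Cantor set of points. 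Both endpoints lie on the connected segment $I:=[a_1,a_0]\subset\Lambda\cap\mathbb{R}^-$.

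Now I run the interpolation. The curves $\gamma_k(a)$ and $\delta_0(a)$, the trap door, and their relevant preimages all move continuously with $a$ via the holomorphic motions of Lemmas~\ref{cont:gamma1234} and~\ref{lemma:motionboundaries}, and $a\mapsto v_{a,j}$ is real–analytic. Consequently the sets $W_k:=\{a\in I: N(a)=k\}$ are open, since membership in a prescribed escaping Fatou component is an open condition and the escape time is locally constant on the escaping set. We have $a_0\in W_1$ and $a_1\in W_0$, so $W_0$ and $W_1$ are disjoint, open and non-empty; as $I$ is connected it cannot equal $W_0\cup W_1$, and there is a parameter $a^*\in I\setminus(W_0\cup W_1)$. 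If the critical value of $M_{\lambda(a^*)}$ escapes, then $N(a^*)\ge 2$ and, by the observation of the first paragraph, $\mathcal{J}(M_{\lambda(a^*)})$ is a Sierpinski carpet, as required.

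The main obstacle is precisely the hypothesis in that last sentence: I must guarantee that the intermediate parameter can be taken so that the critical orbit genuinely escapes, rather than being non-escaping (which would make the Escape Trichotomy inapplicable) or landing exactly on $\mathcal{J}(M_{\lambda(a^*)})$. The structural fact to exploit is that $T$ and $A^*_{M_{\lambda(a)}}(\infty)$ are never adjacent: they are separated by the annular region carrying $\mathcal{J}(M_{\lambda(a)})$, whose innermost boundary is the invariant quasicircle $\delta_0(a)$ of Lemma~\ref{lemma:motionboundaries}, which bounds the trap door. Tracking the real critical value as $a$ increases in modulus from $a_0$, one sees that as $v_{a,j}$ crosses $\delta_0(a)$ out of $T$, its image $R_a^2(v_{a,j})$ no longer falls directly into $A^*_{M_{\lambda(a)}}(\infty)$ but into the annulus carrying the Julia set, so $N(a)$ jumps from $1$ to a value $\ge 2$ over an open range of parameters before the trap–door structure degenerates into the Cantor–points regime. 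Making this monotonicity precise on the real line—namely showing that $v_{a,j}$ sweeps through a level–two escaping component, i.e.\ a component of $M_{\lambda(a)}^{-1}(T)$, as $a$ runs through a subinterval of $I$—pins down an open set of Sierpinski parameters and completes the proof.
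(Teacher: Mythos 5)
There is a genuine gap, and it is the one you yourself flag: the connectedness argument only produces a parameter $a^*\in I\setminus(W_0\cup W_1)$, i.e.\ one at which the critical value of $M_{\lambda(a^*)}$ is neither in $A^*_{M_{\lambda(a^*)}}(\infty)$ nor in the trap door. Such a parameter need not be a Sierpinski parameter: the critical orbit could fail to escape altogether (land in the Julia set, or in an attracting basin), in which case the Escape Trichotomy says nothing. Your closing paragraph correctly identifies that one must exhibit a parameter whose critical value lies in a component of $M_{\lambda(a)}^{-1}(T)$, but the claim that ``$N(a)$ jumps from $1$ to a value $\ge 2$ over an open range of parameters'' as $v_{a,j}$ crosses $\delta_0(a)$ is not justified: just outside the trap door the preimages of $T$ of all orders, Julia points, and possibly non-escaping Fatou components are interleaved, so crossing $\partial T$ gives no control on where the forward orbit goes. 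Announcing that ``making this monotonicity precise \dots completes the proof'' leaves precisely the substantive step unproved. (A secondary issue: the holomorphic motion of $\delta_0$ from Lemma~\ref{lemma:motionboundaries} is only guaranteed on a small $\tilde\Lambda$, and in fact it degenerates exactly at the parameter $a_q$ where the critical value reaches $\delta_0(a)$, so your continuity statements do not automatically hold on all of $I$.)

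The paper closes this gap by replacing the soft topological argument with a concrete real intermediate value argument for an \emph{equation}, not an open condition. Restricting to $a\in(-1,0)$, it shows $c_{a,0}\in\R^-$ and $v_{a,0}\in\R^+$, that $R_a|_{\R^+}$ is monotone decreasing with no attracting or parabolic cycle in $\R^+$ (by the rotational symmetry), and that the real period-two cycle $\{q_0(a),q_\infty(a)\}$ of $R_0|_{\R^+}$ persists on all of $(-1,0)$; this defines $a_q$ by $v_{a_q,0}=q_0(a_q)$ and guarantees $(a_q,0)\subset\Lambda$. Then $a\mapsto R_a^2(v_{a,0})$ is continuous, tends to $-\infty$ as $a\to 0^-$ (Lemma~\ref{lem:iterate0}) and equals $q_0(a_q)>0$ at $a_q$, so it vanishes at some $a^*$. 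The condition $R_{a^*}^2(v_{a^*,0})=0$ forces the critical value onto the pole, hence into a preimage of the trap door, so the critical points escape in exactly $3$ iterates and the Sierpinski case of the trichotomy applies. This is exactly the ``sweep through a component of $M_{\lambda(a)}^{-1}(T)$'' that your sketch calls for; without an argument of this kind your proposal does not establish the proposition.
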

\proof

It follows from the Escape Trichotomy Theorem that $\mathcal{J}(M_\lambda)$ is a Sierpinski carpet if, and only if, all critical points of $M_\lambda$ are mapped into $A^*_{M_\lambda}(\infty)$ in $m>2$ iterates.  Therefore, in order to prove the existence of a parameter $a^*$ for which $\mathcal{J}(M_\lambda(a^*))$ is a Sierpinski carpet it is enough to prove that the 6 critical points of $R_{a^*}^2$ which lie in $A(\beta^{in}_2(a^*),\beta^{out}_2(a^*))$ (see Proposition~\ref{prop:rationallikeconf} $iv)$, c.f.\ Proposition~\ref{prop:cantcircles}) are mapped in exactly 2 iterates of $R_{a^*}^2$ onto $z=0$ and thus they are mapped in 3 iterates onto $z=\infty$. It can easily be shown that in this case the origin is not contained in $A^*_{M_\lambda}(\infty)$.


Let $a\in\Lambda$. Let us denote the 6 critical points of $R_{a}^2$ which lie in $A(\beta^{in}_2(a),\beta^{out}_2(a))$ by $\tilde{c}_{a,k}$, $k=0,\cdots, 5$.
The critical points $\tilde{c}_{a,k}$ of $R_{a}^2$ are precisely the preimages in   $A(\beta^{in}_2(a),\beta^{out}_2(a))$ of the critical points $c_{a,j}, \, j=0,1, 2$ (see \eqref{eq:crit}). In particular, the images under $R_a^2$ of  $\tilde{c}_{a,k}$ coincides with the images under $R_a$ of $c_{a,j}$, that is, the critical values $v_{a,j}$ (see \eqref{eq:critvalue}). Therefore, we need to prove that there exists $a^*\in\Lambda$ such that $R^2_{a^*}(v_{a^*,j})=0$.

Hereafter  we restrict  to real parameters $a\in(-1,0)$.  As we mention before, the 3 free critical points of $R_a$ are denoted by $c_{a,j}$ (see  \eqref{eq:crit}) and the corresponding critical values by $v_{a,j}=R_a(c_{a,j})$ (see \eqref{eq:critvalue}). It is easy to check that $c_{a,0}$ is real and negative  and $v_{a,0}$ is real and positive for $ -1 < a < 0$.
%
%

Since \newr{every} attracting and parabolic cycle must contain a critical point in \newr{its} immediate basins of attraction, it follows that the real map $R_a$,  $a\in(-1,0)$, cannot have any attracting or parabolic cycle completely contained in $\mathbb{R^+}$. Indeed, if such a cycle exists, the critical point $c_{a,0}\in\mathbb{R}^-$ would belong to the immediate basin of attraction $\mathcal{A}^*(y)$ of an attracting or parabolic periodic point $y\in \mathbb{R}^+$. However, this is impossible since, by symmetry with respect to rotation by a third root of the unity, the critical points $c_{a,1}=\zeta c_{a,0}$ and $c_{a,2}=\zeta^2 c_{a,0}$ would belong to the immediate basins of attraction  $\mathcal{A}^*(\zeta y)$ and $\mathcal{A}^*(\zeta^2 y)$, where $\zeta=e^{2\pi i/3}$. Also by symmetry, the Fatou components $\mathcal{A}^*(y)$, $\mathcal{A}^*(\zeta y)$, and $\mathcal{A}^*(\zeta^2 y)$ would have non-empty intersection, which is impossible. 

If $a=0$ the map $R_0|_{\mathbb{R^+}}$ is strictly decreasing and satisfies $\lim_{x\rightarrow 0^+} R_0(x)=+\infty$ and  $\lim_{x\rightarrow +\infty} R_0(x)=0$ (notice that $x=1$ is super-attracting of local degree 3 and that there are no free critical points). It follows that the intersection of the immediate basin of attraction of 1, $A^*_{0}(1)$, with the real line consists of a period two cycle $\{q_0, q_{\infty}\}$ such that $0<q_0<1<q_\infty$. It is not difficult to see that $q_0$ is precisely the intersection of the curve  $\delta_0(0)=\partial A^*_0(0)$ (compare Lemma~\ref{lemma:motionboundaries}) with $\mathbb{R}^+$. After perturbation, for $|a|$ small, there is a holomorphic motion of the periodic point $q_0(a)$ (as well as the curve $\delta_0(a)$). Moreover, since for $a\in(-1,0)$ there can be no parabolic cycle completely contained in $\mathbb{R}^+$, the holomorphic motion of  $q_0(a)$ (and $q_\infty(a)$) is well defined for all $a\in (0,1)$ and we have $0<q_0(a)<1<q_\infty(a)$. Notice that when we move $a$ from $0$ up to $-1$ the critical value $v_{a,0}$ moves from $0$ up to $+\infty$. Therefore, we can define $a_q$ as the parameter in $(-1,0)$ such that  $0<v_{a,0}<q_0(a)$ if $a_q<a<0$ and $v_{a_q,0}=q_0(a_q)$.

The periodic point $q_0(a)$ and the parameter $a_q$ are important because they \newr{give} us a dynamical condition that we can control in order to ensure that a parameter $a\in(-1,0)$ is in the set $\Lambda$. 
Indeed,   before perturbation the point $q_0$ lies between $x=0$ and $\gamma_2\cap \R^+$ (compare Figure~\ref{fig:gamma}). The set of parameters $\Lambda$ is defined as an open simply connected set of parameters such that the holomorphic motion  $\gamma_2(a)$ is well defined and $\gamma_2(a)$ contains no critical value (see Definition~\ref{def:lambda}). Since the fixed points $x_{a,j}$ are repelling in the complement of the closed disk of centre -5 and radius 2 (see Remark~\ref{rem:stabilityfixed}), it follows that $\Lambda$ can be chosen to include a neighbourhood of the interval $(a_q, 0)$ (see Figure~\ref{fig:curveLambda}).

Now we can easily prove the existence of the parameter $a^*$. If $a\in(-1,0)$ the function $R_a|_{\R^+}$ is monotonous decreasing (it has no other critical point than $x=1$). When $x$ increases from 0 to 1,  $R_a(x)$ decreases from $+\infty$ down to 1.  
 When $x$ increases from 1 to $+\infty$,  $R_a(x)$ decreases from $1$ down to $-\infty$. Since $v_{a,0}$ tends to 0 when $a$ tends to 0, it is not difficult to show that $R_a(v_a,0)\rightarrow +\infty$ and  $R_a^2(v_a,0)\rightarrow -\infty$ when $a$ tends to 0 (compare with  Lemma~\ref{lem:iterate0} and proof of Proposition~\ref{prop:cantcircles}).
 Since for $a_q$ we have that  $v_{a_q,0}=q_0(a_q)$ and, hence, $R^2_{a_q}(v_{a_q,0})=q_0(a_q)>0$, we conclude that there is a parameter $a^*\in(a_q,0)\subset \Lambda$ such that $R^2_{a^*}(v_{a^*,0})=0$. This finishes the proof.

\endproof

Following the proof of Proposition~\ref{prop:Sierpinski}, in Figure~\ref{fig:dyn_plane} we show numerical examples of the three cases of the Escape Trichotomy with $a\in(-1,0)$. First we take $a$ negative and small enough ($a=-0.0003$) to show an example of a Cantor set of quasicircles. Then we take $a=-0.0164$, which is close to the $a^*$, to show the Sierpinski carpet case. Finally, we take $a=-0.028$, which is slightly smaller than the $a_q$, to show the Cantor set case (compare Figure~\ref{fig:curveLambda}). Notice that the parameter $a_q$ is precisely the limit until which the holomorphic motion $\delta_0(a)$ of the immediate basin of attraction of 0 for $a=0$ is well defined (see Lemma~\ref{lemma:motionboundaries}). Indeed, for $a_q$ the critical value $v_{a_q,0}$ coincides with the periodic point $y_0(a_q)$ (see proof of Proposition~\ref{prop:Sierpinski}). 

\subsection{Chebyshev-Halley methods applied to $z^n-1$}

\begin{figure}[t]
	\centering
	\subfigure{
		\includegraphics[width=170pt]{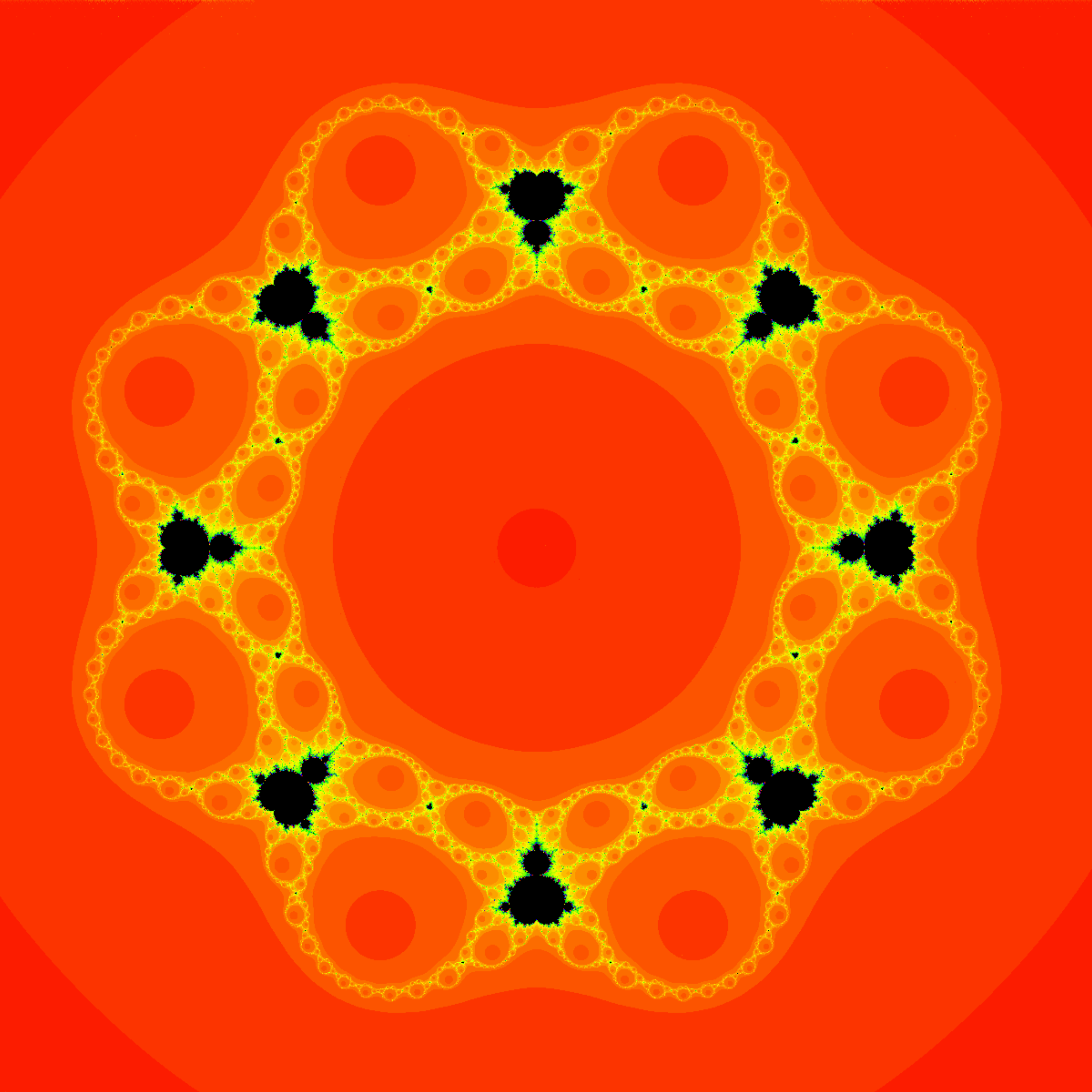}}
	\hspace{0.1in}
	\subfigure{
		\includegraphics[width=170pt]{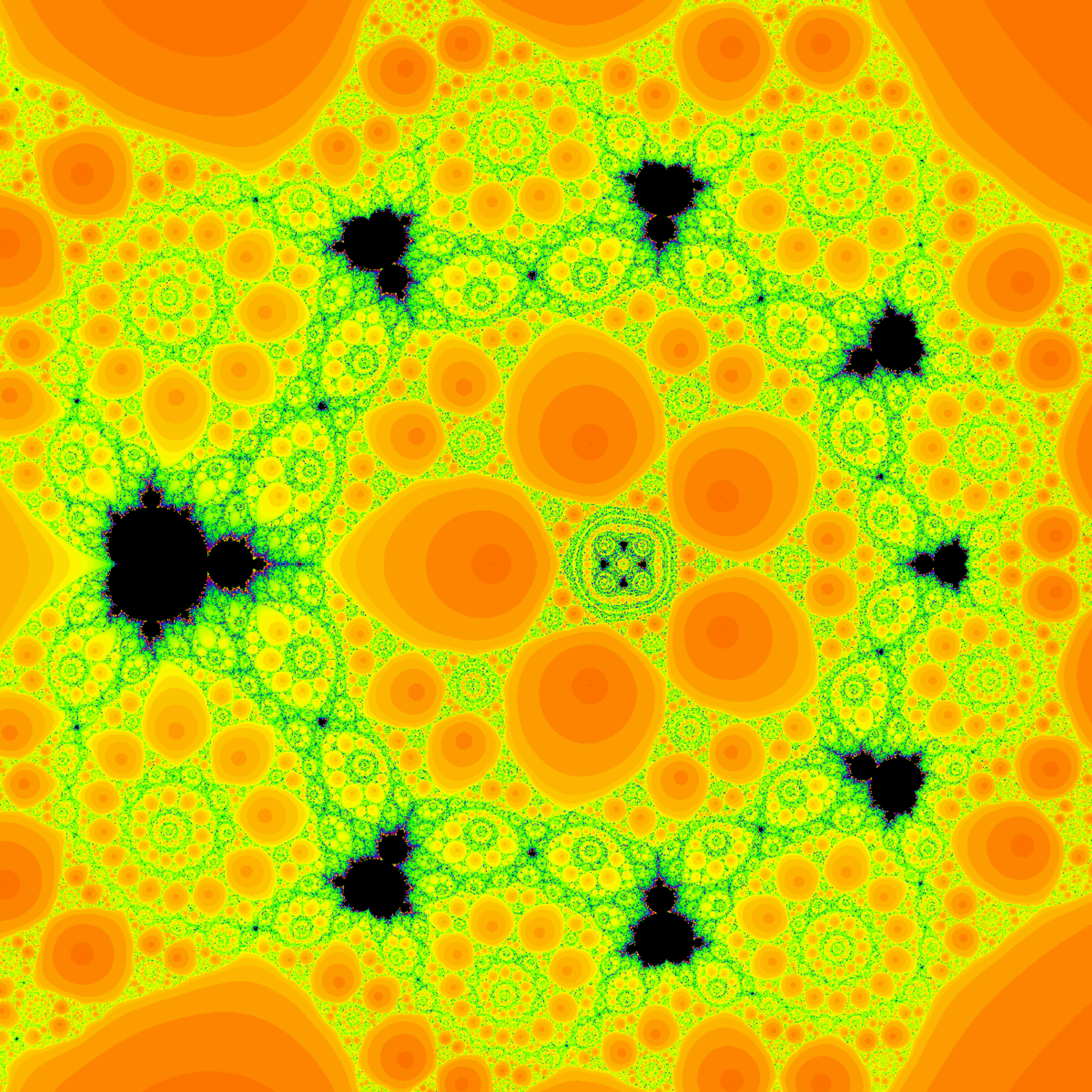}}
	\caption{\small{In the left side we show the parameter plane of  $M_{9,3,\lambda}$  and in the right side the parameter plane of  $O_{4,\alpha }$ near  $\alpha= 7/6$.} \label{fig:paramn4}}   
\end{figure}

We finish the paper with a remark. We have done the study of the Chebyshev-Halley methods applied to $z^3-1$. However, similar singular perturbations can be observed when these methods are applied to $z^n-1$ with $n\geq 3$. 
The operator obtained when applying Chebyshev-Halley methods to $z^n-1$ is given by the degree $2n$ rational map
{\small
\begin{equation*}
	O_{n,\alpha }(z)=z-\frac{(z^{n}-1)((-1+2\alpha +n-2\alpha n)+(1-2\alpha
		-3n+2\alpha n)z^{n})}{2nz^{n-1}(\alpha (n-1)(z^{n}-1)-nz^{n})}=
\end{equation*}
\begin{equation*}
	=\frac{(1-2\alpha )(n-1)+(2-4\alpha -4n+6\alpha n-2\alpha
		n^{2})z^{n}+(n-1)(1-2\alpha -2n+2\alpha n)z^{2n}}{2nz^{n-1}(\alpha
		(1-n)+(-\alpha -n+\alpha n)z^{n})},  \label{eq:Opn}
\end{equation*}
}

\noindent where $\alpha\in\mathbb{C}$. As in the degree 3 case,  these maps are symmetric with respect to $n$th roots of the unity and have a unique free critical orbit modulo symmetry (see \cite{CCV1, CCV2} for an introduction to the dynamics of these maps). The point $z=0$ is mapped onto $\infty$ with degree $n-1$ under $	O_{n,\alpha }$. If $\alpha\neq (2n-1)/(2n-2)$, then $z=\infty$ is a fixed point. However, if $\alpha= (2n-1)/(2n-2)$, then $z=\infty$ is mapped onto $z=0$ with degree $n-1$. As we have done for $n=3$, this can be studied from the point of view of singular perturbations. If $\alpha= (2n-1)/(2n-2)$, the point $z=0$ is a super-attracting fixed point of local degree $(n-1)^2$ of	$O^2_{n,\alpha}$. 
On the other hand, if $\alpha\neq (2n-1)/(2n-2)$ the point $z=0$ is mapped with degree $n-1$ onto $z=\infty$ under $O^2_{n,\alpha}$. It follows that, as we obtain for $n=3$, the dynamics near $z=0$ for parameters close to $\alpha= (2n-1)/(2n-2)$ can be related with the dynamics of the McMullen maps $M_{(n-1)^2, n-1,\lambda}(z)= z^{(n-1)^2}+\lambda/z^{n-1}$. In Figure~\ref{fig:paramn4} we show the parameter plane of  $O^2_{4,\alpha}$ near $\alpha= 7/6 $ and the parameter plane of the corresponding McMullen map $M_{9,3,\lambda}$.


\bibliography{bibliografia}

\def\cprime{$'$} \def\polhk#1{\setbox0=\hbox{#1}{\ooalign{\hidewidth
  \lower1.5ex\hbox{`}\hidewidth\crcr\unhbox0}}}
\begin{thebibliography}{10}

\bibitem{Ah}
L.~V. Ahlfors.
\newblock {\em Lectures on quasiconformal mappings}, volume~38 of {\em
  University Lecture Series}.
\newblock American Mathematical Society, Providence, RI, second edition, 2006.

\bibitem{BF}
B.~Branner and N.~Fagella.
\newblock {\em Quasiconformal surgery in holomorphic dynamics}, volume 141 of
  {\em Cambridge Studies in Advanced Mathematics}.
\newblock Cambridge University Press, 2014.

\bibitem{CCV1}
B.~Campos, J.~Canela, and P.~Vindel.
\newblock Convergence regions for the {C}hebyshev-{H}alley family.
\newblock {\em Commun Nonlinear Sci Numer Simulat}, 56:508--525, 2018.

\bibitem{CCV2}
B.~Campos, J.~Canela, and P.~Vindel.
\newblock Connectivity of the julia set for the chebyshev-halley family on
  degree n polynomials.
\newblock {\em Commun Nonlinear Sci Numer Simulat}, 82:105026, 2020.

\bibitem{CTV}
A.~Cordero, J.~R. Torregrosa, and P.~Vindel.
\newblock Dynamics of a family of {C}hebyshev-{H}alley type methods.
\newblock {\em Appl. Math. Comput.}, 219(16):8568--8583, 2013.

\bibitem{DLU}
R.~L. Devaney, D.~M. Look, and D.~Uminsky.
\newblock The escape trichotomy for singularly perturbed rational maps.
\newblock {\em Indiana Univ. Math. J.}, 54(6):1621--1634, 2005.

\bibitem{DH1}
A.~Douady and J.~H. Hubbard.
\newblock On the dynamics of polynomial-like mappings.
\newblock {\em Ann. Sci. \'Ecole Norm. Sup. (4)}, 18(2):287--343, 1985.

\bibitem{Ly3}
M.~Lyubich.
\newblock Feigenbaum-coullet-tresser universality and milnor's hairiness
  conjecture.
\newblock {\em Annals of Mathematics}, 149(2):319--420, 1999.

\bibitem{MSS}
R.~Ma{\~n}{\'e}, P.~Sad, and D.~Sullivan.
\newblock On the dynamics of rational maps.
\newblock {\em Ann. Sci. \'Ecole Norm. Sup. (4)}, 16(2):193--217, 1983.

\bibitem{McM1}
C.~McMullen.
\newblock Automorphisms of rational maps.
\newblock In {\em Holomorphic functions and moduli, {V}ol.\ {I} ({B}erkeley,
  {CA}, 1986)}, volume~10 of {\em Math. Sci. Res. Inst. Publ.}, pages 31--60.
  Springer, New York, 1988.

\bibitem{McMU}
Curtis~T. McMullen.
\newblock The {M}andelbrot set is universal.
\newblock In {\em The {M}andelbrot set, theme and variations}, volume 274 of
  {\em Lecture Notes in Math.}, pages 1--17. Cambridge Univ. Press, 2000.

\bibitem{Par}
D.~Paraschiv.
\newblock Newton-like components in the {C}hebyshev-{H}alley family of degree
  {$n$} polynomials.
\newblock {\em Mediterr. J. Math.}, 20(3):Paper No. 149, 17, 2023.

\bibitem{Ste}
N.~Steinmetz.
\newblock The formula of {R}iemann-{H}urwitz and iteration of rational
  functions.
\newblock {\em Complex Variables Theory Appl.}, 22(3-4):203--206, 1993.

\bibitem{traub1964}
J.F. Traub.
\newblock {\em Iterative methods for the solution of equations}.
\newblock Prentice-Hall, Englewood Cliffs, New Jersey, 1964.

\bibitem{Werner}
W.~Werner.
\newblock Some improvements of classical iterative methods for the solution of
  nonlinear equations.
\newblock In {\em Numerical solution of nonlinear equations ({B}remen, 1980)},
  volume 878 of {\em Lecture Notes in Math.}, pages 426--440. Springer,
  Berlin-New York, 1981.

\end{thebibliography}
\bibliographystyle{plain}

\end{document}